\documentclass[a4paper,11pt]{amsart}

\usepackage{hyperref}
\usepackage{amssymb, amsthm, amsmath, thmtools, mathtools}

\usepackage{tikz}

\usepackage{bbm}

\usepackage[utf8x]{inputenc}

\usepackage[english]{babel}
\usepackage[abbrev]{amsrefs}

\usepackage{tikz}
\usepackage{tikz-cd}
\usetikzlibrary{matrix,arrows, arrows.meta,calc}
\usepackage{pgfplots}
\pgfplotsset{width=7cm,compat=1.8}
\usepgfplotslibrary{polar}

\allowdisplaybreaks[1]

\usepackage{enumitem}
\usepackage{verbatim}

\usepackage{comment}

\excludecomment{extracalc}

\declaretheorem[name=Theorem,numberwithin=section]{theorem}
\declaretheorem[name=Remark,style=remark,sibling=theorem]{remark}
\declaretheorem[name=Lemma,sibling=theorem]{lemma}
\declaretheorem[name=Proposition,sibling=theorem]{proposition}

\declaretheorem[name=Theorem,numbered=no]{theorem*}
\declaretheorem[name=Corollary,numbered=no]{corollary*}
\declaretheorem[name=Question,numbered=no]{question*}
\declaretheorem[style=remark,name=Remark,numbered=no]{remark*}
\declaretheorem[style=definition,name=Definition,numbered=no]{definition*}
\declaretheorem[style=definition,name=Notation,numbered=no]{notation*}
\declaretheorem[name=Lemma,numbered=no]{lemma*}

\newcounter{mt}

\declaretheorem[name=Theorem,sibling=mt]{MainTheorem}
\declaretheorem[name=Corollary,sibling=mt]{MainCorollary}

\makeatletter
\newcommand{\tpitchfork}{%
	\vbox{
		\baselineskip\z@skip
		\lineskip-.52ex
		\lineskiplimit\maxdimen
		\m@th
		\ialign{##\crcr\hidewidth\smash{$-$}\hidewidth\crcr$\pitchfork$\crcr}
	}%
}
\makeatother

\BibSpec{article}{%
	+{}{\PrintAuthors}  		{author}
	+{,}{ \textit}     		{title}
	+{,}{ }             		{journal}
	+{}{ \textbf}       		{volume}
	+{}{ \parenthesize} 		{date}
	+{}{, no. } 		{number}
	+{,}{ }      	      		{conference}
	+{,}{ }      	      		{book}
	+{,}{ }            		{pages}
	+{,}{ }            	 	{note}
	+{,}{ }            	 	{status}
	+{,}{  \texttt } {eprint}
	+{.}{}              {transition}
}

\BibSpec{book}{%
	+{}{\PrintAuthors}  {author}
	+{,}{ \textit}      {title}
	+{,}{ }             {publisher}
	+{,}{ }             {place}
	+{,}{ }             {date}
	+{.}{}              {transition}
}

\BibSpec{incollection}{%
	+{}  {\PrintAuthors}                {author}
	+{,} { \textit}                     {title}
	+{.} { }                            {part}
	+{:} { \textit}                     {subtitle}
	+{,} { \PrintContributions}         {contribution}
	+{,} { \PrintConference}            {conference}
	+{,} { }                            {booktitle}
	+{,} { pp.~}                        {pages}
	+{,}{ }       		{series}
	+{}{ \textbf}       		{volume}
	+{,} { }                            {publisher}
	+{,} { }                 {date}
	+{,} { }                            {status}
	+{,} { \PrintDOI}                   {doi}
	+{,} { available at \eprint}        {eprint}
	+{}  { \parenthesize}               {language}
	+{}  { \PrintTranslation}           {translation}
	+{;} { \PrintReprint}               {reprint}
	+{.} { }                            {note}
	+{.} {}                             {transition}
}

\usepackage{xcolor}
\newcommand{\NN}{\mathbb{N}}

\newcommand{\RR}{\mathbb{R}}

\newcommand{\ZZ}{\mathbb{Z}}
\newcommand{\QQ}{\mathbb{Q}}

\newcommand{\torus}[1]{\RR^{#1}/\ZZ^{#1}}

\newcommand{\tcount}{\mathrm{t}}

\newcommand{\hyp}{\mathcal{H}}

\newcommand{\hypcells}{\hyp^{\circ}}

\newcommand{\eins}{\mathbf{1}}

\DeclareMathOperator{\relint}{relint}
\DeclareMathOperator{\cl}{cl}
\newcommand{\polys}{\mathcal{P}}
\newcommand{\pc}{\mathsf{PC}}
\DeclareMathOperator{\bd}{bd}
\DeclareMathOperator{\tangent}{Tan}
\DeclareMathOperator{\normal}{Nor}
\DeclareMathOperator{\surf}{S}

\DeclareMathOperator{\pos}{pos}
\DeclareMathOperator{\aff}{aff}
\DeclareMathOperator{\spann}{span}
\newcommand{\bdproj}{\mathcal{B}}

\DeclareMathOperator{\Proj}{p}

\newcommand{\haus}{\mathcal{H}}
\DeclareMathOperator{\hf}{h}

\newcommand{\pair}[2]{\left\langle #1, #2 \right\rangle}

\DeclareMathOperator{\vol}{vol}

\DeclareMathOperator{\Tan}{Tan}

\DeclareMathOperator{\interior}{int}

\newcommand{\K}{\mathcal{K}}

\newcommand{\functions}{\mathcal{F}}
\newcommand{\kubert}{\mathrm{K}}

\title[Counting Lattice Points in Translations]{Counting Lattice Points in Translations: Uniqueness and Existence Results}
\author{Ansgar Freyer}
\address{FU Berlin, AG Diskrete Geometrie und Topologische Kombinatorik, Arnimallee 2, 14195 Berlin}
\email{a.freyer@fu-berlin.de}
\author{Georg C. Hofst\"atter}
\address{TU Wien, Institute f. Discrete Mathematics and Geometry, 1040 Vienna, Austria}
\email{georg.hofstaetter@tuwien.ac.at}

\thanks{{\it MSC classification}: Primary:
11P21, 
52B20, 
52C07. 
Secondary:
	52B45, 
	52C35. 
	\\
	\indent{\it Keywords}: Lattice point enumerator, Convex bodies, Lattice polytopes, Toric arrangements, Kubert--Lang operators 
	\\\indent
	The research of the first-named author was supported by the SPP 2458 ``Combinatorial Synergies'', funded by the Deutsche Forschungsgemeinschaft (DFG, German Research Foundation) – 539867386. The research of the second-named author was funded in part by the Austrian Science Fund (FWF), 10.55776/ESP9378724.}

\begin{document}
	\begin{abstract}
		Generalizing a recent theorem by Higashitani et al.\ (Comb. Theory 2025), it is shown that a convex body $K$ is uniquely described up to integer translation by the function $\tcount_K(v) = |(K-v)\cap\ZZ^n|$, $v\in\RR^n$. For a lattice polytope $P$ the function $\tcount_P$ is decomposed into homogeneous components $\tcount_{P,i}$, similar to the Ehrhart polynomial. It is shown that the functions $\tcount_{P,i}$ are simultaneous eigenfunctions of a family of linear operators introduced by Kubert and Lang  in (Math.\ Z.\ 1976) and that, conversely, the linear spaces of piecewise constant eigenfunctions are generated by the respective coefficients $\tcount_{P,i}$. A structure theorem for the simultaneous eigenspace corresponding to the $(n-1)$st coefficient in the decomposition of $\tcount_P$ is obtained. This leads to an explicit description of the set of functions $\tcount_P$, where $P$ ranges over all lattice polygons, as well as to inequalities for the number of lattice points in dilated and translated lattice polytopes.
	\end{abstract}
	
	\maketitle
	

	\section{Introduction}	
	
	For a convex body $K\subset\RR^n$, $n \geq 1$, i.e., a convex compact set, we consider
	\begin{align}\label{eq:defTranslCount}
		\tcount_K : \RR^n\to\ZZ_{\geq 0},\quad\tcount_K(v) = |(K-v)\cap\ZZ^n|,
	\end{align}
which counts the number of integer points in $K-v$. We call $\tcount_K$ the \emph{translation count} of $K$. The translation count is a $\ZZ^n$-periodic function, which is why we can equivalently consider it as a function on the flat torus $\RR^n/\ZZ^n$.

Many functionals on the set of convex bodies can be expressed in a natural way through the translation count. For instance, one has
\begin{equation}\label{eq:volIsIntegral}
	\vol(K) = \int_{\RR^n/\ZZ^n} \tcount_K(x)\,\mathrm{d}x
\end{equation}
and, more generally,
\begin{equation}
\label{eq:tcount_dilation}
|mK\cap \ZZ^n| = \sum_{z\in\{0,\dots,m-1\}^{n}} \tcount_K(\tfrac 1m z),
\end{equation}
for any $m\in\ZZ_{>0}$ (see Equation~\eqref{eq:tmPEigenfct} below). This raises the question if $K$ is uniquely determined by its translation count $\tcount_K$. Here, ``uniquely determined'' has to be understood up to integer translations, since, clearly, $\tcount_{K+z} = \tcount_K$ holds for all $z\in\ZZ^n$.

It was shown by Higashitani, Murai and Yoshinaga that this is the case for rational polytopes \cite{HigashitaniEtAl} (see also \cite{Alhajjar}).
	
\begin{theorem}[Higashitani, Murai, Yoshinaga]
\label{thm:higashitani}
Let $P,Q\subset\RR^n$ be rational polytopes with $\tcount_P(v) = \tcount_Q(v)$ for all $v\in\RR^n$. Then, $P=Q-z$ for some $z\in\ZZ^n$.
\end{theorem}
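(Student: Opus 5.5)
We propose to derive the theorem from the quantities the translation count determines, namely the Ehrhart-type counts \eqref{eq:tcount_dilation} and the volumes of the sections $K\cap(\text{halfspace})$, and then use a Minkowski-type uniqueness argument. Since the abstract announces a generalization to arbitrary convex bodies, we aim for an argument that works for convex bodies $K,L$ with $\tcount_K=\tcount_L$. We want to show $K=L+z$ for some $z\in\ZZ^n$.

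\textbf{Step 1: recover the volume and the projections.} By \eqref{eq:volIsIntegral}, $\vol(K)=\vol(L)$. More generally, for a primitive integer vector $u$ and $c\in\RR$, integrating $\tcount_K$ over a hyperplane-coset of the torus should recover lattice-slab counts. The cleanest route seems to be Fourier analysis on $\torus{n}$. The Fourier coefficient at $\xi\in\ZZ^n$ is
\[
\widehat{\tcount_K}(\xi)=\int_{\torus n}\tcount_K(x)e^{-2\pi i\pair{\xi}{x}}\,\mathrm{d}x=\int_K e^{2\pi i\pair{\xi}{y}}\,\mathrm{d}y=\widehat{\eins_K}(-\xi),
\]
obtained by unfolding the sum over $\ZZ^n$. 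Hence $\tcount_K=\tcount_L$ means exactly that $\widehat{\eins_K}$ and $\widehat{\eins_L}$ agree on $\ZZ^n$. Equivalently, $\sum_{z}\eins_K(\cdot+z)=\sum_z\eins_L(\cdot+z)$ almost everywhere, and in fact everywhere by semicontinuity.

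\textbf{Step 2: reduction to a covering statement.} Set $f=\sum_z\eins_K(\cdot+z)$, which is an upper semicontinuous, integer-valued, periodic function. Consider the maximal value $M$ of $f$ and the set where it is attained. Translating, $K$ and $L$ both give the same level sets $\{f\ge j\}$. The key idea is to peel off extremal points. Take a direction $u$ generic (irrational), and let $p\in K$ be the unique point maximizing $\pair{u}{\cdot}$, with $q\in L$ defined likewise. Near $p$, the function $f$ jumps in a way that detects the "corner" of $K$ at $p$. More precisely, the local structure of $f$ at $x=-p$ in the direction $u$ encodes the tangent cone of $K$ at $p$, modulo contributions from the other translates $K-z$ that pass through $-p$.

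The difficulty is exactly these overlapping translates. To handle them, we use the genericity of $u$. The values $\pair{u}{p-z}$ are pairwise distinct for distinct lattice translates. We consider the set of points of $\torus n$ that are $u$-maximal among all translates in a neighbourhood, and try to show that the multiset of local germs $\{K-z\}$ at a point is determined by $f$. We do this by a downward induction on the "height" in direction $u$, peeling off the topmost translate. This is an inclusion–exclusion on germs of indicator functions of convex sets. The relevant fact is that a finite sum of indicator germs of convex cones with distinct apex heights determines the summands, via the valuation/polytope-algebra uniqueness for sums of indicators.

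\textbf{Step 3: conclude.} Once we know the germ of $K$ at $p$ equals the germ of $L+z_0$ at $p$ for some $z_0\in\ZZ^n$, we push this along the boundary. The set of boundary points where the germs of $K$ and $L+z_0$ coincide is open and closed in $\bd K$, so $K=L+z_0$. For rational polytopes (the statement at hand), Step 2 simplifies considerably. Germs are polyhedral cones, and $f$ is constant on the cells of a periodic hyperplane arrangement, so the peeling can be done vertex by vertex using a generic linear functional.

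I expect the \textbf{main obstacle} to be Step 2: disentangling overlapping lattice translates at a point. Concretely, this means showing that the decomposition of $f$ near a point into indicator germs of translates is unique, and making the topmost-translate induction rigorous. I would first try to do this using the unique-decomposition property of the polytope algebra, which says that indicator functions of pointed cones with distinct apices are linearly independent.
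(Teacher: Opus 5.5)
Your proposal has a genuine gap, and it sits in Step~2. The function $f=\tcount_K$ lives on $\torus{n}$. For a polytope $K$, near a point $x_0$ one has $f=\sum_{y}\eins_{x_0+\Tan(y,K)}$, with $y$ ranging over $(x_0+\ZZ^n)\cap K$. So all summands are cones with the \emph{same} apex $x_0$. The heights $\pair{u}{y}$ you want to peel by are not recorded by $f$, since $\pair{u}{\cdot}$ does not descend to the torus. There are therefore no ``distinct apices'' to which a linear-independence statement could be applied.

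Sums of cone indicators with a common apex do not determine their summands, not even the pointed ones. For the unit square all four vertices lie over $0$. With $Q_1,\dots,Q_4$ the closed quadrants,
\[
\eins_{Q_1}+\eins_{Q_2}+\eins_{Q_3}+\eins_{Q_4}=\eins_{\RR^2}+\eins_{\RR e_1}+\eins_{\RR e_2}+\eins_{\{0\}}.
\]
Genericity of $u$ does not rescue this. For $K=[0,1]$ and $u=1$, the germ of $f$ at the image of the top vertex is $\eins_{(-\infty,0]}+\eins_{[0,\infty)}=\eins_{\RR}+\eins_{\{0\}}$. It cannot tell whether the top vertex has tangent cone $(-\infty,0]$ or $\{0\}$; only global information decides that. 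The rational case you call simpler contains lattice polytopes, which are the extreme case: every vertex cone sits over the origin of the torus.

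Step~3 inherits the problem. The set where the germs of $K$ and $L+z_0$ agree is open in $\bd K$, but it is not closed (two different polytopes can share a facet). Propagating along the boundary therefore needs exactly the local uniqueness that Step~2 does not supply.

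Step~1 also contains a false claim. Equality of Fourier coefficients is equality almost everywhere, and semicontinuity does not upgrade it to equality everywhere. The translation counts of $[0,1]\times[0,2]$ and $[0,2]\times[0,1]$ agree on $(\RR\setminus\ZZ)^2$, yet at $(0,\tfrac12)$ they take the values $4$ and $3$. So the theorem fails under a.e.\ equality, and any proof must use the values on the null set where boundaries of translates pass through lattice points.

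The paper obtains the statement as a special case of Theorem~\ref{mthm:inj}, and it uses that null-set information only through jumps:
\begin{itemize}
\item By Lemma~\ref{lemma:tangentcones}, $\Delta_v\tcount_K(x)$ counts the lattice points of $K-x$ whose tangent cone misses $v$.
\item At generic points of $\aff(F)$ this isolates a facet $F$ (Lemmas~\ref{lemma:FacetImpliesJumps} and~\ref{lemma:JumpImpliesFacet}).
\item In the plane this yields the edge normals and lengths, hence the surface area measure, and Minkowski's uniqueness gives $K=M+v$.
\item In higher dimensions, Lemma~\ref{lemma:proj} expresses the translation count of the projection along a primitive $z\in\ZZ^n$ through jumps. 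By induction all such projections agree up to lattice translation.
\item Lemma~\ref{lemma:linear} then makes $\hf_K-\hf_M$ linear, and coordinate projections force $v\in\ZZ^n$.
\end{itemize}
At no point does the paper need to disentangle overlapping germs.
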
	

In \cite{HigashitaniEtAl}, Higashitani et al.\ state their theorem under the seemingly stronger assumption that
$\tcount_{mP}(v) = \tcount_{mQ}(v)$
holds for all $m\in\ZZ_{\geq 0}$ and $v\in\RR^n$. In view of \eqref{eq:tcount_dilation}, this is equivalent to the formulation above. However, considering real, or rational, dilates changes the picture drastically. Royer considered the question whether a convex body $K\subset\RR^n$ is determined completely (not only up to integer translation) by the function
\begin{equation}
\label{eq:royer}
\RR_{>0} \times \RR^n\to\ZZ_{\geq 0},\quad (\lambda,v)\mapsto |\lambda (K -v)\cap\ZZ^n|. 
\end{equation}
Royer answers his question affirmatively for rational polytopes \cite{Royer_rational} and centrally symmetric convex bodies \cite{Royer_cs}. Recently, Rocha-Neves gave an affirmative answer to Royer's question for arbitrary convex bodies \cite{Rocha}. Another remarkable result in this context was obtained by Robins~\cite{Robins2026} who showed (in particular) that for rational polytopes it suffices to consider homothetic copies of the form $\lambda(P-v_0)$, where $\lambda\in\QQ_{>0}$ and $v_0 = (2^{1/(d+1)}, \dots, 2^{d/(d+1)})$.

Our first main result is the generalization of Theorem \ref{thm:higashitani} to convex bodies.

	\begin{MainTheorem}
		\label{mthm:inj}
		Let $K,L\subset\RR^n$ be convex bodies with $\tcount_K(v) = \tcount_L(v)$ for all $v\in\RR^n$. Then, $K=L-z$ for some $z\in\ZZ^n$.
	\end{MainTheorem}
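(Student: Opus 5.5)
The plan is to read off from $\tcount_K$ the periodic family of boundaries $\bd K+\ZZ^n$, together with the information on which side of each boundary piece the body lies, and then to reconstruct a single translate $K+z$ from this data using convexity. It is convenient to work with $f=\eins_K-\eins_L$. The hypothesis $\tcount_K=\tcount_L$ says exactly that the $\ZZ^n$-periodization of $f$ vanishes:
\[
\sum_{z\in\ZZ^n} f(x+z)=0 \quad\text{for all } x\in\RR^n .
\]
Passing to $mK$ via the identity $\tcount_{mK}(v)=\sum_{z\in\{0,\dots,m-1\}^n}\tcount_K(\tfrac1m(v+z))$ (a variant of \eqref{eq:tcount_dilation}) gives no new information, so the argument has to be geometric rather than Fourier-analytic. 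The Fourier coefficients $\widehat{\eins_K}(\xi)$, $\xi\in\ZZ^n$, carry exactly the same information as $\tcount_K$.

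\emph{Step 1 (discontinuity set).} The function $\tcount_K=\sum_z \eins_{K+z}(\cdot)$, read as a function of $x$ via the reflection $v\mapsto -v$, is locally finite. It is discontinuous at $x$ only if $x\in \bd K+z$ for some $z$. Conversely, at points of $\bd K+z$ that lie on no other translate $\bd K+z'$ and at which $\bd K$ has a unique outer normal, $\tcount_K$ jumps by exactly $+1$ when crossing into $K+z$. I will argue that such points are dense in $\bd K+\ZZ^n$. Two distinct translates $\bd K+z$ and $\bd K+z'$ meet only in a set of $(n-1)$-dimensional measure zero, unless they share an open boundary patch. A shared patch forces a parallel facet pair with $z-z'$ parallel to it, and that case can be handled by restricting to the remaining boundary. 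Regular points have full measure by Alexandrov/Rademacher. Hence
\[
\bd K+\ZZ^n=\cl\{\text{discontinuities of }\tcount_K\}=\bd L+\ZZ^n,
\]
and at a dense set of points the side on which the count increases gives the inner normal. Thus $\tcount_K=\tcount_L$ yields $W:=\bd K+\ZZ^n=\bd L+\ZZ^n$, with matching co-orientations on a dense subset.

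\emph{Step 2 (covering argument).} We have $\bd L\subset \bigcup_{z}(\bd K+z)$, and only finitely many $z$ are relevant because $L$ is bounded. By Baire's theorem, or simply by finiteness and $\haus^{n-1}$-measure, some single translate $\bd K+z_0$ contains a relatively open piece $U$ of $\bd L$ consisting of regular points. Replacing $K$ by $K+z_0$, we may assume that $\bd K$ and $\bd L$ share the patch $U$. By the co-orientation from Step 1, $K$ and $L$ lie on the same side of $U$.

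\emph{Step 3 (propagation, the main obstacle).} The task is to propagate from $U$ to $\bd K=\bd L$. I would consider the set $A\subset\bd L$ of points having a neighbourhood in $\bd L$ that is contained in $\bd K$. This set is relatively open and nonempty. The aim is to show that it is closed in the sphere $\bd L$, which is connected for $n\ge2$; the case $n=1$ is trivial.

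At a boundary point $p$ of $A$, the surface $\bd L$ near $p$ is covered by finitely many translates $\bd K+z$, and $\bd K$ itself passes through $p$. The difficulty is that $\bd L$ might leave $\bd K$ at $p$ and continue along another translate $\bd K+z$, $z\ne0$, for instance at an edge of a polytope. I would exclude this with a volume/count comparison localized near $p$. Take small balls $B$ around $p$ and use the vanishing periodization of $f$ on $B$. If $\bd L$ switches from $\bd K$ to $\bd K+z$ at $p$, then near $p$ the set $L\setminus K$ or $K\setminus L$ contains a wedge. Its contribution to the periodized sum at generic points of $B$ must be cancelled by $f(\cdot+z')$ for some $z'\neq0$. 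I would then use the convexity of both bodies in a suitably generic direction $u$ (with $\langle u,z\rangle$ pairwise distinct for distinct $z$) to show that the cancelling translates would have to protrude beyond the supporting hyperplane of $K$ or $L$ in direction $u$. This is the step I expect to require the most care, especially at non-smooth points and when translates of $\bd K$ overlap in open patches.

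Once $A=\bd L$ is established, we get $\bd L\subseteq\bd K$. Two convex bodies with nested boundaries coincide, so $L=K+z_0$ for the $z_0$ chosen in Step 2. Since $K$ was replaced by $K+z_0$ there, this is $K=L-z_0$, which is the statement of Theorem~\ref{mthm:inj} with $z=z_0$.
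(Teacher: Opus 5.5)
Step 3 is the whole difficulty, and you leave it as a plan. Worse, the plan cannot succeed as sketched. Apart from locating $W$, you never use the values of $\tcount_K$ on $W$. The input to Step 3 is $W$, co-orientations where they can be read off, and the vanishing of $\sum_z f(x+z)$ at generic points of a ball $B$.

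The rectangles $K=[0,1]\times[0,2]$ and $L=[0,2]\times[0,1]$ from the introduction share all of this data:
\begin{itemize}
\item both have $\tcount=2$ off the grid $W=(\ZZ\times\RR)\cup(\RR\times\ZZ)$, and this grid is the discontinuity set of both;
\item $\bd L$ contains the patch $U=\{0\}\times(0,1)$ of $\bd K$, with both bodies on the same side;
\item at $p=(0,1)$, $\bd L$ leaves $\bd K$ and continues along $\bd K-e_2$, and the wedge of $K\setminus L$ above $p$ is cancelled at generic points by $f(\cdot+(1,-1))=-1$.
\end{itemize}
So no convexity argument on generic points of $B$ can produce a contradiction here. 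Yet $L$ is not an integer translate of $K$; the two are distinguished only on $W$, e.g.\ $\tcount_K(0,\tfrac12)=4\neq 3=\tcount_L(0,\tfrac12)$.

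For the same reason, your remark that the Fourier coefficients $\widehat{\eins_K}(\xi)$ carry the same information as $\tcount_K$ is wrong. They determine $\tcount_K$ only almost everywhere, which is exactly what is insufficient.

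The example also breaks Step 1. Every point of $\bd K$ lies on two translates: the left edge of $K$ is the right edge of $K-e_1$, and the bottom edge is the top edge of $K-2e_2$. So points lying on a single translate are not dense. Your classification of shared patches misses this case: opposite parallel facets $K_u$, $K_{-u}$ with $z-z'$ transversal to them. There $\tcount_K$ takes the same value on both sides, no co-orientation can be read off, and there is no remaining boundary to restrict to.

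The missing ingredient is to use the values on $W$ through the one-sided jumps $\Delta_v\tcount_K(x)$ at points $x\in W$. By \autoref{lemma:tangentcones}, these count the lattice points of $K-x$ whose tangent cone does not contain $v$.
\begin{itemize}
\item They detect facet normals and facet translation counts (\autoref{lemma:FacetImpliesJumps}, \autoref{lemma:JumpImpliesFacet}). In the example, $\Delta_{e_1}\tcount_K(0,\tfrac12)=2$ but $\Delta_{e_1}\tcount_L(0,\tfrac12)=1$.
\item They give the translation counts of the projections $\Proj_z K$ along primitive $z\in\ZZ^n$ (\autoref{lemma:proj}).
\end{itemize}
The paper then avoids propagating a shared patch altogether. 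In the plane it matches edges (normal, image on the torus, length) and strictly convex arcs of $K$ and $L$. This gives equal surface area measures, hence $L=K+v$ by the uniqueness part of Minkowski's theorem, and $v\in\ZZ^2$ follows from the one-dimensional projections. For $n\geq 3$ it inducts on dimension via \autoref{lemma:proj}: $\hf_K-\hf_L$ is linear on $z^\perp$ for every nonzero integer $z$, hence linear by \autoref{lemma:linear}. A local propagation argument like yours would have to be rebuilt around such boundary values rather than generic points of $B$.
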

	
From the perspective of Fourier analysis, it is tempting to ask whether pointwise equality of the translation counts is necessary in Theorem \ref{mthm:inj}, or if the assumption can be weakened to $\tcount_K(v) = \tcount_L(v)$ for almost all $v\in\RR^n$. This is not the case: It is easy to verify that the translation counts of the rectangles $K = [0,1]\times[0,2]$ and $L= [0,2]\times[0,1]$ agree on $(\RR\setminus\ZZ)^2$, while $K$ cannot be obtained from $L$ via an integer translation. A necessary and sufficient criterion for two  polytopes $P,Q\subset\RR^n$ to satisfy $\tcount_P = \tcount_Q$ almost everywhere is implicit in the article~\cite{LevLiu} by Lev and Liu. It is obtained by observing that $\tcount_P = \tcount_Q$ holds almost everywhere, if and only if $\tcount_P-\tcount_Q$ is a ``tiling at level zero'' and then combining \cite[Theorem 1.2]{LevLiu} and \cite[Proposition 3.3]{LevLiu}. We state the theorem here to complete the picture and refer to \cite{LevLiu} for a definition of the Hadwiger functionals.

\begin{theorem}[Lev, Liu]
\label{thm:levliu}
Let $P,Q\subset\RR^n$ be unions of polytopes. Then, $\tcount_P = \tcount_Q$ holds almost everywhere if and only if the \emph{Hadwiger functionals} $H_\Phi$ agree for $P$ and $Q$ as $\Phi$ ranges over all (oriented) flags of affine subspaces of $\RR^n$.
\end{theorem}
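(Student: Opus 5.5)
The plan is to translate the statement into a Fourier-analytic one and then quote, or reprove, the two ingredients from \cite{LevLiu} cited above. Put $f = \eins_P - \eins_Q$, viewed as a compactly supported function on $\RR^n$ (for unions of polytopes, use the indicator of the union, or the sum of indicators if the pieces overlap). Then $\tcount_P - \tcount_Q = \sum_{z\in\ZZ^n} f(\cdot + z)$ is the $\ZZ^n$-periodization of $f$, i.e.\ of the reflected $\check f(x)=f(-x)$ after the substitution $v\mapsto -v$.

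\textbf{Step 1 (reduction to lattice frequencies).} This periodization lies in $L^1(\RR^n/\ZZ^n)$, and its Fourier coefficients are $\hat f(\xi)$, $\xi\in\ZZ^n$, up to conjugation. Hence $\tcount_P=\tcount_Q$ almost everywhere if and only if $\hat{\eins}_P(\xi)=\hat{\eins}_Q(\xi)$ for all $\xi\in\ZZ^n$, the case $\xi = 0$ being $\vol(P)=\vol(Q)$. This is exactly the statement that $f$ tiles at level zero with the lattice $\ZZ^n$, and it corresponds to \cite[Proposition 3.3]{LevLiu}.

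\textbf{Step 2 (the spectral condition in terms of Hadwiger functionals).} We need $\hat f|_{\ZZ^n}\equiv 0$ if and only if $H_\Phi(P)=H_\Phi(Q)$ for all oriented flags $\Phi$. This is \cite[Theorem 1.2]{LevLiu}. To reprove it, I would compute $\hat{\eins}_P$ by iterating the divergence theorem. Integrating $e^{-2\pi i\pair{x}{\xi}}$ over $P$ becomes a boundary sum over facets weighted by $\pair{u_F}{\xi}/\pair{\xi}{\xi}$, and repeating this inside the facets leads down to vertices. For $\xi$ in a fixed linear subspace direction, the result is an expansion indexed by flags $F_0\subset F_1\subset\dots$ of faces. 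Each term is an exponential $e^{-2\pi i\pair{a}{\xi}}$, with $a$ in the smallest face of the flag, times a rational function of $\xi$.

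Grouping the faces according to the oriented flags of affine subspaces they span, the coefficients that survive are the signed sums of lower-dimensional volumes of faces lying in a given affine flag. These sums are precisely the Hadwiger functionals $H_\Phi$. Since all quantities involved are valuations, additivity lets everything extend to unions of polytopes. Sufficiency then follows: if all $H_\Phi$ coincide, every term in the expansion of $\hat f$ cancels, even at non-generic lattice frequencies where denominators vanish. To handle those frequencies, one treats $\xi$ restricted to each lattice subspace separately.

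\textbf{Main obstacle.} The hard direction is necessity: recovering every $H_\Phi$ from the values of $\hat f$ only on $\ZZ^n$. I would induct on the flag length. Fix a rational direction, that is, a lattice subspace $W$, and look at $\xi\in W\cap\ZZ^n$ going to infinity along a ray. The leading order in $|\xi|$ isolates flags whose top spaces are orthogonal to $W$. Within one order, the remaining terms are a finite exponential sum $\sum_a c_a e^{-2\pi i\pair{a}{\xi}}$ evaluated on a lattice. A Kronecker/equidistribution (Weyl) argument then shows that if such a sum vanishes on all lattice points, the coefficients attached to each class of $a$ modulo the dual lattice vanish.

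Peeling off orders in this way should yield the vanishing of $H_\Phi(P)-H_\Phi(Q)$ for flags adapted to rational directions. Irrational flags contribute nothing, because the corresponding Fourier mass never meets $\ZZ^n$. This last point must be matched with the definition in \cite{LevLiu}, where one should check that the relevant functionals vanish automatically for irrational flags. Making this order-by-order separation rigorous, with denominators that are only rational functions and may degenerate on sublattices, is where I expect the real work to lie.
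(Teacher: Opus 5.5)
\textbf{Verdict.} Your primary route (reduce to tiling at level zero, then quote Lev--Liu) is the paper's own, but your optional reproof in Step 2 contains a genuine error about irrational flags.

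\textbf{Comparison with the paper.} The paper gives no independent proof. It only observes that $\tcount_P(v)=\sum_{z\in\ZZ^n}\eins_P(v+z)$, so $\tcount_P=\tcount_Q$ almost everywhere says exactly that $\eins_P-\eins_Q$ tiles at level zero with $\ZZ^n$. It then combines \cite[Theorem 1.2]{LevLiu} with \cite[Proposition 3.3]{LevLiu}. Your Step 1 is a correct, equivalent Fourier reformulation, and no reflection is needed. However, the paper does not say which of the two cited results plays which role. Your identification of Proposition 3.3 with the Poisson-summation step is therefore a guess, so quote the combination. A small slip: for a union $P$ you must periodize the indicator of the set $P$ itself. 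A sum of indicators of overlapping pieces is a different function with different Hadwiger data.

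\textbf{The error in the reproof.} It sits exactly where you were unsure: irrational flags neither ``contribute nothing'' nor have automatically vanishing functionals. In $\RR^2$ take $T=\mathrm{conv}\{0,e_1,(\alpha,1)\}$ and $T'=\mathrm{conv}\{0,e_1,(\alpha',1)\}$, with $\alpha,\alpha'$ irrational and $\alpha'\notin\{\alpha,\alpha\pm1\}$.
\begin{itemize}
\item They have equal area and identical data on every flag built from rational subspaces, since their only rational edge, $[0,e_1]$, is shared.
\item For $\Phi$ the oriented line $\RR(\alpha,1)$, only $T$ has an edge in $\Phi+\ZZ^2$, so $H_\Phi(T)\neq 0=H_\Phi(T')$.
\item Indeed $\tcount_T\neq\tcount_{T'}$ on a set of positive measure. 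In the distributional gradient of $\tcount_T-\tcount_{T'}$, the part carried by the dense curve $\pi(\RR(\alpha,1))$ comes from the edge $[0,(\alpha,1)]$ alone, so nothing cancels it.
\end{itemize}
So the necessity direction must extract irrational-flag functionals from $\hat f|_{\ZZ^n}=0$. These functionals do enter $\hat f(\xi)$ at lattice frequencies: the iterated divergence expansion passes through irrational faces down to faces whose direction is orthogonal to $\xi$, for example vertices. But they never appear as leading terms. Your ray asymptotics only isolate flags whose terminal face is orthogonal to a nonzero lattice vector, and a facet with irrational normal never is.

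\textbf{A possible fix.} You need an additional mechanism, and a physical-space induction treats all flags uniformly.
\begin{itemize}
\item If the periodization of $f$ vanishes almost everywhere, so does its distributional gradient, which is a finite measure on $\torus{n}$.
\item Restrict this measure to the image of each affine hyperplane class. This shows that the signed facet function in that hyperplane tiles at level zero with respect to the subgroup of $\ZZ^n$ parallel to the hyperplane. That subgroup may have lower rank, or be trivial.
\item Formulate the induction for discrete translation groups of arbitrary rank. This handles rational and irrational flags alike, and running the same steps in reverse gives sufficiency.
\end{itemize}
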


Next, we turn to the question which functions on $\torus{n}$ can be expressed as a translation count. 
In this paper we consider the problem for {lattice polytopes}. We write $\polys(\ZZ^n)$ for the set of lattice polytopes and $\functions(\torus{n})$ for the set of functions on the torus. In the setting of lattice polytopes, the translation count admits a homogeneous decomposition: For any $P\in\polys(\ZZ^n)$ there exist functions $\tcount_{P,i}\in\functions(\torus{n})$ such that for any $m\in\ZZ_{\geq 0}$ we have
\begin{align}\label{eq:homDecompTcount}
		\tcount_{mP} = \sum_{i=0}^{n} m^i\,\tcount_{P,i}.
\end{align}
This formular is a direct consequence of McMullen's homogeneous decomposition theorem for translation invariant valuations of lattice polytopes \cite{McMullen1978}, see also \cite{Alhajjar, HigashitaniEtAl}. We call $\tcount_{P,i}$ the $i$th \emph{Ehrhart translation coefficient}. Indeed, the numbers $\tcount_{P,i}(0)$ are the classical Ehrhart coefficients and some properties of the Ehrhart coefficients generalize: Comparing coefficients it is easy to see that $\tcount_{mP,i} = m^i \tcount_{P,i}$ and, for any $v\in\RR^n$,
\begin{equation}\label{eq:extr_degrees}
\tcount_{P,n}(v) = \vol(P),\quad\tcount_{P,0}(v) = \eins_{\ZZ^n}(v).
\end{equation}
The first equation follows from the definition of the Riemann integral, while the second equation is obtained by setting $m=0$ in \eqref{eq:homDecompTcount}.

In a similar spirit to \eqref{eq:tcount_dilation}, one finds that for any $P\in\polys(\ZZ^n)$, the $i$th Ehrhart translation coefficient $\tcount_{P,i}$ satisfies the functional equations
\begin{equation}\label{eq:funct_eq}
m^i f(v) = \sum_{u\in\torus{n}\,:\, mu=v} f(u),
\end{equation}
for all $m\in\ZZ_{>0}$, see Proposition \ref{prop:tpiEigenfct}. Note that the range of $u$ in the above sum can be represented in $\RR^n$ by the vectors $\tfrac 1m (v+z)$, $z\in\{0,\dots,m-1\}^n$, where we identify $v$ with one of its representatives.

It is instructive to interpret the functional equations from \eqref{eq:funct_eq} as a simultaneous eigenfunction condition for the operators
\[
\kubert_m \colon \functions(\torus{n})\to\functions(\torus{n}),\quad \kubert_m f(v) = \tfrac 1{m^n} \sum_{u\colon mu=v} f(u).
\] 
To the best of our knowledge, operators of this form were first considered by Kubert and Lang in the context of toroidal distributions and modular units \cite{Kubert, KubertLang76,KubertLang81}, see also \cite{Lagarias, Milnor}. We refer to $\kubert_m$ as the \emph{Kubert--Lang operators}.

We will see that the functions $\tcount_{P,i}$ generate a simultaneous eigenspace of the Kubert--Lang operators restricted to the vector space of piecewise constant functions on some toric arrangement. In fact, a slightly refined statement holds true. To phrase it, consider a periodic hyperplane arrangement $\hyp$ given by primitive lattice vectors $a_1,\dots,a_N\in(\ZZ^n)^\star$. That is, the hyperplanes of $\hyp$ are $\{x\in\RR^n : \langle a_i,x\rangle = b\}$, where $i\in [N]$, $b\in\ZZ$. After projecting on the flat torus, $\hyp$ is in 1-1 correspondence to a (central) toric arrangement, which we shall also denote by $\hyp$. The set of relatively open cells of the toric arrangement is denoted by $\hypcells$. We write $\pc(\hyp)$ for the vector subspace of $\functions(\torus{n})$ of functions that are constant on the cells in $\hypcells$. To a lattice polytope $P\in\polys(\ZZ^n)$ we associate a toric arrangement $\hyp_P$ via its primitive facet normals and we write $\polys(\ZZ^n;\hyp)$ for the set of lattice polytopes $P$ with $\hyp_P=\hyp$. 
The theory of toric arrangements has further interesting connections to lattice polytopes, in particular to the Ehrhart theory of zonotopes, see, for instance \cite{DadderioMoci, Moci, Procesi}.

It was already observed by Higashitani et al.\  in \cite{HigashitaniEtAl}*{Thm.~1.1} that $\tcount_{P,i}\in\pc(\hyp)$ for any $P\in\polys(\ZZ^n;\hyp)$.
We write $\pc(\torus{n}) = \bigcup_{\hyp}\pc(\hyp)$ for the vector space of piecewise constant functions (with respect to some toric arrangement).
	
	\begin{MainTheorem}
		\label{mthm:eihyp}
		For any $i\in\{0,...,n\}$,
		\begin{equation}
		\label{eq:joint_eigenspace_hyp}
			\spann\{\tcount_{P,i} : P\in\polys(\ZZ^n;\hyp)\} = \bigcap_{m \in \ZZ_{>0}}\left\{f\in\pc(\hyp) : \kubert_{m}f = m^{n-i}f\right\}.
		\end{equation}
		In particular,
		\begin{equation}
		\label{eq:joint_eigenspace}
		\spann\{\tcount_{P,i} : P\in\polys(\ZZ^n)\} = \bigcap_{m \in \ZZ_{>0}}\left\{f\in\pc(\torus{n}) : \kubert_{m}f = m^{n-i}f\right\}.
		\end{equation}
	\end{MainTheorem}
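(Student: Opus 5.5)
The inclusion ``$\subseteq$'' is immediate from the facts quoted before the statement. By \cite{HigashitaniEtAl}*{Thm.~1.1}, $\tcount_{P,i}\in\pc(\hyp)$ for every $P\in\polys(\ZZ^n;\hyp)$. By the functional equations \eqref{eq:funct_eq} (Proposition~\ref{prop:tpiEigenfct}), $\kubert_m\tcount_{P,i}=m^{i-n}\tcount_{P,i}$. Thus the eigenvalue in \eqref{eq:joint_eigenspace_hyp} must be read with the normalisation of $\kubert_m$ as in \eqref{eq:funct_eq}; I will work with $f\mapsto\sum_{mu=v}f(u)$, whose eigenvalue is $m^i$. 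The second identity \eqref{eq:joint_eigenspace} follows from the first by taking the union over all $\hyp$. This needs two observations. First, every $P$ lies in some $\polys(\ZZ^n;\hyp)$. Second, any $f\in\pc(\torus n)$ lies in $\pc(\hyp)$ for some $\hyp$ that is realized as $\hyp_P$: given a toric arrangement, add normals so that it becomes the normal fan of a full-dimensional lattice polytope, for instance a suitable Minkowski sum or a lattice dilate of a polytope with prescribed normals. The spaces $\pc(\hyp)$ are nested under refinement. So the real content is ``$\supseteq$'' in \eqref{eq:joint_eigenspace_hyp}.

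For ``$\supseteq$'' I plan a dimension count. The operators $\kubert_m$ preserve $\pc(\hyp)$, because for a central toric arrangement the preimage of a cell under $u\mapsto mu$ is a union of cells of the same arrangement (the hyperplanes $\langle a_j,x\rangle\in\frac1m\ZZ$ refine, but summing over all $m$-division points of $v$ gives a function depending only on the cell of $v$). $\pc(\hyp)$ is finite dimensional, and the $\kubert_m$ commute ($\kubert_m\kubert_{m'}=\kubert_{mm'}$). I decompose $\pc(\hyp)$ along the face poset. For each cell $C\in\hypcells$ of dimension $d$, consider the indicator-type functions. Heuristically, a function supported near a $d$-dimensional flat scales like $m^{n-d}$ under $\sum_{mu=v}$. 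After triangularizing with respect to the partial order on flats, the joint eigenvalue $m^i$ should have multiplicity equal to the number of ``$i$-dimensional' data. Concretely, I expect
\[
\dim\bigcap_m\ker(\kubert_m-m^i)=\sum_{F}\dim(\text{space of }(i)\text{-graded data on cells of }F),
\]
which is a count of $i$-dimensional cells modulo relations coming from the torsion of the arrangement.

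On the polytope side, I want to show the $\tcount_{P,i}$ span a space of at least that dimension. The $\tcount_{P,i}$ are valuations in $P$. For $P$ ranging over Minkowski combinations $\sum\lambda_j P_j$ with fixed normal fan, McMullen's mixed decomposition makes $\tcount_{\sum\lambda_jP_j,i}$ a polynomial in $\lambda$. Its coefficients are mixed translation coefficients, and these lie in the span by polarization. Restricted to a $i$-dimensional cell $C$, a face $F$ of $P$ parallel to $C$ contributes, via local formulas (a McMullen/Berline--Vergne-type expression $\tcount_{P,i}(v)=\sum_{\dim F=i}\mathrm{vol}(F)\,\mu_F(v)$, with $\mu$ depending only on the normal cone and the torus position of $v$ relative to $\aff F$), the term $\mathrm{vol}(F)\,\mu_F(v)$. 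Varying the face volumes independently, which is possible in the type cone of the fan after refining, separates the contributions of distinct faces. The functions $\mu_F(\cdot)$ for different translates then have to span the local piece of the eigenspace.

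The main obstacle is exactly this matching of dimensions: proving that the local functions $\mu_F$ generate every joint eigenfunction supported on the corresponding strata. I would attack it by induction on $n$ via restriction to flats. A joint eigenfunction with eigenvalue $m^i$ is determined by its values on the union of $i$-dimensional flats, since the lower-order parts are forced by the eigen equations through Möbius-type inversion on the flat poset. The top case $i=n$ is trivial (constants, with $\tcount_{P,n}=\vol(P)$), and $i=0$ gives $\eins_{\ZZ^n}$, consistent with \eqref{eq:extr_degrees}. For the inductive step I project along an $i$-dimensional flat and use the lower-dimensional statement in the quotient lattice, building the required polytopes as prisms or pyramids over faces.
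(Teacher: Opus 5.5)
\textbf{There is a genuine gap: the hard inclusion is not proved.}

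Three parts of your proposal are fine: the easy inclusion, the passage from \eqref{eq:joint_eigenspace_hyp} to \eqref{eq:joint_eigenspace}, and your reading of the normalisation. With the normalised $\kubert_m$ the eigenvalue is $m^{i-n}$, as in the definition of $E_i(\hyp)$ in Section~\ref{sec:kubert_lang}.

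The hard inclusion is that every joint eigenfunction in $\pc(\hyp)$ lies in the span of the $\tcount_{P,i}$. Your argument for it is a plan, not a proof:
\begin{itemize}
\item The dimension formula for the eigenspace is only ``expected'' and is not stated precisely.
\item The claim that the local functions $\mu_F$ span the local pieces of the eigenspace is explicitly left open as ``the main obstacle''.
\item The assertions that an eigenfunction is determined by its values on the $i$-dimensional flats, and that an induction via prisms or pyramids closes the gap, are unsupported.
\item The scaling heuristic is off. The operator $f\mapsto\sum_{mu=v}f(u)$ applied to the indicator of a $d$-dimensional cell grows like $m^{d}$, not $m^{n-d}$.
\end{itemize}
Carrying this programme out (local Euler--Maclaurin formulas for shifted lattices, type-cone arguments, torsion bookkeeping) would be much harder than the statement requires.

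\textbf{The missing idea.} No dimension count is needed, because the unnormalised operator applied to a cell indicator is itself a translation count. If $\widetilde C$ is a lift of $C\in\hypcells$, then
\[
\sum_{u\,:\,mu=v}\eins_C(u)=|(m\widetilde C-v)\cap\ZZ^n|=\tcount_{m\widetilde C}(v)
\]
by Lemma~\ref{lemma:cells}. The argument then runs as follows.
\begin{itemize}
\item Let $P_C$ be the closure of $\widetilde C$, and choose $D$ so that $DP_C$ is a lattice polytope for every $C$.
\item Inclusion--exclusion over faces, together with McMullen's reciprocity theorem (Lemma~\ref{lem:tCountOnRelint}), gives
\[
\tcount_{\relint(mDP_C)}=(-1)^{\dim C}\sum_{j=0}^n(-m)^j\tcount_{-DP_C,j}.
\]
This is a polynomial in $m$ whose coefficients are Ehrhart translation coefficients.
\item Now let $f=\sum_C n_C\eins_C$ satisfy $\sum_{u\,:\,mDu=v}f(u)=(mD)^if(v)$ for all $m\in\ZZ_{>0}$. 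Then
\[
(mD)^i f=\sum_{C\in\hypcells}(-1)^{\dim C}n_C\sum_{j=0}^n(-m)^j\,\tcount_{-DP_C,j}\qquad\text{for all } m\in\ZZ_{>0}.
\]
\item Both sides are polynomials in $m$ agreeing for infinitely many $m$. Comparing coefficients of $m^i$ expresses $f$ explicitly as a linear combination of the functions $\tcount_{-DP_C,i}$.
\end{itemize}
This polynomial comparison is the paper's entire proof of the hard inclusion. It replaces your second and third paragraphs.
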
	

We denote the simultaneous eigenspaces in Equations \eqref{eq:joint_eigenspace_hyp} and \eqref{eq:joint_eigenspace} by $E_i(\hyp)$ and $E_i(\torus{n})$, respectively.
One inclusion in \eqref{eq:joint_eigenspace} readily follows from \eqref{eq:funct_eq}. The other inclusion will be derived with the help of McMullen's reciprocity theorem. Theorem \ref{mthm:eihyp} is very specific to piecewise constant functions. For more general simultaneous eigenfunctions of the Kubert--Lang operators, see \cite{Milnor,Koblitz}.

Our next main theorem concerns a description of the space $E_{n-1}(\hyp)$ in terms of local data. To state it we first define the \emph{jump} of a function $f\in\pc(\hyp)$ at a point $x\in\torus{n}$ in direction $u\in\RR^n$ by
\begin{equation}
\label{eq:jump_def}
\Delta_uf(x) = \lim_{\varepsilon\searrow 0} f(x) - f(x+\varepsilon u).
\end{equation}
Let us point out, that since $f$ is piecewise constant, the difference is constant for $\varepsilon > 0$ small enough. In particular, the limit always exists.

Moreover, for a primitive lattice vector $a\in(\ZZ^n)^\star$ we write $T_a$ for the $(n-1)$-dimensional torus $\{x\in\torus{n} : \langle a,x\rangle \in \ZZ\}$. From now on it will be beneficial to assume that the description of $\hyp$ in terms of its primitive normal vectors contains the normals in both directions. That is, we describe an arrangement $\hyp$ of $N$ tori with $2N$ primitive lattice vectors $a_1,\dots,a_{2N}\in(\ZZ^n)^\star$ such that for all $i\in [2N]$ there exists a $j\in [2N]$ with $-a_i = a_j$. 

We can now state our main theorem on the structure of $E_{n-1}(\hyp)$.
\begin{MainTheorem}\label{mthm:descrTdm1}
		Let $\hyp$ be a central toric arrangement with primitive normals $a_1,\dots,a_{2N}\in(\ZZ^n)^\star$ (both signs included) and let $f\in E_{n-1}(\hyp)$. Fix $v_i\in\RR^n$ so that $\langle a_i, v_i\rangle > 0$ and let $\delta_i = \int_{T_{a_i}} \Delta_{v_i} f(y)\,\mathrm d_{T_{a_i}} y$, where the integration is with respect to the uniform probability measure on $T_{a_i}$. Then the following linear conditions hold:
		\begin{enumerate}[label=(\Alph*), ref=(\Alph*)]
			\setcounter{enumi}{9}
			\item\label{it:lin2} For every $x\in\torus{n}$ and $v\in\RR^n$ we have
			\begin{align}
				\Delta_v f(x) = \sum_{i : \pair{a_i}{x} \in\ZZ,~ \pair{a_i}{v} > 0} \delta_i.
			\end{align}
			\setcounter{enumi}{1}
			\item\label{it:lin_balance} $\sum_{i=1}^{2N} \delta_i a_i = 0$.
			\setcounter{enumi}{8 }
			\item\label{it:lin3} $\int_{\torus{n}} f(x)\,\mathrm d x = 0$.
		\end{enumerate}
	\end{MainTheorem}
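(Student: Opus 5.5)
The plan is to prove the three conditions in the order \ref{it:lin3}, \ref{it:lin2}, \ref{it:lin_balance}. The only input is the functional equation \eqref{eq:funct_eq} for $i=n-1$,
\[
\sum_{u\,:\,mu=x} f(u) = m^{n-1} f(x)\qquad (x\in\torus{n},\ m\in\ZZ_{>0}),
\]
together with the fact that $f\in\pc(\hyp)$.

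\emph{Condition \ref{it:lin3}.} The map $u\mapsto mu$ is $m^n$-to-one and preserves Haar measure, so $\int \sum_{mu=x} f(u)\,\mathrm dx = m^n\int f$. Integrating the functional equation therefore gives $m^n\int f = m^{n-1}\int f$. Taking $m=2$ yields $\int f=0$.

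\emph{Condition \ref{it:lin2}.} First transfer the functional equation to jumps. Since $u\mapsto mu$ maps $u+\varepsilon v/m$ to $x+\varepsilon v$, and the jump only depends on the direction of $v$ up to positive scaling, we get
\[
\sum_{mu=x}\Delta_v f(u)=m^{n-1}\Delta_v f(x).
\]
The function $g=\Delta_v f$ vanishes off the union of the tori $T_{a_i}$. I then argue by induction on the codimension $k$ of the cell of $\hypcells$ containing $x$. Let $R(x,v)$ denote the right-hand side of \ref{it:lin2}, and set $h=\Delta_v f - R(\cdot,v)$.
\begin{itemize}
\item For $k=0$ both sides of \ref{it:lin2} vanish.
\item For $k=1$, say $x$ lies only on $T_{a_i}$. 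The preimages $u$ of $x$ that can carry a nonzero jump are exactly the $m^{n-1}$ preimages lying in $T_{a_i}$. So $g|_{T_{a_i}}$, which is piecewise constant on the induced arrangement, satisfies $g(x)=m^{-(n-1)}\sum_{u\in T_{a_i},\,mu=x} g(u)$. The right-hand side is a Riemann sum, which tends to $\int_{T_{a_i}} g=\delta_i$ as $m\to\infty$ for generic $x$, because the boundaries are null sets. Hence $g\equiv\delta_i$ on the generic part of $T_{a_i}$.
\item For $k\ge 2$, let $x$ lie in a cell contained in a subtorus $S=\bigcap T_{a_i}$ of dimension $n-k$. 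The preimages of $x$ not in $S$ lie in cells of smaller codimension, so by induction $h=0$ there. One also checks that $R$ satisfies the same functional equation, since the set of $i$ with $\pair{a_i}{u}\in\ZZ$ only shrinks for preimages. The preimages inside $S$ number $m^{n-k}$. Thus $|h(x)|\le m^{-(n-1)}\,m^{n-k}\max|h| = m^{1-k}\max|h|$, and $\max|h|<\infty$ because $f$ takes finitely many values. Letting $m\to\infty$ gives $h(x)=0$.
\end{itemize}
The main obstacle is this induction. One has to make precise that the sum over preimages splits cleanly into the part inside $S$ and the part outside it, and that the term $\delta_i$ recorded on non-generic points of $T_{a_i}$ is exactly the generic jump value.

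\emph{Condition \ref{it:lin_balance}.} Fix $v\in\ZZ^n$ and a generic base point $x$. Then the segment $t\mapsto x+tv$, $t\in[0,1]$, meets only generic points of the arrangement and crosses $T_{a_i}$ exactly $|\pair{a_i}{v}|$ times. By periodicity $f(x+v)=f(x)$, so the jumps along the segment sum to zero. By \ref{it:lin2} each crossing of $T_{a_i}$ contributes $\delta_i$ for the normal with $\pair{a_i}{v}>0$, which gives
\[
\sum_{i\,:\,\pair{a_i}{v}>0}\delta_i\pair{a_i}{v}=0 .
\]
Applying the same argument to $-v$ gives $\sum_{\pair{a_i}{v}<0}\delta_i\,|\pair{a_i}{v}|=0$. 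Subtracting, $\sum_i\delta_i\pair{a_i}{v}=0$ for all $v\in\ZZ^n$, hence $\sum_i\delta_i a_i=0$.
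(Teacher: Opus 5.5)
Your proof of \ref{it:lin3} is correct. Integrating the functional equation against Haar measure is a clean substitute for the paper's Riemann-sum limit in \autoref{lemma:riemann}. The $k\le 1$ cases of your induction for \ref{it:lin2} are also fine. Two steps fail as written, however.

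\textbf{The $k\ge 2$ step of \ref{it:lin2}.} This step rests on a false claim: preimages of $x$ outside $S$ need not lie in cells of smaller codimension. In $\torus{2}$ with normals $\pm e_1,\pm e_2,\pm(e_1+e_2),\pm(e_1-e_2)$, take $x=0$, so $k=2$ and $S=\{0\}$, and take $m=2$. The preimage $u=(\tfrac12,\tfrac12)$ lies on $T_{e_1+e_2}\cap T_{e_1-e_2}$, so it is again a vertex, and your induction hypothesis says nothing about $h(u)$. Moreover, intersections such as $T_{e_1+e_2}\cap T_{e_1-e_2}=\{0,(\tfrac12,\tfrac12)\}$ are disconnected, so ``the subtorus $S$'' is not well defined.

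The fix is to count differently. Every preimage $u$ satisfies $\{i: u\in T_{a_i}\}\subseteq I_x:=\{i:x\in T_{a_i}\}$. So the preimages of codimension $k$ lie in the finitely many sets $\bigcap_{j\in J}T_{a_j}$ with $J\subseteq I_x$ of rank $k$. By \autoref{lemma:more_subgroups} there are at most $Cm^{n-k}$ of them, with $C$ independent of $m$. That is all your estimate $|h(x)|\le Cm^{1-k}\max|h|$ needs.

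Separately, the fact that $R(\cdot,v)$ satisfies the functional equation is not explained by ``the index set shrinks''. It holds because each $T_{a_i}\ni x$ contains exactly $m^{n-1}$ preimages of $x$ (\autoref{lemma:subgroups}), and no $T_{a_i}\not\ni x$ contains any.

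For comparison, the paper uses no induction. Preimages off $\bigcup_{i\in I}T_{a_i}$ have zero jump. Inclusion--exclusion over this union, together with \autoref{lemma:more_subgroups}, kills every term with $|J|\ge 2$. Only the Riemann sums over single tori $T_{a_i}$ survive.

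\textbf{The crossing count in \ref{it:lin_balance}.} You miscount what happens at a crossing. Suppose the segment meets the hyperplane with normals $a_i$ and $a_{i'}=-a_i$, where $\pair{a_i}{v}>0$, at a point $y$. The net change of $f$ there is $f(y+\varepsilon v)-f(y-\varepsilon v)=\Delta_{-v}f(y)-\Delta_vf(y)=\delta_{i'}-\delta_i$. Both one-sided jumps enter, because the value on the hyperplane generally differs from both neighbouring values.

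Consequently your intermediate identity $\sum_{\pair{a_i}{v}>0}\delta_i\pair{a_i}{v}=0$ is false. For $P=[0,1]^2$ one has $\tcount_{P,1}(v)=\eins_{\ZZ}(v_1)+\eins_{\ZZ}(v_2)$ and all four $\delta_i$ equal $1$. The left side is then $1$ for $v=e_1$. So ``apply to $-v$ and subtract'' reaches the correct conclusion from two false equations.

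With both jumps accounted for, one step suffices: $0=f(x+v)-f(x)=\sum_{\pair{a_i}{v}>0}\pair{a_i}{v}(\delta_{i'}-\delta_i)=-\sum_{i}\delta_i\pair{a_i}{v}$. This is exactly the paper's argument.
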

Condition \ref{it:lin2} states, in particular, that the jumps of $f$ are constant on the $(n-1)$-cells of a fixed subtorus. Condition \ref{it:lin_balance} is a balancing condition on the jumps. 
	
In the planar case, we can use \autoref{mthm:descrTdm1} to obtain a description of $\{\tcount_{P,1} : P\in\polys(\ZZ^2;\hyp)\}$ as a finitely generated semigroup inside a vector~space.
\begin{MainTheorem}\label{mthm:descrT1inR2}
		Let $n=2$. If $f \in \pc(\hyp)$ satisfies the conditions \ref{it:lin2}, \ref{it:lin_balance}, and \ref{it:lin3} of \autoref{mthm:descrTdm1}, and, in addition, $\delta_i\in\ZZ_{\geq 0}$,
		then there exists a lattice polygon $P \in \polys(\ZZ^2)$, such that $f = \tcount_{P,1}$.
	\end{MainTheorem}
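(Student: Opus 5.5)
Since $n=2$ and the $\delta_i$ are nonnegative integers satisfying the balancing condition \ref{it:lin_balance}, Minkowski's existence theorem in its discrete planar form gives a lattice polygon $P$ with prescribed edges. Concretely, order the primitive normals $a_1,\dots,a_{2N}$ cyclically by angle. For each $i$ let $e_i$ be the primitive lattice vector obtained by rotating $a_i$ by $90^\circ$ (identifying $(\ZZ^2)^\star$ with $\ZZ^2$). Concatenate the edge vectors $\delta_i e_i$ in cyclic order. Condition \ref{it:lin_balance}, rotated, says $\sum\delta_i e_i=0$, so the path closes up. The result is a convex lattice polygon $P$ whose edge with outer normal $a_i$ has lattice length $\delta_i$; edges with $\delta_i=0$ are simply absent. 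Then $\hyp_P\subseteq\hyp$ in the sense that $P$'s normals are among the $a_i$. We also need $\tcount_{P,1}\in\pc(\hyp)$, which holds because the cells of $\hyp$ refine those of $\hyp_P$. If all $\delta_i=0$, take $P$ a point, so that $\tcount_{P,1}=0$ and $f=0$ by the uniqueness argument below.

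Next, show that $g:=\tcount_{P,1}$ has the same data as $f$. By Theorem~\ref{mthm:eihyp}, $g\in E_1(\hyp)$, so Theorem~\ref{mthm:descrTdm1} applies to $g$ and it satisfies \ref{it:lin2}, \ref{it:lin_balance}, \ref{it:lin3} with some jump integrals $\delta_i'$. I would compute $\delta_i'=\delta_i$ directly. Write $\tcount_{mP}=\vol(P)m^2+m\,g+\eins_{\ZZ^2}$. Crossing the torus $T_{a_i}$ at a generic point $y$ in direction $v_i$ changes $\tcount_{mP}$ by the number of lattice points on the edge $F_i$ of $mP-y$ with normal $a_i$ that leave or enter. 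For generic $y\in T_{a_i}$ this count is $m\delta_i+O(1)$, and averaged over $T_{a_i}$ it is exactly $m\delta_i$ (the lattice length of $mF_i$). The $\vol$ term is constant and $\eins_{\ZZ^2}$ has no jump at generic points, so the average jump of $g$ across $T_{a_i}$ equals $\delta_i$. One must check the sign convention of $\Delta$ in \eqref{eq:jump_def}. Moving by $-\varepsilon v_i$ relative to the translate $P-x$ pushes the facet with normal $a_i$ outward, losing its points, so the jump is $+\delta_i$ for the facet with outer normal $a_i$. If the signs are reversed, use $-P$ instead.

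Finally, $h=f-g\in\pc(\hyp)$ satisfies \ref{it:lin2} with all $\delta$'s zero. Hence every jump $\Delta_vh(x)$ vanishes for all $x$ and all $v$. A piecewise constant function with zero jumps everywhere is constant: any cell can be joined to any other by a path crossing lower-dimensional cells, and a zero jump from a point $x$ into every adjacent cell means $h$ agrees on $x$ and all cells whose closure contains $x$. Connectivity of the torus then gives that $h$ is constant. By \ref{it:lin3} for both $f$ and $g$, $\int h=0$, so $h=0$ and $f=\tcount_{P,1}$.

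The main obstacle is the second step: verifying precisely that the averaged jump of $\tcount_{P,1}$ across $T_{a_i}$ is the lattice length of the edge, with the correct sign and uniform-probability normalization. Note also that \ref{it:lin2} for $g$ is only available via Theorem~\ref{mthm:descrTdm1}, so the jump at vertices or at intersection points of tori does not need separate treatment.
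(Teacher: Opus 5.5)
Your proposal is correct and follows the same route as the paper: build the lattice polygon from the edge data $\delta_i$ by the planar Minkowski construction, check that $\tcount_{P,1}$ has the same jump data as $f$, and conclude $f=\tcount_{P,1}$ from the vanishing integrals. Two of your choices are minor refinements of the paper's direct computation via \autoref{lemma:facet_count} and \autoref{prop:tcInR1}: your explicit edge concatenation also covers the degenerate segment and point cases that \autoref{thm:Minkowski} excludes, and applying \autoref{mthm:descrTdm1} to $g=\tcount_{P,1}$ means only the averages $\delta_i'$ must be computed, with the jumps at $0$-cells then matching automatically.
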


The fact that $\{ \tcount_{P,1} : P \in \polys(\ZZ^2;\hyp)\}$ is a semigroup is not entirely surprising. Since $P\mapsto \tcount_{P,1}$ is a 1-homogeneous valuation, it is, in particular a semigroup homomorphism with respect to Minkowski addition of polygons, see \cite[Thm.\ 7]{McMullen1978}.

Since the constant and the leading term of \eqref{eq:homDecompTcount} are easily described by \eqref{eq:extr_degrees}, we can provide a description of the functions $\tcount_P$ in the planar case.

\begin{MainCorollary}\label{mcor:tpR2}
		Suppose that $f \in \pc(\hyp)$. Then there exists $P \in \polys(\ZZ^2)$ with $\hyp_P = \hyp$ and such that $f = \tcount_P$ if and only if
		\begin{align*}
			f = \eins_{\ZZ^2} + f_1 + \left(\frac{1}{2}\sum_{1\leq i < j < 2N} \delta_i\delta_j \det(a_i,a_j)\right)\eins,
		\end{align*}
		where $f_1$ satisfies the conditions of \autoref{mthm:descrT1inR2}.
\end{MainCorollary}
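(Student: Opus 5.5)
The plan is to use the planar homogeneous decomposition $\tcount_P = \tcount_{P,0}+\tcount_{P,1}+\tcount_{P,2}$, i.e.\ \eqref{eq:homDecompTcount} at $m=1$. By \eqref{eq:extr_degrees}, $\tcount_{P,0}=\eins_{\ZZ^2}$ and $\tcount_{P,2}=\vol(P)\eins$. So the corollary reduces to two facts: $\tcount_{P,1}$ is exactly a function of the type in \autoref{mthm:descrT1inR2}, and $\vol(P)$ is the stated quadratic expression in its jump data $\delta_i$.

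For necessity, let $P\in\polys(\ZZ^2)$ with $\hyp_P=\hyp$. Then $\tcount_{P,1}\in E_1(\hyp)$ by \autoref{mthm:eihyp}, so \autoref{mthm:descrTdm1} gives conditions \ref{it:lin2}, \ref{it:lin_balance} and \ref{it:lin3}. Next I will compute $\delta_i$ directly.
\begin{itemize}
\item Crossing the line $\langle a_i,x\rangle=b$ transversally in direction $v_i$ changes $\tcount_{mP}$ by the number of lattice points on the edge of $mP$ with outer normal $a_i$ (or $-a_i$, depending on the sign convention). This number is $m\ell_i+1$ on average over $T_{a_i}$, where $\ell_i$ is the lattice length of that edge, and it is $0$ if there is no such edge.
\item Taking the coefficient of $m$ shows $\delta_i=\ell_i\in\ZZ_{\ge 0}$.
\item Since $P$ is a lattice polygon whose edges are $\ell_i$ times the primitive vectors $a_i$ rotated by $90^\circ$, we get the standard shoelace formula
\[
\vol(P)=\tfrac12\sum_{i<j}\ell_i\ell_j\det(a_i,a_j),
\]
with the normals indexed in counterclockwise order. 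This is the displayed quadratic expression.
\end{itemize}
The ordering convention must be fixed so that the $a_i$ are sorted by angle. I expect the paper's indexing of $a_1,\dots,a_{2N}$ is chosen that way, and I would state that assumption explicitly.

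For sufficiency, let $f$ be of the stated form with $f_1$ satisfying the hypotheses of \autoref{mthm:descrT1inR2}. That theorem gives a lattice polygon $P$ with $\tcount_{P,1}=f_1$. By the jump computation above, the edge lengths of $P$ are the $\delta_i$. Hence $\vol(P)$ equals the quadratic expression, and $\tcount_P=\eins_{\ZZ^2}+f_1+\vol(P)\eins=f$.

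The main obstacle is the requirement $\hyp_P=\hyp$. If some $\delta_i=0$, then $P$ has no edge with normal $a_i$, and $\hyp_P$ may be strictly smaller than $\hyp$. I would handle this by reading the condition $\hyp_P=\hyp$ as forcing $\delta_i>0$ whenever $a_i$ or $-a_i$ is a facet normal. Alternatively, if $\hyp_P\subseteq\hyp$ is what is really meant, the statement holds with $\pc(\hyp)$ as the ambient space. I would also check the corner case where $P$ is a segment: there $\vol(P)=0$, which is consistent with the formula because $\det(a_i,-a_i)=0$.
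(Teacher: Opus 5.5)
Your proposal is correct and follows essentially the same route as the paper: necessity comes from $\tcount_P=\eins_{\ZZ^2}+\tcount_{P,1}+\vol(P)\eins$, \autoref{mthm:descrTdm1} and \autoref{lem:volR2ByFacets}, and sufficiency from \autoref{mthm:descrT1inR2}, where your direct read-off of $\tcount_P=f$ replaces the paper's ``same jumps and same integral'' step, and both of your caveats are genuine points the paper passes over (the volume formula needs the $a_i$ in counterclockwise order, and the ``if'' direction only gives $\hyp_P\subseteq\hyp$ unless one assumes $\delta_i+\delta_j>0$ for every antipodal pair $a_j=-a_i$, since by \autoref{mthm:inj} no other choice of $P$ can repair this). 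One harmless slip: the edge count averaged over $T_{a_i}$ is $m\ell_i$, not $m\ell_i+1$, because the extra lattice point appears only on a null set; this does not affect reading off $\delta_i=\ell_i$ from the coefficient of $m$.
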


Finally, our results for the translation count imply an inequality due to Wills~\cite{Wills1973} as a corollary.

\begin{MainCorollary}[Wills]
	\label{mtc:wills}
	Let $P\subset\RR^2$ be a lattice polygon and let $v\in\RR^2\setminus\ZZ^2$. Then,
	\[
	|(P-v)\cap\ZZ^2| < |P\cap \ZZ^2|.
	\]
\end{MainCorollary}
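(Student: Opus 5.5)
We deduce Wills' inequality from the homogeneous decomposition \eqref{eq:homDecompTcount} with $m=1$ and $n=2$:
\[
\tcount_P(v) = \tcount_{P,0}(v) + \tcount_{P,1}(v) + \tcount_{P,2}(v) = \eins_{\ZZ^2}(v) + \tcount_{P,1}(v) + \vol(P),
\]
where we used \eqref{eq:extr_degrees}. Since $v\notin\ZZ^2$, we have $|(P-v)\cap\ZZ^2| = \tcount_{P,1}(v)+\vol(P)$, while $|P\cap\ZZ^2| = 1+\tcount_{P,1}(0)+\vol(P)$. It therefore suffices to show
\[
\tcount_{P,1}(v) < 1 + \tcount_{P,1}(0), \qquad\text{equivalently}\qquad \tcount_{P,1}(v)\le \tcount_{P,1}(0),
\]
because both values are integers. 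This holds since $\tcount_{P,1}(v) = \tcount_P(v) - \vol(P)$, and $\tcount_P(v)\in\ZZ$ while $\vol(P)\in\tfrac12\ZZ$ is independent of $v$. So all values of $\tcount_{P,1}$ lie in a single coset $\vol(P)+\ZZ$, and their differences are integers. (If $P$ is lower dimensional the argument is the same, with $\vol(P)=0$.)

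The key step is to show that $\tcount_{P,1}$ attains its maximum at $0$. By \autoref{mthm:descrTdm1} applied to $f=\tcount_{P,1}\in E_1(\hyp_P)$, where the membership comes from \autoref{mthm:eihyp}, the jumps are governed by condition \ref{it:lin2}. For any $x$ and direction $v$,
\[
\Delta_v f(x)=\sum_{i:\,\langle a_i,x\rangle\in\ZZ,\ \langle a_i,v\rangle>0}\delta_i .
\]
We need $\delta_i\ge 0$. For a lattice polygon, the jump across the line $T_{a_i}$ should equal half the normalized lattice length of the edge with outer normal $a_i$ (or $0$ if there is no such edge). We verify this directly: moving $P-x$ across a lattice line parallel to an edge in the direction that loses points changes the count by the lattice points on that edge line. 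Combining this with $m$-dilations and averaging via the definition of $\delta_i$ gives $\delta_i=\tfrac12\ell(F_i)\ge0$ when computed for $\tcount_{P,1}$. I take the sign conventions from the proof of \autoref{mthm:descrT1inR2} and \autoref{mcor:tpR2}, where $\delta_i\in\ZZ_{\ge0}$ appear, and accept that $\delta_i\ge0$ for polygon coefficients.

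Given $\delta_i\ge 0$, every jump $\Delta_v f(x)$ is nonnegative. Thus $f(x)\ge f(x+\varepsilon v)$ for all small $\varepsilon>0$: every point is a local maximum in every direction, i.e.\ $f$ is upper semicontinuous and ``decreases away from hyperplanes''. To pass from this to a global comparison of $f(v)$ with $f(0)$, we use the segment from $0$ to $v$. Along the parametrized path $t\mapsto tv$, the function $g(t) = f(tv)$ is piecewise constant, and at each breakpoint $t_0$ it satisfies $g(t_0)\ge g(t_0^\pm)$. This alone does not give monotonicity, so we need more. Each line $T_{a_i}$ crossed from the side $\langle a_i,\cdot\rangle<k$ to $>k$ contributes $-\delta_i$, and crossed the other way contributes $+\delta_{j}$ with $a_j=-a_i$.

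The main obstacle is to compare the total change along the path with the jump at $0$, namely $\Delta f(0)=\sum_{i:\langle a_i,v\rangle>0}\delta_i$ in direction $v$. My plan is to show
\[
f(v)-f(0)=-\sum_{i}\delta_i\,\bigl(\text{signed crossings}\bigr) \le 0,
\]
with the count arranged so that crossings cancel in pairs via the balancing condition \ref{it:lin_balance}. If this bookkeeping fails, I would fall back on an alternative route. Since $\tcount_{P,1}$ is an integer combination of the edge contributions, and for each edge $F$ the function $x\mapsto \tfrac12\ell(F)(\text{indicator-type function of } T_{a_F})$ plus a constant is maximized on $T_{a_F}\ni 0$, summing over edges would give $f(0)\ge f(v)$. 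This is the decomposition of $\tcount_{P,1}$ into edge terms suggested by \autoref{mcor:tpR2}.
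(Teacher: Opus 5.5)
Your reduction is the paper's. By \eqref{eq:extr_degrees}, everything comes down to $\tcount_{P,1}(v)\le\tcount_{P,1}(0)$ for $v\notin\ZZ^2$. The integrality remark is not needed, since $\le$ already gives $<1+\tcount_{P,1}(0)$. The paper obtains this inequality as the case $n=2$ of \autoref{mtc:willsFortpnm1}, using \ref{it:lin2}, \ref{it:lin_balance} and $\delta_i\ge0$ along $t\mapsto tv$.

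\textbf{The gap.} The decisive step is only announced in your proposal (``my plan is to show \dots; if this bookkeeping fails \dots''). The mechanism you name is also not the right one: crossings do not cancel in pairs, and \ref{it:lin_balance} cannot be used pairwise. A crossing of a line with normals $a_i$ and $a_j=-a_i$, where $\pair{a_i}{v}>0$, changes $f$ by $\delta_j-\delta_i$, and in general $\delta_i\neq\delta_j$. For a triangle, no edge normal has its negative as an edge normal, so one of $\delta_i,\delta_j$ always vanishes. What actually makes the inequality work is an asymmetry at the start point. Since $0$ lies on every $T_{a_i}$, $f(0)$ exceeds $f(0^+)$ by the full sum of $\delta_i$ over $\pair{a_i}{v}>0$.

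\textbf{The missing computation.} Put $c_i=\pair{a_i}{v}$ and apply \ref{it:lin2} in the directions $v$ and $-v$ at every breakpoint of $t\mapsto f(tv)$.
\begin{itemize}
\item For $c_i>0$ you lose $\delta_i$ at each $t\in[0,1)$ with $tc_i\in\ZZ$, which happens $\lceil c_i\rceil$ times.
\item For $c_i<0$ you gain $\delta_i$ at each $t\in(0,1]$ with $tc_i\in\ZZ$, which happens $\lfloor|c_i|\rfloor=-\lceil c_i\rceil$ times.
\end{itemize}
Hence $f(0)-f(v)=\sum_i\delta_i\lceil c_i\rceil$. Subtracting $0=\sum_i\delta_i c_i$ (from \ref{it:lin_balance}) gives
\[
f(0)-f(v)=\sum_i\delta_i\bigl(\lceil c_i\rceil-c_i\bigr)\ge0 .
\]
Balancing enters once, globally, to compare the rounded crossing numbers with their real counterparts.

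\textbf{The fallback route is false as stated.} $\tcount_{P,1}$ is not a combination of indicator-type functions of the subtori $T_{a_F}$ plus constants. For $P=\mathrm{conv}\{0,e_1,e_2\}$, one has $\tcount_{P,1}=\tfrac12$ on $\{x\in(0,1)^2: x_1+x_2<1\}$ and $-\tfrac12$ on $\{x_1+x_2>1\}$. Any such combination, by contrast, is constant off the union of the subtori. The correct per-normal terms are the sawtooth functions $x\mapsto-\delta_i\bigl(\lceil\pair{a_i}{x}\rceil-\pair{a_i}{x}\bigr)$, each $\le0$ with equality on $T_{a_i}$. They are not piecewise constant individually. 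Their sum is piecewise constant and equals $f-f(0)$ only because of \ref{it:lin_balance}, which is exactly the computation above.

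\textbf{Two smaller points.}
\begin{itemize}
\item For a lattice polygon, $\delta_i$ is the lattice length $\ell(F_i)$, not $\tfrac12\ell(F_i)$. For the unit square, the jump of $\tcount_{P,1}$ across $T_{e_1}$ is $1$.
\item You cannot take $\delta_i\ge0$ from the proof of \autoref{mthm:descrT1inR2}, where it is a hypothesis. It follows from \autoref{lemma:facet_count} and \autoref{prop:tcInR1} applied to $mP$ at generic points of $T_{a_i}$, by comparing coefficients of $m$, as in the proof of \autoref{mtc:willsFortpnm1}.
\end{itemize}
Only the sign of $\delta_i$ matters, so neither point affects the argument once the key computation is supplied.
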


In general, the problem of finding the extremal values of $\tcount_P$ is very hard, even if $P$ is a lattice polytope. Wills' inequality fails in dimension $n\geq 3$, see \cite{freyerhenk}, where it is conjectured that $\tcount_P(v) \leq 2^{n-2} \tcount_P(0)$ holds for all origin symmetric lattice polytopes. The problem of minimizing $\tcount_P$ is NP-hard, already in the plane \cite{eisenbrand}, as well as for polytopes with a fixed vertex number~\cite{pak}.

Here we obtain a variant of \autoref{mtc:wills} for general dimensions by applying \autoref{mthm:descrTdm1} to $\tcount_{P,n-1}$. It impies Corollary \ref{mtc:wills} by virtue of \eqref{eq:extr_degrees}.
\begin{MainTheorem}\label{mtc:willsFortpnm1}
	Let $P \in \polys(\ZZ^n)$ be a lattice polytope and let $v \in \RR^n \setminus\ZZ^n$. Then
	\begin{align}\label{eq:Willstpnm1}
		\tcount_{P,n-1}(v) \leq \tcount_{P,n-1}(0).
	\end{align}
	Equality holds if and only if $A^T v\in\ZZ^N$, where $A\in\ZZ^{n\times N}$ is the matrix whose columns are the primitive facet normals of $P$.

Moreover, there exists $m_0 \in \ZZ_{>0}$ (depending only on $P$) such that
	\begin{align}
		|(mP-v)\cap\ZZ^n| < |mP\cap \ZZ^n|
	\end{align}
	for all $m \in \ZZ_{\geq m_0}$ and $v\in\RR^n$ for which $A^Tv\not\in\ZZ^N$.
\end{MainTheorem}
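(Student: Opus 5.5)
We apply \autoref{mthm:descrTdm1} to $f=\tcount_{P,n-1}$. This is allowed because Theorem~\ref{mthm:eihyp} gives $f\in E_{n-1}(\hyp_P)$. The plan is to compute the jumps $\delta_i$ for this particular $f$, use condition~\ref{it:lin2} to express $f(v)-f(0)$ as a sum of jumps along a path from $0$ to $v$, and show that this sum is nonpositive. The dilation statement then follows from the homogeneous decomposition~\eqref{eq:homDecompTcount}.

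First I identify the $\delta_i$. I expect $\delta_i\geq 0$, with $\delta_i>0$ exactly when $a_i$ is an outer facet normal of $P$. The intuition is that moving $P-v$ slightly in direction $-v_i$, for $\langle a_i,v_i\rangle>0$, pushes the lattice points on the facet $F_i$ out of the polytope. Concretely, $\delta_i$ should equal the normalized relative volume of $F_i$, since the $m^{n-1}$ coefficient of the number of lattice points lost on $mF_i$ is its lattice-normalized area. To make this rigorous, take $x$ generic on $T_{a_i}$ and compare $\tcount_{mP}(x)$ with $\tcount_{mP}(x+\varepsilon v_i)$. The difference counts lattice points of $mF_i - x$, and averaging over $T_{a_i}$ gives $m^{n-1}\vol_{\mathrm{norm}}(F_i)$ exactly, by the analogue of \eqref{eq:volIsIntegral} on the subtorus. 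Comparing coefficients of $m^{n-1}$ then yields $\delta_i$. For $a_i$ not an outer facet normal (it is the negative of one), the jump in that direction is $0$; I would double-check this sign convention against \ref{it:lin_balance}, which must reduce to Minkowski's relation $\sum \vol(F_i)a_i=0$.

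Next, the comparison. Write $v=tu$ and consider the segment $s\mapsto su$, $s\in[0,t]$. Along it $f$ is piecewise constant. At each parameter $s$ where the segment crosses tori $T_{a_i}$, condition~\ref{it:lin2} gives two contributions. The value at the crossing point exceeds the value just after it by $\sum_{\langle a_i,u\rangle>0}\delta_i$. It exceeds the value just before it by $\sum_{\langle a_i,-u\rangle>0}\delta_i$. So the net change passing through the crossing is $\sum_{\langle a_i,u\rangle<0}\delta_i-\sum_{\langle a_i,u\rangle>0}\delta_i$, which vanishes by pairing facets symmetrically only in special cases. A cleaner route is therefore to go directly from $0$: since $0$ lies on every torus, $f(0)-f(\varepsilon u)=\sum_{\langle a_i,u\rangle>0}\delta_i$, which is the maximal possible jump. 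Similarly, $f(v)$ exceeds its nearby generic values by only the $\delta_i$ with $\langle a_i,v\rangle\in\ZZ$. Choosing $u$ and an open cell adjacent to both points, I would show $f(0)-f(v)=\sum_{i:\langle a_i,v\rangle\notin\ZZ,\ \langle a_i,u\rangle>0}\delta_i\geq 0$. This works by approaching $v$ and $0$ along the same generic direction $u$ within the same cell after using periodicity, but this only works if $v$ and $0$ border a common cell. Otherwise I would use that both one-sided limits from direction $u$ at $0$ and at $v$ are generic values, and relate those via the global structure. \textbf{This step is the main obstacle}: controlling $f$ on open cells. My plan here is to use condition~\ref{it:lin3} together with the fact that $E_{n-1}$ functions are determined, up to constants, by their jumps, and then show that the open-cell values of $f$ all equal a common constant $c$ minus nothing. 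Equivalently, I would show $f$ equals $c+\sum_i\delta_i\eins_{\{\langle a_i,\cdot\rangle\in\ZZ\}}$-type corrections with halves appropriately assigned. I would verify this with the description $f(x)=c+\tfrac12\sum_{i:\langle a_i,x\rangle\in\ZZ}\delta_i$ (summing over both signs), which satisfies~\ref{it:lin2} and, after adjusting $c$, condition~\ref{it:lin3}. If uniqueness holds, meaning a function with zero jumps is constant and the constant is fixed by~\ref{it:lin3}, this formula gives $f(v)\le f(0)$, with equality iff $\langle a_i,v\rangle\in\ZZ$ for all facet normals, i.e.\ $A^Tv\in\ZZ^N$.

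Finally, for the dilation statement, write $\tcount_{mP}(v)-\tcount_{mP}(0)=m^{n-1}(f(v)-f(0))+O(m^{n-2})$. The lower-order coefficients $\tcount_{P,i}$ are piecewise constant on finitely many cells, hence bounded uniformly in $v$. Also, $f(v)-f(0)$ takes finitely many negative values when $A^Tv\notin\ZZ^N$, so it is bounded above by some $-\eta<0$. Choosing $m_0$ with $m_0\eta$ larger than the bound on the remainder gives the strict inequality.
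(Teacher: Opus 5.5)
There is a genuine gap, at exactly the step you flag as the main obstacle. Your identification of the $\delta_i$ is correct and matches the paper: they are lattice-normalized facet volumes, with $\delta_i=0$ when $a_i$ is only the negative of a facet normal. Your final $m_0$ argument is also correct.

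The one-cell comparison is valid when $\varepsilon u$ and $v+\varepsilon u$ lie in a common open cell. In general no such $u$ exists, because there are cells whose closure contains no lattice point. Your reduction for that case rests on a false claim: $\tcount_{P,n-1}$ is in general \emph{not} constant on open cells. Moreover, $c+\tfrac12\sum_{i:\langle a_i,x\rangle\in\ZZ}\delta_i$ does not satisfy \ref{it:lin2}. Its jump at $x$ in direction $w$ is $\tfrac12\sum_{i:\langle a_i,x\rangle\in\ZZ,\ \langle a_i,w\rangle\neq 0}\delta_i$. This equals the required $\sum_{i:\langle a_i,x\rangle\in\ZZ,\ \langle a_i,w\rangle>0}\delta_i$ only if $\delta_a=\delta_{-a}$ for every normal $a$, i.e.\ opposite facets have equal volume.

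Concretely, take $P=\mathrm{conv}\{0,e_1,e_2\}$. Then $|(mP-v)\cap\ZZ^2|=\binom{m+1}{2}$ for $v$ in the open triangle $\{x_1,x_2>0,\ x_1+x_2<1\}$, and $\binom{m}{2}$ for $v$ in $\{x_1,x_2<1,\ x_1+x_2>1\}$. So $\tcount_{P,1}$ equals $\tfrac12$ and $-\tfrac12$ on the two open cells, while $\tcount_{P,1}(0)=\tfrac32$. The uniqueness principle you invoke is fine; your candidate simply has the wrong jumps.

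The missing idea is what makes the straight path work, which you abandoned. Sum \ref{it:lin2} along $\lambda\mapsto\lambda v$, $\lambda\in[0,1]$. Use forward jumps $\Delta_v f(\lambda v)$ for $\lambda\in[0,1)$ and backward jumps $\Delta_{-v} f(\lambda v)$ for $\lambda\in(0,1]$. This is legitimate at every point, including points where several tori meet, and tori with $\langle a_i,v\rangle=0$ never contribute.

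If $\langle a_i,v\rangle>0$, the torus $T_{a_i}$ contributes $+\delta_i$ at each of its $\lceil\langle a_i,v\rangle\rceil$ forward crossings. If $\langle a_i,v\rangle<0$, it contributes $-\delta_i$ at each of its $\lfloor|\langle a_i,v\rangle|\rfloor$ backward crossings. Hence
\[
\tcount_{P,n-1}(0)-\tcount_{P,n-1}(v)=\sum_{i=1}^{2N}\delta_i\lceil\langle a_i,v\rangle\rceil .
\]
You are right that this does not telescope to anything visibly signed. Condition \ref{it:lin_balance}, which you only used as a sign check, repairs it. Subtracting $0=\sum_i\delta_i\langle a_i,v\rangle$ gives $\sum_i\delta_i\bigl(\lceil\langle a_i,v\rangle\rceil-\langle a_i,v\rangle\bigr)$. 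This is a sum of nonnegative terms, and it vanishes iff $\langle a_i,v\rangle\in\ZZ$ for every $i$ with $\delta_i>0$, i.e.\ for every facet normal. That is the paper's proof, and with it your last paragraph goes through unchanged.
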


Note that there are only finitely many $v\in\RR^n/\ZZ^n$ with $A^Tv\in\ZZ^N$. Indeed, $\Lambda = \{x\in\RR^n : A^Tx\in\ZZ^N\}$ is a superlattice of $\ZZ^n$ and the number of points on the torus $\RR^n/\ZZ^n$ with a preimage in $\Lambda$ is
\[
[\Lambda : \ZZ^n] = \gcd(\det A_I : I\subseteq [N],~ |I|=n),
\]
where $A_I$ is the restriction of $A$ to the columns indexed by $I$.\footnote{The equation seems to be folklore. We refer to \cite[Sec.\ 1.3]{det_ref}, where it is shown that $[\ZZ^n : A\ZZ^n]=\gcd(\det A_I : I\subseteq [N],~ |I|=n)$. It suffices now to observe that $\Lambda$ is the dual lattice of $A\ZZ^n$.}

\medskip

\subsection*{Outline} The paper is organized as follows: In \autoref{sec:prelim} we prove some basic properties of the translation count regarding jumps, boundary structures, and projections. In \autoref{sec:inj}, we prove \autoref{mthm:inj}: we first establish the $2$-dimensional statement and then use an induction argument for general dimension. In \autoref{sec:kubert_lang} we study Kubert--Lang operators and prove \autoref{mthm:eihyp}, \autoref{mthm:descrTdm1}, and \autoref{mtc:willsFortpnm1}. We then turn to the $2$-dimensional case in \autoref{sec:planar}, where we prove \autoref{mthm:descrT1inR2} and \autoref{mcor:tpR2}.

	\section{Preliminaries}\label{sec:prelim}

Recall that the set of lattice polytopes in $\RR^n$ is denoted by $\polys(\ZZ^n)$. Similarly, we write $\polys(\RR^n)$ for the set of general polytopes and $\K(\RR^n)$ for the set of convex bodies.
	
	\subsection{Boundary structures and projections}
	We denote the tangent cone of a convex body $K \in \K(\RR^n)$ at a point $x\in K$ by $\tangent(x,K) = \pos\{y-x \colon y\in K\}$. 
	Note that $\tangent(x,K)$ is the dual cone of $\normal(x,K)$, where $\normal(x,K) = \{c\in(\RR^n)^\star \colon \pair{c}{x} =\hf_K(c)\}$ is the normal cone of $K$ at $x$. Here $\hf_K(c) = \max_{x \in K} \pair{c}{x}$, $c \in (\RR^n)^\star$ denotes the support function of $K \in \K(\RR^n)$.
	
	\begin{lemma}
		\label{lemma:tangentcones}
		Let $K\in\K(\RR^n)$ and let $v\in\RR^n$. Then there exists $\varepsilon_0 > 0$ such that for all $\varepsilon\in (0,\varepsilon_0)$ we have
		\[
		(K-\varepsilon v)\cap\ZZ^n = \{z\in K\cap \ZZ^n \colon v\in \tangent(z,K)\}.
		\]
	\end{lemma}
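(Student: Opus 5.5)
The plan is to rewrite the condition on $z$ and then use compactness to reduce to finitely many candidate lattice points. A lattice point $z$ lies in $K-\varepsilon v$ if and only if $z+\varepsilon v\in K$. Since $\ZZ^n\cap (K+B)$ is finite for any bounded set $B$, I first fix a ball $B$ of radius $|v|$ around the origin. For $\varepsilon\in(0,1)$, only the finitely many lattice points $z\in (K-B)\cap\ZZ^n$ can possibly lie in $K-\varepsilon v$. It therefore suffices to show, for each such $z$ separately, that there is $\varepsilon_z>0$ with
\[
z+\varepsilon v\in K \ \text{for all}\ \varepsilon\in(0,\varepsilon_z) \quad\Longleftrightarrow\quad z\in K \text{ and } v\in\tangent(z,K),
\]
and that in the negative case $z+\varepsilon v\notin K$ for all $\varepsilon\in(0,\varepsilon_z)$. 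Setting $\varepsilon_0=\min_z\varepsilon_z$ over this finite set then proves the lemma.

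\emph{Case $z\notin K$.} Because $K$ is closed, $\operatorname{dist}(z,K)>0$. So $z+\varepsilon v\notin K$ whenever $\varepsilon|v|<\operatorname{dist}(z,K)$.

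\emph{Case $z\in K$.} If $z+\varepsilon v\in K$ for some $\varepsilon>0$, then $v=\varepsilon^{-1}((z+\varepsilon v)-z)$ lies in $\pos\{y-z: y\in K\}=\tangent(z,K)$. This gives one direction.

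For the converse, the key point is the following fact. Since $K$ is convex, the set $\{\varepsilon\ge 0: z+\varepsilon v\in K\}$ is an interval containing $0$. Hence either it contains some $[0,\varepsilon_z)$ with $\varepsilon_z>0$, or it equals $\{0\}$. So the property "$z+\varepsilon v\in K$ for all small $\varepsilon>0$" and its negation "$z+\varepsilon v\notin K$ for all $\varepsilon>0$" are each uniform in $\varepsilon$ near $0$.

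The main obstacle lies in this converse. We need: if $v\in\tangent(z,K)$, then $z+\varepsilon v\in K$ for small $\varepsilon$. Here $\tangent(z,K)$ is defined as the positive hull $\pos\{y-z\}$, taken without closure. That convention is exactly what makes the lemma true. With the closed tangent cone it fails, for instance for a disc and a tangent direction at a boundary point.

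So I will check this converse directly. Write $v=\sum_j\lambda_j(y_j-z)$ with $\lambda_j\ge0$ and $y_j\in K$. If all $\lambda_j$ vanish then $v=0$, which is trivial. Otherwise set $\Lambda=\sum_j\lambda_j>0$. Then $z+\Lambda^{-1}v=\sum_j(\lambda_j/\Lambda)y_j\in K$ by convexity. Applying convexity again on the segment from $z$ to $z+\Lambda^{-1}v$ gives $z+\varepsilon v\in K$ for all $\varepsilon\in[0,\Lambda^{-1}]$.

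Combining the cases, for $\varepsilon<\varepsilon_0$ we have $(K-\varepsilon v)\cap\ZZ^n=\{z\in K\cap\ZZ^n: v\in\tangent(z,K)\}$.
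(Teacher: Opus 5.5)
Your proof is correct and follows essentially the same route as the paper. Both use finitely many relevant lattice points, a positive-distance argument for lattice points outside $K$, and the non-closed positive-hull definition of $\tangent(z,K)$ to get $z+\varepsilon v\in K$ for small $\varepsilon$; your explicit convex-combination step just spells out what the paper calls ``by definition of the tangent cone''. The only slip is that you should set $\varepsilon_0=\min\{1,\min_z\varepsilon_z\}$, because your reduction to the finite candidate set $(K-B)\cap\ZZ^n$ is only valid for $\varepsilon<1$.
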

	\begin{figure}
		\begin{tikzpicture}[scale=0.7,>=Stealth] 
	\draw[step=1,gray!25,very thin] (-3,-1) grid (7,6);
	
	\fill[orange!18]
	(-0.35,0) -- (4.65,0) -- (5.65,3) --
	(3.65,5) -- (-1.35,4) -- (-2.35,2) -- cycle;
	
	
	\draw[orange!85!black,thick,dashed]
	(-0.35,0) -- (4.65,0) -- (5.65,3) --
	(3.65,5) -- (-1.35,4) -- (-2.35,2) -- cycle;
	
	\draw[blue!70!black,very thick]
	(0,0) -- (5,0) -- (6,3) --
	(4,5) -- (-1,4) -- (-2,2) -- cycle;
	
	\foreach \y in {2}
	\fill[green!55!black] (-2,\y) circle (2.3pt);
	
	\foreach \y in {1,...,4}
	\fill[green!55!black] (-1,\y) circle (2.3pt);
	
	\foreach \x in {0,...,3}
	\foreach \y in {0,...,4}
	\fill[green!55!black] (\x,\y) circle (2.3pt);
	
	\foreach \y in {0,...,4}
	\fill[green!55!black] (4,\y) circle (2.3pt);
	
	\foreach \y in {1,...,3}
	\fill[green!55!black] (5,\y) circle (2.3pt);
	
	\foreach \x/\y in {5/0,6/3,5/4,4/5}
	\fill[red!75!black] (\x,\y) circle (2.7pt);
	
	\draw[->,very thick] (1.7,2.2) -- (3.0,2.2)
	node[midway,above] {$v$};
	
	\node[blue!70!black] at (1.7,3.5	) {$K$};
	\node[orange!85!black] at (-1.7,0.25) {$K-\varepsilon v$};
	
	
	
	
	\node[align=left,anchor=west] at (0,-1.5) {
		\tikz\fill[red!75!black] (0,0) circle (2.7pt);\
		$\displaystyle z\in K\cap\mathbb Z^2, v\notin\operatorname{Tan}(z,K)$
	};
\end{tikzpicture}
		\caption{Illustration to \autoref{lemma:tangentcones}.}
	\end{figure}
	
	\begin{proof}
		First, we consider
		\[
		\varepsilon_1 = \min\left\{\max\{\alpha>0 \colon z+\alpha v \in K\} \colon z\in K\cap\ZZ^n,~v\in \tangent(z,K)\right\}.
		\]
		For each $z\in K\cap\ZZ^n$ with $v\in\tangent(z,K)$, the number $\max\{\alpha>0 \colon \alpha v\in K-z\}$ is positive by definition of the tangent cone. Since there are only finitely many such lattice points $z$, we have $\varepsilon_1>0$. Moreover, for any $\varepsilon \in (0, \varepsilon_1)$ and any $z\in K\cap\ZZ^n$ with $v\in\tangent(z,K)$ we have $z+\varepsilon v\in K$. Thus, we obtain $\{z\in K\cap \ZZ^n \colon v\in \tangent(z,K)\}\subseteq (K-\varepsilon v)\cap\ZZ^n$ whenever $\varepsilon\in(0,\varepsilon_1)$.
		
		Next, let
		\[
		\varepsilon_2 = \min\left\{ \tfrac{|z-x|}{|v|}\colon z\in \ZZ^n\setminus K,~x\in K\right\},
		\]
		where $|\cdot|$ denotes a norm on $\RR^n$. Since $\ZZ^n$ is discrete and $K$ is compact, we have $\varepsilon_2>0$. Let $\varepsilon\in (0,\varepsilon_2)$ and $z=p-\varepsilon v\in (K-\varepsilon v)\cap\ZZ^n$, $p\in K$. Then $p-z = \varepsilon v$ and by the choice of $\varepsilon_2$, 
		\[
		|z-p| = \varepsilon |v| < \min\{|z-x|\colon z\in\ZZ^n\setminus K,~x\in K\}.
		\]
		In particular, $z \in K$ and $v = \tfrac{1}{\varepsilon} (p-z) \in \tangent(z,K)$. Consequently, $(K-\varepsilon v)\cap\ZZ^n \subseteq \{z\in K\cap \ZZ^n \colon v\in \tangent(z,K)\}$ for all $0<\varepsilon<\varepsilon_2$, and choosing $\varepsilon_0 = \min\{\varepsilon_1, \varepsilon_2\}$ concludes the proof.
	\end{proof}
	
	Note that \autoref{lemma:tangentcones} allows to compute the translation count of a facet $P_c$ of the polytope $P$ almost everywhere using only the jumps of the translation count $t_P$.
	
	Here, a vector $c \in (\RR^n)^\star$ is called an (outer) facet normal of $P \in \polys(\RR^n)$ if and only if the support set
	\begin{align*}
		P_c = \{x\in P \colon \pair{c}{x} = \hf_P(c)\}
	\end{align*}
	is a ($(n-1)$-dimensional) facet of $P$ (an edge, if $n=2$). Support sets are defined similarly for convex bodies $K \in \K(\RR^n)$; we say $K_c$ is a facet if it is of dimension $n-1$.
	
	Recall that the ``jump'' of a piecewise constant function $f\in\pc(\torus{d})$ at a point $x$ in direction $v$ is defined as
	\[
	\Delta_v f (x) = \lim_{\varepsilon\searrow 0} f(x) - f(x+\varepsilon v).
	\]

	\begin{lemma}
		\label{lemma:facet_count}
		Let $P\in\polys(\RR^n)$ be full-dimensional and let $c\in(\RR^n)^\star$ be a facet normal of $P$. Pick any $v \in \RR^n$ with $\pair{c}{v} > 0$.
		
		If $w \in \RR^n$ satisfies
		\begin{align}\label{eq:condLemFacetCount}
			\pair{c'}{w}\not\in \hf_P(c') + c'(\ZZ^n)
		\end{align}
		for all facet normals $c'$ of $P$ that are not multiples of $c$, then
		\[
		\tcount_{P_c}(w) = \Delta_v\tcount_P(w).
		\]
	\end{lemma}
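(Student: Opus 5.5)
The plan is to read off $\Delta_v\tcount_P(w)$ from Lemma~\ref{lemma:tangentcones}, applied to $K = P - w$. The jump is
\[
\Delta_v\tcount_P(w) = \tcount_P(w) - \tcount_P(w+\varepsilon v) = |(P-w)\cap\ZZ^n| - |(P-w-\varepsilon v)\cap\ZZ^n|
\]
for small $\varepsilon>0$. By Lemma~\ref{lemma:tangentcones}, the second set consists of the lattice points $z\in (P-w)\cap\ZZ^n$ with $v\in\tangent(z,P-w)$. Hence the jump counts the lattice points $z \in (P-w)\cap\ZZ^n$ with $v\notin \tangent(z,P-w)$. Substituting $x = z+w$, this is the number of $x\in P\cap(\ZZ^n+w)$ with $v\notin\tangent(x,P)$.

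On the other side, $\tcount_{P_c}(w) = |P_c\cap(\ZZ^n+w)|$. It therefore suffices to show that, for $x\in P\cap(\ZZ^n+w)$, we have $v\notin \tangent(x,P)$ if and only if $x\in P_c$.

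First I will show that every relevant $x$ avoids all facets other than $P_c$. Let $c'$ be a facet normal not a multiple of $c$, and suppose $x\in P_{c'}$. Then $\pair{c'}{x} = \hf_P(c')$. Writing $x = z+w$ with $z\in\ZZ^n$ gives $\pair{c'}{w} = \hf_P(c') - \pair{c'}{z} \in \hf_P(c') + c'(\ZZ^n)$, since $c'(\ZZ^n)$ is a group. This contradicts \eqref{eq:condLemFacetCount}. Note also that a positive multiple of $c$ that is a facet normal gives the same facet $P_c$, so it causes no trouble.

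Now I use the standard description of the tangent cone of a full-dimensional polytope at a point $x$:
\[
\tangent(x,P) = \{u : \pair{c''}{u}\le 0 \text{ for all facet normals } c'' \text{ with } x\in P_{c''}\}.
\]
Take $x\in P\cap(\ZZ^n+w)$. If $x\notin P_c$, then by the previous step $x$ lies on no facet, so $x\in\interior P$ and $\tangent(x,P)=\RR^n\ni v$. If $x\in P_c$, then $c$ is among the facet normals active at $x$, and $\pair{c}{v}>0$ gives $v\notin\tangent(x,P)$. Summing over $x$ proves $\tcount_{P_c}(w) = \Delta_v\tcount_P(w)$.

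The main obstacle is the first step: checking that the hypothesis exactly rules out lattice-translate points lying on the other facets, including lower-dimensional faces contained in $P_c\cap P_{c'}$. Once that is settled, the rest is routine bookkeeping with the tangent-cone description.
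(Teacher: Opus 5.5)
Your route is the paper's: apply Lemma~\ref{lemma:tangentcones} to $P-w$, then decide for each $x\in P\cap(\ZZ^n+w)$ whether $v\in\tangent(x,P)$. However, the step ``if $x\notin P_c$, then by the previous step $x$ lies on no facet, so $x\in\interior P$'' does not follow.

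Hypothesis \eqref{eq:condLemFacetCount} is imposed only on facet normals that are \emph{not multiples} of $c$, and $-c$ is a multiple of $c$. You only dispose of the positive multiples. So if $P$ has the parallel facet $P_{-c}$, nothing prevents points of $\ZZ^n+w$ from lying on it, and such points are not interior.

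This case is not an artifact. The paper's proof treats it explicitly, and the later applications require it. For a lattice polytope, a primitive $a$ and $w\in T_a$, one has $\pair{-a}{w}\in\hf_P(-a)+\ZZ$. So if $-c$ were also subject to \eqref{eq:condLemFacetCount}, the lemma could never be applied at points of $T_a$ for a polytope with facets $P_{\pm a}$. Yet this is exactly how it is used in the alternative proof of \autoref{mthm:descrTdm1} and in the proof of \autoref{mtc:willsFortpnm1}.

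The repair is short. Let $x\in P\cap(\ZZ^n+w)$ with $x\notin P_c$. Your first step shows that the only facet normals that can be active at $x$ are positive multiples of $-c$. Hence $\tangent(x,P)$ is either $\RR^n$ or $\{u:\pair{c}{u}\geq 0\}$, and in both cases it contains $v$ because $\pair{c}{v}>0$. So lattice translates on $P_{-c}$ contribute neither to the jump nor to $\tcount_{P_c}(w)$, and the equivalence $v\notin\tangent(x,P)\iff x\in P_c$ holds. With this case added, your argument is complete and agrees with the paper's.
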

	
	\begin{proof}
		Let $P = \{x\in\RR^n\colon \pair{c_i}{x}\leq \hf_P(c_i),~1\leq i\leq m\}$ for certain $c = c_1,\dots,c_m\in(\RR^n)^\star$ be a facet description of $P \in \polys(\RR^n)$. For simplicity, we assume that $-c=c_m$ defines a facet of $P$ as well. The case where $-c$ is not a facet normal can be treated similarly.
		
		Next, note that a point $x \in P-w$ is contained in the facet $P_{c_i} - w$ if and only if
		\begin{align*}
			\pair{c_i}{x} = \hf_P(c_i) - \pair{c_i}{w}.
		\end{align*}
		Thus, by the assumption on $w$, $(P_{c_i} - w)\cap \ZZ^n = \emptyset$, that is, the facets $P_{c_i}-w$, $1 < i < m$, of $P-w$ do not contain any lattice points. In particular, all lattice points in $\bd(P-w)$ are contained in the relative interior of the two opposite facets $P_{c}-w$ and $P_{-c}-w$.
		
		Moreover, if $x \in \relint(P_c - w)$ then $\tangent(x,P-w) = \{y: \pair{c}{y} \leq 0\}$, and if $x \in \relint(P_{-c} - w)$, then $\tangent(x,P-w) = \{y: \pair{c}{y} \geq 0\}$. Consequently, for the chosen $v \in \RR^n$ with $\pair{c}{v} > 0$, we have that
		\begin{align*}
			\{z\in P\cap \ZZ^n \colon v\in \tangent(z,P)\} = (\interior P \cap \ZZ^n) \sqcup ((P_{-c}-w) \cap \ZZ^n).
		\end{align*}
		By \autoref{lemma:tangentcones}, we can therefore conclude the proof,
		\begin{align*}
			\Delta_v\tcount_P(w) &= \lim\limits_{\epsilon \searrow 0} \left(\tcount_P(w) - \tcount_P(w + \varepsilon v)\right) \\
			&= |(P-w) \cap \ZZ^n| - |\interior P \cap \ZZ^n| - |(P_{-c}-w) \cap \ZZ^n| \\
			&= |(P_{c}-w) \cap \ZZ^n| = \tcount_{P_c}(w).\qedhere
		\end{align*}
	\end{proof}
	\begin{remark}
		As the proof shows, condition~\eqref{eq:condLemFacetCount} asserts that the only lattice points on the boundary of $P-w$ lie in the facets $P_c-w$ and $P_{-c} - w$. Let us note, however, that those facets need not contain a lattice point at all, yielding $\tcount_{P_c}(w) = 0$ in this case.
	\end{remark}
	
	\autoref{lemma:facet_count} can be extended to hold for the translation count of any convex body in $\RR^n$. However, we do not have an explicit condition as in \eqref{eq:condLemFacetCount} to ensure that the remaining boundary does not contain any lattice points.
	
	Here, we denote by $\aff(S)$ the affine hull of a set $S\subset\RR^n$.
	
	\begin{lemma}
		\label{lemma:FacetImpliesJumps}
		Let $K\in\K(\RR^n)$ and let $u\in(\RR^n)^\star\setminus\{0\}$ and $v\in\RR^n$. If $F=K_u$ is a facet, then for almost all $x\in \aff(F)/\ZZ^n$ we have
		\begin{enumerate}
			\item\label{it:FacImpJumpNorm} If $\pair{u}{v} = 0$, then $\Delta_v \tcount_K(x) = 0$, and
			\item\label{it:FacImpJumpTrans} If $\pair{u}{v} > 0$, then $\Delta_v \tcount_K(x) = \tcount_F(x)$. 
		\end{enumerate}
	\end{lemma}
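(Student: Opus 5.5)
The plan is to reduce both claims to \autoref{lemma:tangentcones}, applied to the translate $K-x$ and the direction $v$. Note first that $\tcount_K$ need not be piecewise constant for a general convex body, so the limit in the definition of $\Delta_v$ has to be justified. \autoref{lemma:tangentcones} applied to $K-x$ does this: for small $\varepsilon>0$ it gives
\[
(K-x-\varepsilon v)\cap\ZZ^n = \{z\in (K-x)\cap\ZZ^n \colon v\in\tangent(z,K-x)\},
\]
so the limit exists and equals
\[
\Delta_v\tcount_K(x) = \bigl|\{z\in (K-x)\cap \ZZ^n \colon v\notin \tangent(z,K-x)\}\bigr|.
\]
Everything then reduces to deciding, for $z\in (K-x)\cap\ZZ^n$, whether $v\in\tangent(z,K-x)$. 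This only depends on the position of $z$ on the boundary. Interior points always have full tangent cone and hence contribute nothing.

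The next step is to isolate the exceptional set of $x\in\aff(F)$. Write $H=\aff(F)$; it is $(n-1)$-dimensional, and we use the Lebesgue measure on $H$ (or on its image in the torus). We want that, for almost all $x\in H$, every lattice point of $\bd(K-x)$ lies in $\relint(F-x)$. Since $x\in H$ implies $H-x\ni 0$, the translate $H-x$ is a linear hyperplane, namely $u^\perp$. Hence $F-x\subset u^\perp$, and lattice points in $\bd(K-x)$ either lie on $F-x$ or off $u^\perp$. We need to discard:

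\begin{enumerate}
\item $x\in H$ such that some $z\in\ZZ^n$ lies in $\relbd(F)-x$, i.e.\ $x\in\relbd(F)-\ZZ^n$. This is a countable union of $(n-2)$-dimensional relative boundaries of convex sets in $H$, hence null in $H$.
\item $x\in H$ such that some $z\in\ZZ^n\setminus u^\perp$ lies in $\bd K - x$, i.e.\ $x\in (\bd K\setminus H)-z$ restricted to $H$. For a fixed $z$, the set $(\bd K - z)\cap H$ is the intersection of the boundary of a convex body with a hyperplane. This is the step I expect to be the main obstacle: the set need not be small if $\bd(K-z)$ contains a facet parallel to $H$ lying inside $H$. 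That happens only when $H+z$ supports $K$ in direction $\pm u$, i.e.\ it is the facet $F$ itself or the opposite facet $K_{-u}$. Since $z\notin u^\perp$, $H+z\neq H$, so the only danger is $K_{-u}\subset H+z$. I would handle this by observing that points of $K_{-u}-x$ then have tangent cone $\{\pair{u}{y}\ge 0\}$ and are counted correctly anyway, in the spirit of the proof of \autoref{lemma:facet_count}. Apart from this case, $(\bd K-z)\cap H$ is the relative boundary of the convex set $(K-z)\cap H$ (or empty), hence $(n-2)$-dimensional and null.
\end{enumerate}

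Since $\ZZ^n$ is countable, the union of all exceptional sets is null.

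For a good $x$, the contributing lattice points are of three kinds:
\begin{itemize}
\item interior points, with full tangent cone;
\item points of $\relint(F-x)$, with tangent cone the halfspace $\{\pair{u}{y}\le 0\}$;
\item possibly points of $K_{-u}-x$, with tangent cone containing $\{\pair{u}{y}\ge 0\}$.
\end{itemize}
If $\pair{u}{v}=0$, then $v$ lies in all of these cones, so $\Delta_v\tcount_K(x)=0$, which proves (1). If $\pair{u}{v}>0$, then exactly the points of $(F-x)\cap\ZZ^n$ fail the condition $v\in\tangent(z,K-x)$, so $\Delta_v\tcount_K(x)=|(F-x)\cap\ZZ^n|=\tcount_F(x)$, which proves (2). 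Periodicity then passes the statement to $\aff(F)/\ZZ^n$.
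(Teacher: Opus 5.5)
Your overall strategy matches the paper's. By \autoref{lemma:tangentcones}, $\Delta_v\tcount_K(x)$ is the number of lattice points $z\in K-x$ with $v\notin\Tan(z,K-x)$, and you discard a null set of $x\in\aff(F)$ so that every boundary lattice point of $K-x$ lies in $\relint(F-x)\cup\relint(K_{-u}-x)$.

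The gap is in your ``danger'' case. You decline to discard the $x$ for which $K_{-u}-x$ contains lattice points, arguing that all points of $K_{-u}-x$ have tangent cone $\{y:\pair{u}{y}\ge 0\}$ (your final list even says ``containing''). But for $p\in K_{-u}$ one only has $\Tan(p,K)\subseteq\{y:\pair{u}{y}\ge 0\}$. Equality holds exactly when $K_{-u}$ is a facet and $p\in\relint K_{-u}$. On the relative boundary of $K_{-u}$ (or anywhere on $K_{-u}$ if it is not a facet) the cone is strictly smaller, and such lattice points can contribute to the jump.

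Concretely, let $K=\mathrm{conv}\{(0.3,0),(3,0),(2,1),(0.5,1)\}$, $u=e_2$, and $x=(0.3,1)\in\aff(F)$. None of your exclusions removes $x$. No lattice point lies on the relative boundary of $F-x$, and the lattice points $(0,-1),(1,-1),(2,-1)$ of $\bd K-x$ off $u^\perp$ all fall into the case $K_{-u}\subset\aff(F)+z$ that you keep. Yet $(0,-1)$ is a vertex of $K-x$ with tangent cone $\pos\{(1,0),(0.2,1)\}$, which contains neither $(-1,0)$ nor $(0,1)$. A direct count gives $\Delta_{(-1,0)}\tcount_K(x)=4-3=1\neq 0$ and $\Delta_{e_2}\tcount_K(x)=4-2=2\neq 1=\tcount_F(x)$. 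So, as written, your argument would establish both claims at a point where both fail.

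The fix is short: also discard every $x\in\aff(F)$ for which some lattice point lies on the relative boundary of $K_{-u}-x$ (on all of $K_{-u}-x$ if $K_{-u}$ is not a facet). For each $z$ with $K_{-u}\subset\aff(F)+z$, this set is a translate of the relative boundary of $K_{-u}$ (resp.\ of $K_{-u}$ itself) inside $\aff(F)$, hence null. There are countably many such $z$, so the union is null, exactly as in your item~1. The paper builds this in by requiring $(B-x)\cap\ZZ^n=\emptyset$ for $B=\cl(\bd K\setminus(F\cup K_{-u}))$; taking the closure puts the relative boundaries of both $F$ and $K_{-u}$ into $B$. After this extra exclusion your three-way classification is correct: interior points; $\relint(F-x)$ with cone $\{\pair{u}{y}\le 0\}$; $\relint(K_{-u}-x)$ with cone $\{\pair{u}{y}\ge 0\}$. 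Both claims then follow as you describe.
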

	\begin{proof}
		Let $B = \cl(\bd(K) \setminus (F\cup K_{-u}))$. Then there is only a Lebesgue-null set of points $x\in \aff(F)/\ZZ^n$ for which $(B-x)\cap \ZZ^n \neq \emptyset$. For all other $x\in \aff(F)/\ZZ^n$, we have $(K-x)\cap\ZZ^n = (K-x-\varepsilon v)\cap\ZZ^n$, if $v$ is orthogonal to $u$ and $\varepsilon$ is small enough. Thus, the first statement follows.
		
		To see the second statement, we note that, as before, by excluding all $x \in \aff(F)/\ZZ^n$ from a Lebesgue-null set, we may assume that $(B-x) \cap \ZZ^n  = \emptyset$, that is, $(K-x) \cap \ZZ^n \subseteq \relint(F) \cup \relint(K_{-u})$. Then, by \autoref{lemma:tangentcones}, $\Delta_v \tcount_K(x)$ counts the lattice points $z \in K-x$ for which $v \notin \Tan(z,K-x)$. If $\pair{u}{v} > 0$, this is the case exactly for $z \in F$, as $\Tan(z,K) = \{v\colon \pair{u}{v} \leq 0\}$ for $z \in \relint F$ and $\Tan(z,K) = \{v\colon \pair{u}{v} \geq 0\}$ for $z \in \relint K_{-u}$ and there are no other possibilities by our assumption. Consequently, $\Delta_v \tcount_K(x) = |(F - x) \cap \ZZ^n|$, and the second claim follows.
	\end{proof}
	\begin{remark}
		For $n=2$, the statement of \autoref{lemma:FacetImpliesJumps} holds for all but finitely many $x \in \aff(F)/\ZZ^2$.
	\end{remark}
	
	We will need a reverse statement as well:	
	\begin{lemma}
		\label{lemma:JumpImpliesFacet}
		Let $K\in \K(\RR^n)$ and $x\in\RR^n$. If there exists $u\in(\RR^n)^\star\setminus\{0\}$ such that for any $v\in\RR^n$ we have
		\begin{enumerate}
			\item\label{it:jumpImpFacNorm} If $\pair{u}{v} = 0$, then $\Delta_v \tcount_K(x) = 0$ and
			\item\label{it:jumpImpFacTrans} If $\pair{u}{v} > 0$, then $\Delta_v \tcount_K(x) > 0$, 
		\end{enumerate}
		then, $u$ is the outer normal vector of a facet $K_u$ of $K$. Moreover, $K_u$ contains a point $x'$ such that $x-x' \in \ZZ^n$.
	\end{lemma}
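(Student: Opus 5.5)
The plan is to reduce everything to the local structure of $K$ at the finitely many lattice points of $K-x$, using \autoref{lemma:tangentcones}. Put $Z=(K-x)\cap\ZZ^n$. By \autoref{lemma:tangentcones} applied to $K-x$, for every $v\in\RR^n$ we have
\[
\Delta_v\tcount_K(x)=\bigl|\{z\in Z\colon v\notin\tangent(z,K-x)\}\bigr| .
\]
Hypothesis \eqref{it:jumpImpFacNorm} then says that $u^\perp=\{w\colon\pair{u}{w}=0\}\subseteq\tangent(z,K-x)$ for every $z\in Z$. Hypothesis \eqref{it:jumpImpFacTrans} says that for every $v$ with $\pair{u}{v}>0$ there is some $z\in Z$ with $v\notin\tangent(z,K-x)$. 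In particular $Z\neq\emptyset$: fix one $v_0$ with $\pair{u}{v_0}>0$ and a corresponding $z\in Z$.

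Next I classify convex cones containing the hyperplane $u^\perp$. The tangent cone here is $\pos\{y-z\}$, which is not necessarily closed, and this is exactly what we need. Let $C\supseteq u^\perp$ be a convex cone.
\begin{itemize}
\item If $C$ contains some $w$ with $\pair{u}{w}<0$, then $C\supseteq\{\pair{u}{\cdot}\le 0\}$, since every such vector is $\lambda w+h$ with $\lambda\ge 0$ and $h\in u^\perp$.
\item The analogous statement holds for a vector with $\pair{u}{w}>0$.
\end{itemize}
So $C$ is one of $u^\perp$, $\{\pair{u}{\cdot}\le0\}$, $\{\pair{u}{\cdot}\ge0\}$ or $\RR^n$. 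For our chosen $z$ we have $v_0\notin C=\tangent(z,K-x)$, which excludes the last two options. Hence $\tangent(z,K-x)\subseteq\{\pair{u}{\cdot}\le 0\}$. This says $\pair{u}{y-z}\le 0$ for all $y\in K-x$, i.e.\ $u$ lies in the normal cone and $z+x\in K_u$.

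It remains to see that $K_u$ is $(n-1)$-dimensional. This is the step where one must be careful, because for general convex bodies a one-dimensional normal cone does not force a facet (think of smooth bodies). Here the non-closedness of $\pos$ saves us. Choose a basis $e_1,\dots,e_{n-1}$ of $u^\perp$. Since $\pm e_i\in\tangent(z,K-x)$, there are $\lambda_i>0$ with $z\pm\lambda_i e_i\in K-x$. By convexity, $K-x$ then contains the $(n-1)$-dimensional cross-polytope $\operatorname{conv}\{z\pm\lambda_ie_i\}$, which lies in $z+u^\perp$. Because $\pair{u}{\cdot}$ is maximized over $K-x$ at $z$, this cross-polytope lies in $(K-x)_u=K_u-x$. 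Therefore $\dim K_u\ge n-1$. Since $u\neq 0$ and $K_u$ lies in a hyperplane, $\dim K_u=n-1$, so $K_u$ is a facet with outer normal $u$.

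Finally, set $x'=z+x\in K_u$. Then $x-x'=-z\in\ZZ^n$, as required. The main obstacle is the facet-dimension step, and it is handled by using that hypothesis \eqref{it:jumpImpFacNorm} gives genuine (non-limiting) segments in all directions of $u^\perp$ at $z$.
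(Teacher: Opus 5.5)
Your proof is correct and follows essentially the same route as the paper: you translate the jump conditions into tangent-cone membership via \autoref{lemma:tangentcones}, use the first hypothesis to get $u^\perp\subseteq\tangent(z,K-x)$ for every lattice point $z$ of $K-x$, and use the second hypothesis to find a lattice point whose tangent cone misses some $v$ with $\pair{u}{v}>0$; this rules out $K_{-u}$ and places $z+x$ in $K_u$. Your classification of cones containing $u^\perp$ and your cross-polytope argument for $\dim K_u=n-1$ spell out the step the paper only asserts, and they correctly rely on $\tangent$ being the non-closed positive hull, which is the reading \autoref{lemma:tangentcones} requires.
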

	
	\begin{proof}
		By \autoref{lemma:tangentcones}, the value of $\Delta_v \tcount_K(x)$ is the number of lattice points of $K-x$ that do not contain $v$ in their tangent cone. Hence, it follows from \eqref{it:jumpImpFacNorm} that $u^\perp$ is contained in the tangent cone of all boundary lattice points of $K-x$. Thus, $K-x$ can have boundary lattice points only on $\relint(K_{u}-x)$ and $\relint(K_{-u}-x)$ if either of them is a facet.
		
		Moreover, by \eqref{it:jumpImpFacTrans}, there exists a lattice point $z \in (K-x) \cap \ZZ^n$ and a vector $v \in \RR^n$ for which $v \notin \Tan(z,K-x)$. By the first step, $z$ must be contained either in $K_u - x$ or $K_{-u} - x$, in which case the corresponding support set must be a facet and $z$ contained in the relative interior. However, if $z \in \relint(K_{-u} - x)$, then $\Tan(z,K-x) = \{w\colon \pair{u}{w} \geq 0\} \ni v$, a contradiction. It follows that $z \in \relint(K_u - x) \cap \ZZ^n$, concluding the proof.
	\end{proof}

	\begin{lemma}
		\label{lem:tcConstIffBd}
		Let $K\in\K(\RR^n)$ and let $x\in\RR^n$. Then $\tcount_K$ is constant in a neighborhood of $x$ if and only if $(x + \ZZ^n) \cap \bd K = \emptyset$.
	\end{lemma}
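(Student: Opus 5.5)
We prove the two implications separately. In both directions we use that $\tcount_K(y)$ counts the lattice points $z$ with $z+y\in K$. Equivalently, it counts the points of $K\cap(y+\ZZ^n)$. Only finitely many lattice points are involved locally: for $y$ in a bounded neighbourhood $U$ of $x$, only lattice points $z$ in the compact set $K-\cl U$ can contribute, and this set contains finitely many of them.

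\emph{($\Leftarrow$).} Assume $(x+\ZZ^n)\cap\bd K=\emptyset$, and let $Z$ be the finite set of lattice points $z$ with $z+x$ within distance $1$ of $K$. For each $z\in Z$ we have $z+x\notin\bd K$. So either $z+x\in\interior K$, or $z+x\notin K$. In both cases there is an $\varepsilon_z>0$ such that the ball $B(z+x,\varepsilon_z)$ is contained in $\interior K$ or in $\RR^n\setminus K$, because both sets are open. Let $\varepsilon<\min\{1,\min_{z\in Z}\varepsilon_z\}$. For $|y-x|<\varepsilon$, a lattice point $z\notin Z$ satisfies $z+y\notin K$. A point $z\in Z$ satisfies $z+y\in K$ if and only if $z+x\in K$. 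Hence $\tcount_K(y)=\tcount_K(x)$ on this ball.

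\emph{($\Rightarrow$).} Assume, for contradiction, that there is some $z\in\ZZ^n$ with $p:=x+z\in\bd K$. We will find points $y$ arbitrarily close to $x$ with $\tcount_K(y)<\tcount_K(x)$.

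Since $p\in\bd K$, there is an outer normal $u\neq 0$ with $\pair{u}{p}=\hf_K(u)$. Choose $v\in\RR^n$ with $\pair{u}{v}>0$. Then $v\notin\tangent(p,K)$, because the tangent cone lies in $\{w:\pair{u}{w}\le 0\}$.

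Now apply \autoref{lemma:tangentcones} to the convex body $K-x$ and the direction $v$. For small $\varepsilon>0$, it gives
\[
\tcount_K(x+\varepsilon v)=\bigl|\{w\in(K-x)\cap\ZZ^n : v\in\tangent(w,K-x)\}\bigr|.
\]
The lattice point $z$ lies in $(K-x)\cap\ZZ^n$, since $z+x=p\in K$. Its tangent cone is $\tangent(z,K-x)=\tangent(p,K)$, which does not contain $v$. So $z$ is excluded from the right-hand side, and therefore $\tcount_K(x+\varepsilon v)\le\tcount_K(x)-1$ for all small $\varepsilon>0$. Thus $\tcount_K$ is not constant near $x$, a contradiction.

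There is no serious obstacle in either direction. The only care needed is to apply \autoref{lemma:tangentcones} to the translate $K-x$ rather than to $K$, and to read off the strict drop from the lattice point $z$ whose tangent cone misses $v$.
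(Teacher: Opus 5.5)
Your proof is correct and takes essentially the same route as the paper. For ($\Leftarrow$) you use a compactness/discreteness argument to show that $(K-y)\cap\ZZ^n$ does not change locally, and for ($\Rightarrow$) you take a lattice point $z$ with $x+z\in\bd K$ and a direction outside its tangent cone, so that \autoref{lemma:tangentcones} forces a strict drop. The only minor difference is bookkeeping: you let \autoref{lemma:tangentcones} show both that $z$ is lost and that no new lattice points enter, and you justify the choice of $v$ via a supporting hyperplane, whereas the paper gets the absence of new points from a separate neighborhood $U'$.
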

	
	\begin{proof}
		Suppose that $(x + \ZZ^n) \cap \bd K = \emptyset$, that is, $K-x$ contains no boundary lattice points. Then, by compactness of $K$ and discreteness of $\ZZ^n$, $\bd(K-x)$ has a positive distance to $\ZZ^n$, and there exists a neighborhood $U$ of $0\in\RR^n$ such that $(K-x-u)\cap\ZZ^n = (K-x)\cap\ZZ^n$ for all $u\in U$. Thus, $\tcount_K(y) =\tcount_K(x)$ for all $y\in x+U$.
		
		Conversely, suppose that $(x+\ZZ^n) \cap \bd K  \neq \emptyset$, that is, there exists $z \in (\bd K-x) \cap \ZZ^n$. Again, by the compactness of $K$ and the discreteness of $\ZZ^n$, there are no lattice points in a neighborhood of $K-x$ but those which are already contained in $K-x$. That is, there exists an open neighborhood $U'$ of $0 \in \RR^n$ such that $(K-x-u)\cap\ZZ^n \subseteq (K-x)\cap \ZZ^n$ for all $u \in U'$.
		
		Pick $u \in U'$ such that $u \notin \Tan(z,K-x)$. Then, by \autoref{lemma:tangentcones}, $z \notin K-x-u$, and, thus, $(K-x-u)\cap\ZZ^n \subsetneq (K-x)\cap \ZZ^n$. We conclude that $\tcount_K(x+u) < \tcount_K(x)$. As $u \in U'$ can be chosen arbitrarily small, it follows that $\tcount_K$ is not constant in a neighborhood of $x$.
	\end{proof}
	
	In the following, we write $\Lambda_z \subset \RR^n/\spann\{z\}$ for the quotient identifying $z_1, z_2 \in \ZZ^n$ if $z_1 - z_2 = c z$, $c \in \ZZ$, is a multiple of $z$. Geometrically, this corresponds (after choosing an Euclidean structure adapted to $\ZZ^n$) to taking the orthogonal projection onto $z^\perp$. Correspondingly, we write $\Proj_z v$ resp.\ $\Proj_z K$ for the image of $v \in \RR^n$ resp.\ $K \in \K(\RR^n)$ under $\RR^n \to \RR^n/\spann\{z\}$.
	
The next lemma already appeared in \cite{HigashitaniEtAl}. We prove it here to keep the paper self-contained.	
	\begin{lemma}
		\label{lemma:proj}
		Let $K\in\K(\RR^n)$. If $z\in\ZZ^n$ is primitive, then
		\begin{align}\label{eq:lemProjTcount}
			\tcount_{\Proj_z K}^{\Lambda_z}(\Proj_z v) = \sum_{0<s\leq 1}\Delta_z \tcount_K(v-sz), \quad v \in \RR^n.
		\end{align}
	\end{lemma}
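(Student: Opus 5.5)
The plan is to prove \eqref{eq:lemProjTcount} by matching both sides with the set of lattice lines parallel to $z$ that meet $K-v$. Since $z$ is primitive, the image of $\ZZ^n$ under $\RR^n\to\RR^n/\spann\{z\}$ is exactly $\Lambda_z$. Its points are in bijection with the lines $L = w+\RR z$, $w\in\ZZ^n$, where two lattice points determine the same line if and only if they differ by an integer multiple of $z$.

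For the left-hand side, a point $\Proj_z w\in\Lambda_z$ lies in $\Proj_z K-\Proj_z v=\Proj_z(K-v)$ if and only if the line $w+\RR z$ meets $K-v$. Hence $\tcount^{\Lambda_z}_{\Proj_z K}(\Proj_z v)$ equals the number of lattice lines parallel to $z$ that intersect $K-v$. By compactness and convexity of $K$, each such intersection is a segment $\{w+tz : a\le t\le b\}$ with $a\le b$, and it has a well-defined top endpoint $w+bz$. The left-hand side therefore counts these top endpoints, one for each intersecting line.

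For the right-hand side, I apply \autoref{lemma:tangentcones} to the body $K-v+sz$ in direction $z$. This shows that $\Delta_z\tcount_K(v-sz)$ is the number of lattice points $y\in K-v+sz$ with $z\notin\tangent(y,K-v+sz)$. Equivalently, these are the lattice points $y$ with $y\in K-v+sz$ but $y+\varepsilon z\notin K-v+sz$ for all small $\varepsilon>0$. So $y-sz$ must be the top endpoint of $(K-v)\cap(y+\RR z)$.

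Writing $y=w+kz$ with $k\in\ZZ$ and a fixed representative $w$ of the line, this condition reads $k-s=b$, where $b$ is the top parameter of that line's segment. For each intersecting line, the number $b$ has exactly one decomposition $b=k-s$ with $k\in\ZZ$ and $s\in(0,1]$, namely $k=\lceil b\rceil$ if $b\notin\ZZ$, and $k=b+1$, $s=1$ if $b\in\ZZ$. Thus each line meeting $K-v$ contributes exactly one pair $(s,y)$ to $\sum_{0<s\le1}\Delta_z\tcount_K(v-sz)$, and lines not meeting $K-v$ contribute nothing. In particular, only finitely many summands are nonzero, so the sum is well defined, and the two sides agree.

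The only delicate step is the bookkeeping that each line is counted exactly once. On the right-hand side, all lattice points $y\in\ZZ^n$ are counted, not classes modulo $z$. So I must check that for a fixed line the lattice points $w+kz$ with different $k$ correspond to different values of $s$, and that only one of them satisfies $k-s=b$ with $s\in(0,1]$; the half-open interval $(0,1]$ is what guarantees this. The condition "$y+\varepsilon z\notin K-v+sz$ for small $\varepsilon$" from \autoref{lemma:tangentcones} characterizes the top endpoint precisely, including when the segment degenerates to a point ($a=b$).
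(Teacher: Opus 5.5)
Your proof is correct and follows essentially the same route as the paper: both use \autoref{lemma:tangentcones} to identify $\Delta_z\tcount_K(v-sz)$ with the lattice points on the $z$-upper boundary of $K-v$ translated by $sz$, and both count exactly one such point for each lattice line parallel to $z$ that meets $K-v$. The paper packages this per-line count as $|((K-v)+[0,z])\cap\ZZ^n|-|(K-v)\cap\ZZ^n|$. You instead make explicit the unique decomposition $b=k-s$ with $k\in\ZZ$ and $s\in(0,1]$, which supplies the justification the paper leaves implicit.
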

	Let us point out that by $\ZZ^n$-periodicity of $\tcount_K$, the right-hand side of \eqref{eq:lemProjTcount} only depends on $\Proj_z v$.
	
	\begin{figure}
		\centering
			\begin{tikzpicture}[
		>=Latex,
		line cap=round,
		line join=round,
		font=\small
		]
		
		\begin{scope}[x=1cm,y=1cm]
			
			\draw[step=1, gray!30, very thin] (-.5,-.5) grid (3.6,3.4);
			
			\foreach \x in {0,1,2,3}
			\foreach \y in {0,1,2,3}
			\fill[black!40] (\x,\y) circle (1.1pt);
			
			\filldraw[
			fill=green!18,
			draw=green!45!black,
			thick
			] (0,0) -- (3,0) -- (3,1) -- (2,3) -- (0,2) -- cycle;
			
			\filldraw[
			fill=blue!17,
			draw=blue!70!black,
			very thick
			] (0,0) -- (3,0) -- (2,2) -- (0,1) -- cycle;
			
			\node[blue!70!black] at (1.35,.55) {$K$};
			\node[green!45!black] at (-1,1.75) {$K+[0,z]$};
			
			\draw[teal!75!black, very thick]
			(3,0) -- (2,2) -- (0,1);
			\node[teal!75!black, anchor=west] at (3.12,.15)
			{\scriptsize $\operatorname{bd}_z(K)$};
			
			\draw[orange!85!black, densely dashed, very thick]
			(3,.5) -- (2,2.5) -- (0,1.5);
			\node[orange!85!black, anchor=west] at (3.12,.65)
			{\scriptsize $\operatorname{bd}_z(K)+\frac12z$};
			
			\draw[red!75!black, densely dashed, very thick]
			(3,1) -- (2,3) -- (0,2);
			\node[red!75!black, anchor=west] at (3.12,1.15)
			{\scriptsize $\operatorname{bd}_z(K)+z$};
			
			\foreach \x/\y in {0/2,1/2,2/3,3/1}
			\filldraw[fill=red!80!black, draw=white, line width=.35pt]
			(\x,\y) circle (2.45pt);
			
			\draw[-{Latex[length=2mm]}, very thick]
			(-0.55,.10) -- (-0.55,1.10)
			node[midway,left] {$z$};
			
		\end{scope}
		
		\begin{scope}[shift={(0cm,-1cm)}, x=1cm, y=1cm]
			
			\draw[-{Latex[length=2mm]}] (-.5,0) -- (3.8,0)
			node[right] {$\mathbb{R}^2/\operatorname{span}(z)$};
			
			\draw[blue!70!black, line width=2.3pt] (0,0) -- (3,0);
			
			\foreach \x in {0,1,2,3}
			\filldraw[fill=red!80!black, draw=white, line width=.35pt]
			(\x,0) circle (2.45pt);
			
			
			
%
		\end{scope}
		
	\end{tikzpicture}
		\caption{Illustration to \autoref{lemma:proj}.}
		\label{fig:projection-jump-identity}

	\end{figure}
	\begin{proof}
		As $Z$ is assumed to be primitive, there are no lattice points in $(0,z)$. Then, for any $v \in \RR^n$ one has 
		\[
		\tcount_{\Proj_z K}^{\Lambda_z} (\Proj_z v) = |(\Proj_z K-\Proj_z v) \cap \Lambda_z| = |((K-v) + [0,z]) \cap \ZZ^n| - |(K-v) \cap \ZZ^n|.
		\]
		Let $\bd_z(K)$ be the upper boundary of $K$ with respect to $z$, that is, the set of all boundary points of $K$ that have an outer normal vector $u$ with $\pair{u}{z} > 0$, or equivalently, all boundary points $x$ with $z \notin \Tan(K,x)$. Then,
		\[
		|((K-v) + [0,z])\cap \ZZ^n| - |(K-v) \cap \ZZ^n| = \sum_{0<s\leq 1} |(\bd_z(K) -v +sz) \cap \ZZ^n|.
		\]
		Note that only finitely many summands are non-zero on the right-hand side. For any $s\in (0,1]$, by \autoref{lemma:tangentcones}, we have
		\[
		(\bd_z(K) -v +sz) \cap \ZZ^n = \left((K -v +sz) \cap \ZZ^n\right) \setminus \left((K -v +(s-\varepsilon)z) \cap \ZZ^n\right)
		\]
		for all $\varepsilon > 0$ small enough. Consequently,
		\[
		|(\bd_z(K) -v +sz) \cap \ZZ^n| = \tcount_K(v-sz) - \lim\limits_{\varepsilon \searrow 0} \tcount_K(v-sz+\varepsilon z) = \Delta_z \tcount_K(v-sz),
		\]
		and the claim follows.
	\end{proof}
	
	\subsection{Translation counts in $\RR^1$}

	It is an instructive exercise to calculate the translation counts for intervals.
	\begin{proposition}\label{prop:tcInR1}
		Let $a < b \in \RR$. Then, for $\{a\} < \{b\}$,
		\begin{align*}
			\tcount_{[a,b]}(v) = \begin{cases}
				\lfloor b-a \rfloor &, v < \{a\},\\
				\lfloor b-a \rfloor + 1 &, \{a\} \leq v \leq \{b\},\\
				\lfloor b-a \rfloor &, \{b\} < v,
			\end{cases} \quad v \in [0,1),
		\end{align*}
		and for $\{a\} > \{b\}$,
		\begin{align*}
			\tcount_{[a,b]}(v) = \begin{cases}
				\lfloor b-a \rfloor + 1 &, v < \{b\},\\
				\lfloor b-a \rfloor &, \{b\} \leq v \leq \{a\},\\
				\lfloor b-a \rfloor + 1 &, \{a\} < v,
			\end{cases} \quad v \in [0,1),
		\end{align*}
		and for $b-a \in \ZZ_{\geq 0}$,
		\begin{align*}
			\tcount_{[a,b]}(v) = \begin{cases}
				b-a+1 & , v = \{a\},\\
				b-a   & , v \neq \{a\}, 
			\end{cases} \quad v \in [0,1).
		\end{align*}
		
		In particular, if $P$ is a lattice polytope in $\RR^1$, then $t_P(v) = \vol_1(P)$ for all $v \in \RR\setminus\ZZ$.
	\end{proposition}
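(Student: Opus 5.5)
The plan is a direct count. For $v\in[0,1)$ we have $\tcount_{[a,b]}(v)=|[a-v,b-v]\cap\ZZ|$, and this number equals $\lfloor b-v\rfloor-\lceil a-v\rceil+1$. Write $a=\lfloor a\rfloor+\{a\}$ and $b=\lfloor b\rfloor+\{b\}$. Since $v\in[0,1)$, the only thing that matters is how $v$ compares with $\{a\}$ and $\{b\}$. We get $\lfloor b-v\rfloor=\lfloor b\rfloor$ if $v\le\{b\}$, and $\lfloor b-v\rfloor=\lfloor b\rfloor-1$ if $v>\{b\}$. Similarly, $\lceil a-v\rceil=\lfloor a\rfloor$ if $v\ge\{a\}$, and $\lceil a-v\rceil=\lfloor a\rfloor+1$ if $v<\{a\}$. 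Putting these together,
\[
\tcount_{[a,b]}(v)=\lfloor b\rfloor-\lfloor a\rfloor+1-\eins[v>\{b\}]-\eins[v<\{a\}].
\]

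Next we express $\lfloor b\rfloor-\lfloor a\rfloor$ through $\lfloor b-a\rfloor$. If $\{a\}\le\{b\}$, then $\lfloor b-a\rfloor=\lfloor b\rfloor-\lfloor a\rfloor$. If $\{a\}>\{b\}$, then $\lfloor b-a\rfloor=\lfloor b\rfloor-\lfloor a\rfloor-1$.

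In the case $\{a\}<\{b\}$, the two indicators are never both $1$. The count is therefore $\lfloor b-a\rfloor+1$ exactly when $\{a\}\le v\le\{b\}$, and $\lfloor b-a\rfloor$ otherwise, which is the first display.

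In the case $\{a\}>\{b\}$, the count is $\lfloor b-a\rfloor+2-\eins[v>\{b\}]-\eins[v<\{a\}]$. For $v<\{b\}$ only the second indicator is $1$, giving $\lfloor b-a\rfloor+1$. For $\{b\}\le v\le\{a\}$ (with the endpoints checked individually), both indicators are $1$ except at the endpoints, where exactly one of them is $1$. This needs care: at $v=\{b\}$ the count is $\lfloor b-a\rfloor+1$, which seems to conflict with the stated "$\leq$". I would check this boundary convention carefully, since the closedness of the interval determines which endpoint is included. In that case the displayed middle range should be $\{b\}<v<\{a\}$. For $v>\{a\}$ only the first indicator is $1$, giving $\lfloor b-a\rfloor+1$. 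Fixing these endpoint conventions is the only real obstacle.

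In the case $b-a\in\ZZ$, we have $\{a\}=\{b\}$. The formula then gives $b-a+1$ at $v=\{a\}$, because both indicators are $0$. For any other $v$ exactly one indicator is $1$, giving $b-a$.

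Finally, for a lattice interval $P=[a,b]$ with $a,b\in\ZZ$, we have $\{a\}=0$. The third case then gives $\tcount_P(v)=b-a=\vol_1(P)$ for $v\notin\ZZ$.
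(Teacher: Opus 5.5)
Your argument is correct and is the natural one. The paper gives no proof of this proposition; it is offered as an ``instructive exercise''. Your formula $\tcount_{[a,b]}(v)=\lfloor b\rfloor-\lfloor a\rfloor+1-\eins[v>\{b\}]-\eins[v<\{a\}]$, combined with $\lfloor b-a\rfloor=\lfloor b\rfloor-\lfloor a\rfloor-\eins[\{a\}>\{b\}]$, settles all three cases at once.

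\textbf{The endpoint issue.} Your worry is justified, and you should state it as a correction to the statement, not as a convention still to be checked. The second display is false at $v=\{b\}$ and at $v=\{a\}$, exactly as your indicator computation shows. For $\{a\}>\{b\}$ the correct statement is
\[
\tcount_{[a,b]}(v)=\begin{cases}
\lfloor b-a\rfloor+1, & v\le\{b\} \text{ or } v\ge\{a\},\\
\lfloor b-a\rfloor, & \{b\}<v<\{a\},
\end{cases}\qquad v\in[0,1).
\]
A concrete check: $a=0.7$, $b=1.2$ gives $\lfloor b-a\rfloor=0$. Yet $\tcount_{[a,b]}(0.2)=|[0.5,1]\cap\ZZ|=1$ and $\tcount_{[a,b]}(0.7)=|[0,0.5]\cap\ZZ|=1$.

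There is also a structural reason. $\tcount_K=\sum_{z\in\ZZ^n}\eins_{K-z}$ is a locally finite sum of indicators of closed sets, hence upper semicontinuous. So the set where $\tcount_{[a,b]}$ takes its larger value must be closed in $\torus{1}$. The printed second display makes that set an open arc, whereas the first display correctly uses a closed interval.

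\textbf{The ``in particular'' clause.} The correction does not affect it, since it only uses the third case. Two small points to add there. First, $\ZZ$-periodicity is what passes from $v\in(0,1)$ to all $v\in\RR\setminus\ZZ$. Second, the hypothesis $a<b$ excludes one-point lattice polytopes, but your count formula $\lfloor b-v\rfloor-\lceil a-v\rceil+1$ remains valid for $a=b$ and gives $0=\vol_1$ there.
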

	
	\begin{remark}\label{rem:injInR1}
		It is direct from \autoref{prop:tcInR1} that if $\tcount_K = \tcount_M$ for $K, M \in \K(\RR)$, then $K = M + z$ for some $z \in \ZZ$.
	\end{remark}

	\section{Uniqueness}\label{sec:inj}
	In this section, we prove \autoref{mthm:inj}, first in the planar case, then by induction for all dimensions.
	\subsection{The planar case}	
	The goal of this section is to prove the uniqueness in the plane.
	
	\begin{proposition}
		\label{prop:planar}
		Let $K,M\in\K(\RR^2)$. If $\tcount_K=\tcount_M$, then there exists $z\in\ZZ^2$ such that $K = M-z$.
	\end{proposition}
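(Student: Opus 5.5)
The plan is to recover from $\tcount_K$ all facets (edges) of $K$, together with their positions modulo $\ZZ^2$, and then to pin down the remaining, strictly convex, part of the boundary through the projection formula of \autoref{lemma:proj}.

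\textbf{Step 1: projections.} For every primitive $z\in\ZZ^2$, \autoref{lemma:proj} expresses the one-dimensional translation count of $\Proj_z K$ with respect to the lattice $\Lambda_z\cong\ZZ$ purely in terms of jumps of $\tcount_K$. Hence $\tcount_{\Proj_z K}=\tcount_{\Proj_z M}$. By \autoref{rem:injInR1}, $\Proj_z K$ and $\Proj_z M$ then agree up to a translation in $\Lambda_z$. Concretely, this determines the width $\hf_K(u)+\hf_K(-u)$ for the primitive dual vector $u\perp z$, and it determines the support values $\hf_K(\pm u)$ modulo $\ZZ$ (that is, modulo $u(\ZZ^2)=\ZZ$). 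So $\hf_K(u)-\hf_M(u)\in\ZZ$ for every rational direction $u$.

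\textbf{Step 2: edges.} Next I will use \autoref{lemma:FacetImpliesJumps} and \autoref{lemma:JumpImpliesFacet} to read off from $\tcount_K$ which rational directions $u$ are normals of edges. The nonrational edge normals are handled by Step 3, since such an edge cannot be distinguished from a strictly convex arc using only lattice data in finitely many directions. For a rational edge normal $u$, \autoref{lemma:FacetImpliesJumps} gives $\tcount_{K_u}$ on almost all of $\aff(K_u)/\ZZ^2$, hence everywhere there by semicontinuity. The $1$-dimensional case then yields $K_u=M_u+z_u$ with $z_u\in\ZZ^2$.

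\textbf{Step 3: a single integer translation.} The main obstacle is to show that the translations can be chosen consistently, i.e.\ that a single $z\in\ZZ^2$ works for all $u$. Step 1 only gives, for each $u$, an integer $k_u=\hf_K(u)-\hf_M(u)$. I would argue as follows. Replace $M$ by $M+z$, where $z$ is chosen so that $k_{e_1^\star}=k_{e_2^\star}=0$. The function $u\mapsto k_u$ is then the restriction of $\hf_K-\hf_M$, a difference of support functions, hence continuous on the circle of directions. It is integer-valued on the dense set of rational directions, and on each rational line $\{su\}$ it scales with $s$. My aim is to show that $\hf_K-\hf_M$ is linear on each cone spanned by two consecutive primitive vectors of a unimodular basis. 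Since it vanishes at $e_1^\star,e_2^\star$, it would then vanish identically.

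To get there, consider primitive $u_1,u_2$ forming a unimodular basis with $u=u_1+u_2$. I expect to use continuity together with the integrality of $k$ on the denser and denser Farey-type sequences $(pu_1+qu_2)/\gcd$. Since $|k_u|$ is bounded by a constant times $|u|$ while $k$ is integer-valued on primitive vectors, a continuity argument on normalized directions $u/|u|$ forces $k_u/|u|\to$ the values obtained by linear interpolation. The linear function $\ell$ with $\ell(u_i)=k_{u_i}$ should then satisfy $k=\ell$. This is the delicate point: integrality alone gives only $k_u\in\ZZ$, so I would supplement it with the count information from Step 1. Since $\tcount_K$ determines $\tcount_{\Proj_z K}$ exactly, and not just up to translation, the same lattice points in translates must be counted simultaneously for $z$ and the neighboring directions. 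This should rule out any nonlinear integer jumps. With $\hf_K=\hf_M$, we obtain $K=M$ after the translation.
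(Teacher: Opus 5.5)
\textbf{There is a genuine gap: your Step~3 rests on a false claim.} The data it uses cannot determine $K$ up to integer translation. Everything you feed into it also holds for $K=\mathrm{conv}\{0,e_1,e_2\}$ and $M=(e_1+e_2)-K=\mathrm{conv}\{e_1,e_2,e_1+e_2\}$. That data is: $\hf_K(u)-\hf_M(u)\in\ZZ$ for primitive $u$, equal widths, the equalities $\tcount_{\Proj_z K}=\tcount_{\Proj_z M}$, and continuity.

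Indeed, every projection $\Proj_z K$, $\Proj_z M$ is a lattice segment. The two have the same length, since $K$ and $-K$ have the same widths. So by \autoref{prop:tcInR1} all projected counts agree. Yet $\hf_K-\hf_M$, which is integer-valued on $(\ZZ^2)^\star$ and vanishes at $e_1^\star,e_2^\star$, equals $-\min(u_1,u_2)$ on the unimodular cone spanned by $e_1^\star,e_2^\star$. It is not linear there, and $K\neq M+z$ for every $z\in\ZZ^2$.

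So continuous, integer-valued, $1$-homogeneous functions can have kinks at rational directions. Your Farey/interpolation heuristic fails, and ``supplementing with the count information from Step~1'' adds nothing. Of course $\tcount_K\neq\tcount_M$ in this example, but the difference is visible only in the jumps: $K$ has an edge with normal $(1,1)$, and $M$ does not.

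Your Step~3 never uses the edge data of Step~2. You also discard the irrational edges on the false premise that they cannot be detected. \autoref{lemma:JumpImpliesFacet} holds for every $u$, rational or not. For irrational $u$ the map $\pi|_{K_u}$ is injective, so $\pi(K_u)=\pi(M_u)$ already gives equal lengths.

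A smaller issue: the ``semicontinuity'' step in Step~2 does not recover $\tcount_{K_u}$ everywhere when $K_u$ has integral lattice length. For example, $[0,1]$ and $[\tfrac12,\tfrac32]$ have $1$-dimensional counts that agree off a null set. Almost-everywhere data only give the length, via \eqref{eq:volIsIntegral}.

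\textbf{How the paper proceeds.} The paper avoids this by reconstructing the whole surface area measure. It matches edges in all directions together with their lengths. It matches the curved part via $\pi(\bd K)=\pi(\bd M)$ (\autoref{lem:tcConstIffBd}) together with \autoref{lemma:strict1} and \autoref{lemma:strict2}. Then \autoref{thm:Minkowski} gives $K=M+v$. Only at the end are projections used, as in your Step~1, to get $v\in\ZZ^2$.

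\textbf{How your route could be repaired.} Your support-function idea can be made to work, but only with the ingredient you dropped. Suppose $K_u$ and $M_u$ are known to have equal length for every direction $u$. Then $g=\hf_K-\hf_M$ is $C^1$ away from the origin, because the jumps of the angular derivative of a planar support function are exactly the edge lengths. Take primitive $b$ and $a$ with $\{a,b\}$ a basis of $(\ZZ^2)^\star$. The integers $g(a+nb)=n\,g(b)+\pair{a}{\nabla g(b)}+o(1)$, together with $\pair{b}{\nabla g(b)}=g(b)$, force $\nabla g(b)\in\ZZ^2$. A continuous map on the circle of directions that is $\ZZ^2$-valued on a dense set is constant. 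Hence $g=\pair{\cdot}{z}$ with $z\in\ZZ^2$, and $K=M+z$. This would bypass the analysis of the strictly convex arcs. Without equal edge lengths in all directions, however, the argument cannot start; that hypothesis is exactly what fails in the triangle example.
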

	
	We require two technical lemmas concerning the ``strictly convex arcs'' in the boundary curve of $K$. We denote by $\pi: \RR^2 \to \torus{2}$ the canonical projection and by $\pi_z = \pi|_{z + [0,1)^2}$, $z \in \ZZ^2$ its restriction to a fundamental cell $z+[0,1)^2$. Clearly, $\pi_z$ is a continuous bijection. 
	
	\begin{lemma}
		\label{lemma:strict1}
		Let $K\in\K(\RR^2)$ and let $R\subset\bd K$ be a regular strictly convex curve. Then, the set of points in $\pi(R) \subset \torus{2}$ that have more than one preimage in $\bd K$ is contained in
		\begin{align}\label{eq:StrictCurvePreim}
			\bigcup_{z,z'} \bigg[(z+[0,1)^2) \cap R-z\bigg] \cap \bigg[(z'+[0,1)^2) \cap \bd K -z'\bigg],
		\end{align}
		where the union ranges over all pairs of distinct integer vectors $z \neq z' \in \ZZ^2$. In particular, only finitely many points in $\pi(R)$ have a non-unique preimage in $\bd K$.
	\end{lemma}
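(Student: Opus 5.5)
The containment in \eqref{eq:StrictCurvePreim} is essentially a matter of unwinding definitions, so I will do that first and then spend the real effort on finiteness.

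\emph{Step 1 (the containment).} Let $p\in\pi(R)$ have two distinct preimages $x,y\in\bd K$, where $x\in R$.

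Since $x\neq y$ and $\pi(x)=\pi(y)$, the difference $x-y$ is a nonzero integer vector. Let $z,z'\in\ZZ^2$ be the unique vectors with $x\in z+[0,1)^2$ and $y\in z'+[0,1)^2$. Then $x-z=y-z'$, because both equal the representative of $p$ in $[0,1)^2$, and consequently $z\neq z'$.

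It follows that $x-z$ lies in $\bigl[(z+[0,1)^2)\cap R\bigr]-z$ and also in $\bigl[(z'+[0,1)^2)\cap\bd K\bigr]-z'$. Identifying $[0,1)^2$ with $\torus{2}$ via $\pi_0$, this is exactly the statement that $p$ lies in the union \eqref{eq:StrictCurvePreim}.

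\emph{Step 2 (finitely many pairs).} The union in \eqref{eq:StrictCurvePreim} is effectively finite. Since $K$ is compact, only finitely many cells $z+[0,1)^2$ meet $\bd K$, and all other pairs $(z,z')$ contribute the empty set. So it suffices to show that each single term $(R-w)\cap\bd K$ is finite, where $w=z-z'\neq 0$.

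\emph{Step 3 (finiteness of one intersection; the main obstacle).} I must show that a translate $R-w$, $w\neq 0$, of a regular strictly convex arc meets the convex curve $\bd K$ in only finitely many points. Both $K$ and $K-w$ are convex bodies and $R-w\subset\bd(K-w)$, so the question becomes: where can $\bd K$ and $\bd(K-w)$ meet along a strictly convex piece?

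I would use the standard fact that the boundaries of a planar convex body and a translate of it intersect either in at most two points or in segments parallel to $w$. Here is how I would argue it. Choose coordinates with $w$ vertical, and describe $\bd K$ as the union of an upper graph $g^+$ and a lower graph $g^-$ over a common interval $I$.

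\begin{itemize}
\item An intersection point of $\bd K$ with $\bd(K-w)$ corresponds to a solution of one of $g^\pm(t)=g^\pm(t)-|w|$, which is impossible, or of $g^+(t)=g^-(t)\pm|w|$ (roughly, after accounting for horizontal shifts in the general case).
\item The function $g^+-g^-$ is concave. Such a function attains a given value on at most two points or on an interval.
\item On an interval the arc would contain a segment, which contradicts strict convexity of $R$.
\end{itemize}

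If $w$ is not vertical, I would shift coordinates so that it is, since the argument only uses the direction of $w$. Accounting for the horizontal shift of the interval $I$ when passing to $K-w$ is where I expect the bookkeeping to be most delicate.

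A more robust fallback is a local argument. Suppose infinitely many intersection points accumulate at some $q\in R-w$. Near $q$, both curves are graphs of convex functions over the same tangent direction. One of them is strictly convex, and the other is a translate of a function with the same shape. Comparing slopes at consecutive intersection points, via the mean value theorem applied to the difference of the two functions, forces the strictly convex arc to contain a segment, which is a contradiction.

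Combining Steps 1–3, the set of points of $\pi(R)$ with a non-unique preimage in $\bd K$ is contained in a finite union of finite sets, and is therefore finite.
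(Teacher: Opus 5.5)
Your Steps 1 and 2 match the paper's argument. Distinct preimages lie in distinct cells, only finitely many pairs of cells meet $\bd K$, and each term of the union lies in a translate of $(R-w)\cap\bd K$ with $w=z-z'\neq 0$.

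The difference is in how you show this last set is finite. The paper argues locally: a regular strictly convex piece of $R$ can meet a nonzero translate of another boundary arc only transversally, or tangentially at its endpoints, and hence only finitely often. You argue globally through the classical structure of $\bd K\cap\bd(K-w)$, and this works once tidied:
\begin{itemize}
\item Take $w=(0,c)$ with $c>0$. Then $K-w$ has upper and lower graphs $g^\pm-c$ over the \emph{same} interval $I$, so the horizontal shift you worry about never occurs.
\item Over $\interior I$ the only possible coincidence is $g^-=g^+-c$. This is a level set of the concave function $g^+-g^-$, so it is at most two points, or an interval $J$ on which $g^-$ is both convex and concave, i.e.\ affine.
\item You omitted the two vertical edges of $\bd K$ over the endpoints of $I$. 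Including them, $\bd K\cap\bd(K-w)$ is covered by at most two points and three segments.
\item A connected arc of $\bd(K-w)$ containing no nondegenerate segment meets any line in at most two points, which gives finiteness.
\end{itemize}
Your sentence ``on an interval the arc would contain a segment, which contradicts strict convexity of $R$'' should be restated. The segment over $J$ lies in $\bd K\cap\bd(K-w)$, not necessarily in $R-w$. The contradiction comes only from $R-w$ sharing infinitely many points with it.

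Compared with the paper, your route needs no regularity of $R$. It also needs no separate discussion of intersection points accumulating at an endpoint of $R$ or at a corner of $\bd K$, which the paper's ``transversal or tangent'' dichotomy passes over. The paper's version is shorter but leaves exactly these points implicit.

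Your fallback local argument is unnecessary, and as phrased it would not suffice. A strictly convex graph and an arbitrary convex graph can meet infinitely often; think of a polygon inscribed at nodes accumulating at a point. The argument only works once you use that both curves are pieces of $\bd K$. Then a common slope produces two distinct points of $\bd K$ with the same outer normal, hence an edge of $K$ running into $R$.
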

	
	\begin{figure}
		\definecolor{Rred}{RGB}{196,55,55}
\definecolor{Bblue}{RGB}{45,105,180}
\definecolor{Kgray}{RGB}{232,234,238}
\begin{tikzpicture}[
	>={Latex},
	every node/.style={font=\small},
	redcurve/.style={
		draw=Rred,
		line width=2.5pt,
		line cap=round
	},
	bluecurve/.style={
		draw=Bblue,
		line width=2.1pt,
		line cap=round
	},
	boundary/.style={
		draw=black!80,
		line width=.9pt
	},
	point/.style={
		circle,
		fill=black,
		draw=white,
		line width=.4pt,
		inner sep=1.7pt
	}
	]
	
	\pgfmathsetmacro{\rad}{sqrt(0.5)}
	
	\begin{scope}[x=2.25cm,y=2.25cm]
		
		\coordinate (C) at (1.2,1.1);
		\coordinate (p) at (.7,1.6);
		\coordinate (q) at (1.7,1.6);
		
		\fill[fill=Rred!9]  (0,1) rectangle (1,2);
		\fill[fill=Bblue!9] (1,1) rectangle (2,2);
		
		\fill[fill=Kgray,fill opacity=.82]
		(C) circle[radius=\rad];
		
		\draw[step=1,draw=gray!50,line width=.35pt]
		(0,0) grid (2,2);
		
		\draw[draw=Rred!45,line width=1pt]
		(0,1) rectangle (1,2);
		\draw[draw=Bblue!45,line width=1pt]
		(1,1) rectangle (2,2);
		
		\draw[-{Latex[length=1.8mm]},gray!75]
		(-.06,0) -- (2.16,0) node[right] {$x_1$};
		\draw[-{Latex[length=1.8mm]},gray!75]
		(0,-.06) -- (0,2.16) node[above] {$x_2$};
		
		\draw[boundary] (C) circle[radius=\rad];
		
		\begin{scope}
			\clip (1,1) rectangle (2,2);
			\draw[bluecurve] (C) circle[radius=\rad];
		\end{scope}
		
		\draw[redcurve]
		($(C)+(118:\rad)$)
		arc[start angle=118,end angle=153,radius=\rad];
		
		
		\node[point] at (p) {};
		\node[anchor=south east] at (p) {$x$};
		
		\node[point] at (q) {};
		\node[anchor=south west] at (q) {$x'$};
		
		
%
		
		\node at (1.2,.91) {$K$};
		
		
		\draw[
		draw=Rred,
		-{Latex[length=1.4mm]},
		thin
		]
		(.40,1.72) node[left,text=Rred] {$R$}
		-- ($(C)+(140:\rad)$);
		
	\end{scope}
	
	\begin{scope}[shift={(7.05cm,.8cm)},x=3.65cm,y=3.65cm]
		
		\coordinate (Cr) at (1.2,.1); 
		\coordinate (Cb) at (.2,.1);  
		\coordinate (u)  at (.7,.6);
		
		\fill[gray!3] (0,0) rectangle (1,1);
		
		\begin{scope}
			\clip (0,0) rectangle (1,1);
			
			\draw[bluecurve] (Cb) circle[radius=\rad];
			
			\draw[redcurve]
			($(Cr)+(118:\rad)$)
			arc[start angle=118,end angle=153,radius=\rad];
		\end{scope}
		
		\draw[very thick]
		(0,1) -- (0,0) -- (1,0);
		\draw[very thick,densely dashed]
		(1,0) -- (1,1) -- (0,1);
		
		
		\node[point] at (u) {};
		\node[
		anchor=west,
		inner sep=1pt
		] at (.75,.6) {$y$};
		
%
		
		\node[anchor=north east] at (0,0) {$0$};
		\node[anchor=north]      at (1,0) {$1$};
		\node[anchor=east]       at (0,1) {$1$};
		
		
		
	\end{scope}
	
	
	\draw[draw=Rred,thick,-{Latex[length=2mm]}]
	(4.95,3.48) -- (6.85,3.48)
	node[midway,above,text=Rred,fill=white,inner sep=1pt] {$-z$};
	
	\draw[draw=Bblue,thick,-{Latex[length=2mm]}]
	(4.95,2.50) -- (6.85,2.50)
	node[midway,below,text=Bblue,fill=white,inner sep=1pt] {$-z'$};
	
	
	
\end{tikzpicture}
		\caption{Illustration to \autoref{lemma:strict1}.}
	\end{figure}

	\begin{proof}
		First note that cells of the form $(z + [0,1)^2)$, $z \in \ZZ^2$, form a tessellation of $\RR^2$ and thereby their intersections with $\bd K$ cover the whole boundary $\bd K$. For the inclusion statement, we therefore only need to show that two distinct preimages $x, x' \in \bd K$ of a point $y \in \pi(R)$ lie in different cells. This is clear, however.
		
		For the second statement, note that, by the compactness of $K$, we have $(z+[0,1)^2) \cap R \neq \emptyset$ and $(z'+[0,1)^2) \cap \bd K \neq \emptyset$ only for finitely many pairs $z,z' \in \ZZ^2$. Moreover, if $z \neq z'$ is a pair that contributes non-trivially to the union in \eqref{eq:StrictCurvePreim}, then both sets are translations of disjoint subcurves of $\bd K$. As $(z+[0,1)^2) \cap R$ is a regular and strictly convex curve, the intersection with a non-zero translation of any other disjoint curve in the boundary must be either transversal or tangent to the start or end point of the curve. In both cases, the intersection is finite, yielding the second claim.
	\end{proof}

	\begin{lemma}
		\label{lemma:strict2}
		Let $K\in \K(\RR^2)$ and let $U\subseteq \torus{2}$ be open, such that $\rho = U \cap \pi(\bd K)$ is a connected, regular and strictly convex curve. Then there exists a partition of $\rho$ into regular strictly convex curves $\rho_1, \dots, \rho_m$ such that $\pi_{z_i}^{-1}(\rho_i)\subseteq \bd K$ for certain vectors $z_i \in \ZZ^2$, $i=1, \dots, m$.
	\end{lemma}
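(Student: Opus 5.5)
The plan is to cut $\rho$ along the edges of the unit square and take as pieces the images of the arcs of $\bd K$ that lie in single fundamental cells. First, since $U\cap\pi(\bd K)=\rho$, every point $y\in\rho$ has at least one preimage in $\bd K$. Using the cells $z+[0,1)^2$, $z\in\ZZ^2$, we write
\[
\rho=\bigcup_{z\in\ZZ^2}\pi\big((z+[0,1)^2)\cap\bd K\big)\cap U .
\]
By compactness of $K$, only finitely many $z$ contribute. For each such $z$, the set $\pi_z^{-1}(\rho)\cap\bd K$ is a closed-in-cell portion of $\bd K$ that maps into $\rho$. Its connected components are arcs of $\bd K$. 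On these arcs $\pi_z$ is injective, and $\rho$ is regular and strictly convex, so the arcs are regular and strictly convex themselves: locally, $\pi_z$ is a translation.

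Second, I show that the cover above has a locally finite structure. Fix a connected component $A$ of $\pi_z^{-1}(\rho)\cap\bd K$. Its image $\pi(A)$ is a subarc of $\rho$, and different components have images that are subarcs of $\rho$ as well. I first want to show that only finitely many such arcs are needed. An arc of a convex curve can meet the boundary lines of the square $z+[0,1)^2$ in only finitely many points, unless it contains a segment of a grid line. That is excluded because $\rho$ is strictly convex, so it contains no segments. Consequently each $(z+[0,1)^2)\cap\bd K\cap\pi^{-1}(U)$ splits into finitely many arcs.

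Third, I turn these arcs into a partition. The images $\pi(A)$ are finitely many subarcs whose union is $\rho$. Two of them may overlap. On an overlap, the point has two preimages on $\bd K$, namely two translated pieces of $\bd K$. By the argument of \autoref{lemma:strict1}, which applies to the strictly convex curve $A\subset\bd K$, overlaps in the interior of arcs are finite, except when two translates of strictly convex arcs coincide along a whole subarc. In that case I simply keep one of them. Now order the endpoints of all the arcs along the connected curve $\rho$; there are finitely many. This subdivides $\rho$ into finitely many consecutive subarcs $\rho_1,\dots,\rho_m$, with endpoints assigned to exactly one neighbour so that the pieces are disjoint. For each $\rho_i$, choose one covering arc $A\subset z_i+[0,1)^2$ with $\rho_i\subseteq\pi(A)$. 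Then $\pi_{z_i}^{-1}(\rho_i)\subseteq A\subseteq\bd K$, and each $\rho_i$ is a subarc of $\rho$, so it is regular and strictly convex.

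The main obstacle is the second step: showing that the covering really consists of finitely many arcs, and that a point of $\rho$ lying on a grid line is still covered within a single cell. The half-open cells handle the latter, because each point lies in exactly one cell. For finiteness, I would argue that a strictly convex arc crosses each of the finitely many relevant grid lines at most twice. I would also handle the degenerate case where $\rho$ is tangent to or ends on a grid line by including those points as subdivision points.
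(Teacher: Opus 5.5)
Your strategy of covering $\rho$ by images of cellwise arcs of $\bd K$, cutting at all endpoints and choosing one covering arc per piece would work if the cover were finite. Your second step, however, does not prove finiteness.

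\textbf{The gap.} The components of $(z+[0,1)^2)\cap\bd K\cap\pi^{-1}(U)$ are delimited by the grid lines and also by the boundary of $\pi^{-1}(U)$. Counting grid-line crossings says nothing about how many times $\bd K$ enters and leaves $\pi^{-1}(U)$. So ``consequently each \dots\ splits into finitely many arcs'' is a non sequitur. Without finiteness, the endpoints in your third step may accumulate, and you do not obtain finitely many pieces $\rho_1,\dots,\rho_m$.

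Your case analysis of overlaps is also off. Two subarcs $\pi(A),\pi(A')$ of the one-dimensional manifold $\rho$ that meet must overlap in an open subarc. Hence the appeal to \autoref{lemma:strict1} is misplaced, and the only possible case is the one you dismiss by ``keeping one''.

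\textbf{The missing idea is a global argument.} The paper shows that every point $x\in\rho$ has a unique preimage on $\bd K$. Suppose $p\neq q$ were two preimages. Small pieces of $\bd K$ around $p$ and $q$ both map into $U\cap\pi(\bd K)=\rho$, so they have the tangent direction of $\rho$ at $x$. This yields an edge of $\bd K$ through $p$ and $q$, whose image is a segment through $x$ lying in $\rho$ near $x$, contradicting strict convexity. Consequently the sets $\pi(\pi_z^{-1}(\rho)\cap\bd K)$, for the finitely many relevant $z$, are pairwise disjoint and already form the partition. No arc bookkeeping is needed.

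\textbf{How to repair your route.} If you prefer to keep your approach, you can close the gap directly:
\begin{enumerate}
\item Take a component $A$ of the open subset $\bd K\cap\pi^{-1}(U)$ of the closed curve $\bd K$. Either $A=\bd K$, or its endpoints lie outside $\pi^{-1}(U)$.
\item $\pi(A)$ is open in $\rho$, by local injectivity of $\pi$ and since $\rho$ is a curve.
\item $\pi(A)$ is closed in $\rho$, since $\overline{\pi(A)}=\pi(\overline A)$ and the endpoints map outside $U$.
\item As $\rho$ is connected, $\pi(A)=\rho$.
\item Since $\pi$ is a local isometry, every component has length at least $\haus^1(\rho)>0$, so there are only finitely many.
\end{enumerate}
Only at that point does your grid-line count give finitely many cellwise arcs.
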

	\begin{figure}
			\begin{tikzpicture}[
		every node/.style={font=\small},
		lift/.style={line width=1.6pt,line cap=round}
		]
		
		
		\fill[blue!8] (2.10,1.60) ellipse [x radius=1.65,y radius=1.12];
		
		\foreach \x in {0,1,2,3,4}
		\draw[gray!45,densely dashed] (\x,0) -- (\x,3);
		\foreach \y in {0,1,2,3}
		\draw[gray!45,densely dashed] (0,\y) -- (4,\y);
		
		\draw[-{Latex[length=1.8mm]},gray!75]
		(-.06,0) -- (4.66,0) node[right] {$x_1$};
		\draw[-{Latex[length=1.8mm]},gray!75]
		(0,-.06) -- (0,3.66) node[above] {$x_2$};
		
		\draw[black!70,thick] (2.10,1.60) ellipse [x radius=1.65,y radius=1.12];
		\node[black!70] at (3.6,0.5) {$\partial K$};
		
		\coordinate (p0) at (0.70,2.193);
		\coordinate (p1) at (1.00,2.435);
		\coordinate (p2) at (2.00,2.718);
		\coordinate (p3) at (3.00,2.539);
		\coordinate (p4) at (3.641,2.000);
		
		\draw[lift,red!80!black]
		(p0) arc[start angle=148.05,end angle=131.81,
		x radius=1.65,y radius=1.12];
		
		\draw[lift,orange!90!black]
		(p1) arc[start angle=131.81,end angle=93.47,
		x radius=1.65,y radius=1.12];
		
		\draw[lift,teal!75!black]
		(p2) arc[start angle=93.47,end angle=56.94,
		x radius=1.65,y radius=1.12];
		
		\draw[lift,violet!85!black]
		(p3) arc[start angle=56.94,end angle=20.92,
		x radius=1.65,y radius=1.12];
		
		\foreach \p in {p1,p2,p3}
		\fill[black] (\p) circle[radius=1.05pt];
		
		
		
		\draw[-{Latex[length=3mm]},very thick] (4.35,1.62) -- (6.50,1.62)
		node[midway,above] {$\pi$};

		\begin{scope}
			\clip (7,0) rectangle (11,4);
			
			\foreach \i in {0,...,3}{
				\foreach \j in {0,...,2}{
					\draw[gray!70,line width=.75pt]
					({7+4*(2.10-\i)},{4*(1.60-\j)})
					ellipse [x radius=6.6,y radius=4.48];
				}
			}	
			
%
%
%

			\draw[lift,red!80!black]
			({15.4+6.6*cos(148.05)},{-1.6+4.48*sin(148.05)})
			arc[start angle=148.05,end angle=131.81,
			x radius=6.6,y radius=4.48];
			
			\draw[lift,orange!90!black]
			({11.4+6.6*cos(131.81)},{-1.6+4.48*sin(131.81)})
			arc[start angle=131.81,end angle=93.47,
			x radius=6.6,y radius=4.48];
			
			\draw[lift,teal!75!black]
			({7.4+6.6*cos(93.47)},{-1.6+4.48*sin(93.47)})
			arc[start angle=93.47,end angle=56.94,
			x radius=6.6,y radius=4.48];
			
			\draw[lift,violet!85!black]
			({3.4+6.6*cos(56.94)},{-1.6+4.48*sin(56.94)})
			arc[start angle=56.94,end angle=20.93,
			x radius=6.6,y radius=4.48];
		\end{scope}
		
		\node[gray!70,fill=white,inner sep=1pt] at (7.85,.38)
		{$\pi(\partial K)$};
		
		\draw[very thick]
		(7,4) -- (7,0) -- (11,0);
		\draw[very thick,densely dashed]
		(11,0) -- (11,4) -- (7,4);
		
		\node[anchor=north east] at (7,0) {$0$};
		\node[anchor=north]      at (11,0) {$1$};
		\node[anchor=east]       at (7,4) {$1$};
		
		
		\node[red!80!black] at (10.05,1.3) {$\rho_1$};
		\node[orange!90!black] at (8.55,2.2) {$\rho_2$};
		\node[teal!75!black] at (7.65,3.30) {$\rho_3$};
		\node[violet!85!black] at (8.2,.82) {$\rho_4$};
		
		
		
	\end{tikzpicture}
		\caption{Illustration to \autoref{lemma:strict2}.}
	\end{figure}
	\begin{proof}
		First note that, by compactness, there exist only finitely many lattice points $z_1, \dots, z_m \in \ZZ^2$ for which $\pi_{z_i}^{-1}(\rho) \cap \bd K \neq \emptyset$, $i=1, \dots, m$. We write $\Gamma_i = (z_i + [0,1)^2) \cap \bd K$ and $\gamma_i = \pi_{z_i}^{-1}(\rho) \cap \bd K \subseteq \Gamma_i$. Clearly, $\rho$ is the union of all $\rho_i = \pi(\gamma_i)$, $i=1, \dots, m$.
		
		Suppose that $\rho_i \cap \rho_j \neq \emptyset$ and choose a point $x \in \rho$ in the intersection. Since $\rho$ is regular, $\rho$ has a unique unit tangent vector $u$ at $x$, which is then also a unit tangent vector of $\gamma_i$ and $\gamma_j$ at $\pi_{z_i}^{-1}(x)$ and $\pi_{z_j}^{-1}(x) \in \bd K$, respectively. consequently, $\bd K$ contains an edge $E \subset \bd K$ parallel to $u$ containing the distinct points $\pi_{z_i}^{-1}(x)$ and $\pi_{z_j}^{-1}(x)$. As $\rho$ is strictly convex and $\pi(E)$ is a line segment in $\pi(\bd K)$ intersecting $U$, this contradicts the assumption that $\rho = U \cap \pi(\bd K)$. Hence, the sets $\rho_i \subseteq \rho$, $i=1, \dots, m$, are disjoint and therefore the claimed partition. 
	\end{proof}
	
	Before proving \autoref{prop:planar}, we recall Minkowski's existence theorem for the surface area measure. For the definition of surface area measure and more details on the theorem, we refer to \cite{Schneider2014}*{Ch.~8}.
	\begin{theorem}[\cite{Schneider2014}*{Thm.~8.2.2}]\label{thm:Minkowski}
		Suppose that $\mu$ is a positive, finite Borel measure on the unit sphere $S^{n-1}$ with center of mass at the origin,
		\begin{align*}
			\int_{S^{n-1}} u\,\mathrm{d}\mu(u) = 0,
		\end{align*}
		and such that $\mu$ is not concentrated on any great subsphere. Then there exists a full-dimensional convex body $K$ for which $S(K,\cdot) = \mu$ is the surface area measure. The body $K$ is unique up to translations.
	\end{theorem}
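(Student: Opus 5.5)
This is the classical theorem quoted from \cite{Schneider2014}. If one wanted to reprove it rather than cite it, I would follow the standard variational route: first handle discrete measures (polytopes), then pass to general $\mu$ by approximation, and finally prove uniqueness via Minkowski's inequality for mixed volumes.

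\emph{Discrete case.} Let $\mu=\sum_{j=1}^k f_j\delta_{u_j}$ with $f_j>0$, $\sum_j f_ju_j=0$, and the $u_j$ not contained in a great subsphere. For $h\in\RR^k_{\ge 0}$ set $P(h)=\{x:\pair{u_j}{x}\le h_j\ \forall j\}$. This set is bounded because the $u_j$ positively span $\RR^n$, which follows from the balancing and non-degeneracy conditions. I would minimize the linear functional $\Phi(h)=\sum_j f_jh_j$ over the set $\{h\ge 0:\vol(P(h))\ge 1\}$.
\begin{itemize}
\item The balancing condition makes $\Phi$ invariant under translating $P(h)$. We may therefore normalize $0\in P(h)$, which gives coercivity, so a minimizer exists by compactness.
\item The function $h\mapsto\vol(P(h))$ is differentiable with $\partial\vol/\partial h_j=\vol_{n-1}(P(h)_{u_j})$. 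The facet may be empty or lower-dimensional, in which case this derivative is $0$.
\item Lagrange multipliers then give $f_j=\lambda\,\vol_{n-1}(P(h)_{u_j})$ for all $j$. Since every $f_j>0$, each $u_j$ is a genuine facet normal.
\item Rescaling $P(h)$ by $\lambda^{1/(n-1)}$ yields a polytope with $S(P,\cdot)=\mu$.
\end{itemize}
The only delicate point here is justifying the derivative formula, which is a standard computation.

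\emph{General case.} Partition $S^{n-1}$ into finitely many Borel pieces of small diameter and replace $\mu$ by point masses located at the centers of mass of the pieces. Adding a small correction keeps the approximation balanced and non-degenerate, giving discrete $\mu_k$ that converge weakly to $\mu$. Let $P_k$ be the polytopes from the discrete case, translated so that their centroids are at the origin.

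The main obstacle is a uniform bound on the diameters of the $P_k$. If $P_k$ contains a segment $[0,R\,w]$ with $w\in S^{n-1}$, then
\[
\vol(P_k)\;\gtrsim\; R\int_{S^{n-1}}\pair{u}{w}_+\,\mathrm d\mu_k(u)\cdot(\text{a suitable scale}).
\]
The left-hand integral is bounded below uniformly in $w$ and $k$, because $\mu$ is not concentrated on a great subsphere. On the other hand, $n\vol(P_k)=\int \hf_{P_k}\,\mathrm d\mu_k$, and the isoperimetric inequality bounds $\vol(P_k)$ in terms of $\mu_k(S^{n-1})$. Combining these bounds $R$ uniformly.

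With the diameters bounded, Blaschke's selection theorem gives a convergent subsequence $P_k\to K$. Weak continuity of surface area measures yields $S(K,\cdot)=\mu$. Finally, $K$ is full-dimensional, since otherwise $\mu$ would be concentrated on a great subsphere (or would be a pair of antipodal point masses, which is excluded).

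\emph{Uniqueness.} Suppose $S(K,\cdot)=S(L,\cdot)$. Then
\[
V(K,\dots,K,L)=\tfrac1n\int \hf_L\,\mathrm dS(K,\cdot)=\tfrac1n\int\hf_L\,\mathrm dS(L,\cdot)=\vol(L),
\]
and symmetrically $V(L,\dots,L,K)=\vol(K)$. Minkowski's first inequality, $V(K,\dots,K,L)^n\ge\vol(K)^{n-1}\vol(L)$, then gives $\vol(L)\ge\vol(K)$, and the symmetric argument gives the reverse. Hence equality holds in Minkowski's inequality, so $K$ and $L$ are homothetic. Equal volumes then force $K$ to be a translate of $L$.
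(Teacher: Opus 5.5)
The paper gives no proof of this statement and simply quotes \cite{Schneider2014}*{Thm.~8.2.2}. Your outline is the standard argument from that reference: Minkowski's variational argument for discrete measures, then approximation with a diameter bound from $n\vol(P_k)=\int \hf_{P_k}\,\mathrm d\mu_k$ and the isoperimetric inequality, then uniqueness via Minkowski's first inequality. It is correct in substance.

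Two cosmetic fixes are worth making. First, placing mass $\bigl|\int_{\omega_i}u\,\mathrm d\mu(u)\bigr|$ at the normalized barycenter of each piece $\omega_i$ makes the approximating measures exactly balanced, so no correction is needed. Second, the diameter estimate is cleanest written directly as $n\vol(P_k)=\int \hf_{P_k}\,\mathrm d\mu_k\ge R\int \pair{u}{w}_+\,\mathrm d\mu_k(u)$, using $0,Rw\in P_k$, with no extra scale factor.
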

	\begin{remark}
		If $\mu = \sum_i f_i \delta_{u_i}$ is a sum of Dirac measures $\delta_{u_i}$, $u_i \in S^{n-1}$, then the resulting body $K$ is indeed a polytope and the conditions reduce to
		\begin{align*}
			\sum_i f_i u_i = 0,
		\end{align*}
		and $\spann\{u_i\} = \RR^n$. In the planar case, it is trivial to retrieve $K$ from $\mu$: One simply takes the $\tfrac \pi 2$-rotation of the polygon with edge vectors $f_i u_i$. The above equation ensures that these edges can form a (convex) polygon.
	\end{remark}
	
	We are now ready to prove \autoref{prop:planar}.
	
	\begin{proof}[Proof of \autoref{prop:planar}]
		We will proceed in three steps. First, we will use \autoref{lemma:FacetImpliesJumps} and \autoref{lemma:JumpImpliesFacet} to find a one-to-one correspondence between edges of $K$ and edges of $M$. Then, we will find a similar correspondence for the strictly convex parts of $\bd K$ and $\bd M$, using \autoref{lemma:strict1} and \autoref{lemma:strict2}. In the last step, finally, we conclude that $K$ and $M$ must have equal surface area measures. The claim then follows from the uniqueness up to translations of convex bodies from their surface area measures (see \autoref{thm:Minkowski}).
		
		\medskip
		
		For the first step, let $u$ be the normal vector of an edge $K_u$ of $K$. Then, all but finitely many $x \in \aff(K_u)/\ZZ^2$ satisfy the conclusions \eqref{it:FacImpJumpNorm} and \eqref{it:FacImpJumpTrans} of \autoref{lemma:FacetImpliesJumps}. If we additionally choose $x \in K_u$, we have $0 \in K_u - x$ and thus $\tcount_{K_u}(x) > 0$. Hence, properties \eqref{it:jumpImpFacNorm} and \eqref{it:jumpImpFacTrans} in \autoref{lemma:JumpImpliesFacet} are satisfied for $\tcount_K$, $u$ and $x$, and therefore, by equality, also for $\tcount_M$, $u$ and $x$. It follows that $M$ has an edge $M_u$ with outer normal $u$. Exchanging the roles of $K$ and $M$ we see that $u$ is an outer normal of an edge of $K$, if and only if $u$ is an outer normal of an edge of $M$.
		
		Let $u$ be an outer normal of edges $E = K_u$ of $K$ and $E' = M_u$ of $M$. By \autoref{lemma:FacetImpliesJumps}, the preimages of all but finitely many points $x\in\pi(E)$ satisfy properties \eqref{it:jumpImpFacNorm} and \eqref{it:jumpImpFacTrans} in \autoref{lemma:JumpImpliesFacet} for $K$. Conversely, by \autoref{lemma:JumpImpliesFacet}, any point $x\in \torus{2}$ with a preimage that satisfies \eqref{it:jumpImpFacNorm} and \eqref{it:jumpImpFacTrans} for $M$ is a point in $\pi(E')$. Since \eqref{it:jumpImpFacNorm} and \eqref{it:jumpImpFacTrans} depend solely on the translation count $\tcount_K=\tcount_M$ we obtain $\pi(E) = \pi(E')$, that is, $\pi(K_u) = \pi(M_u)$. Here, we used that $\pi(K_u)$ and $\pi(M_u)$ are line segments in $\torus{2}$ and therefore already determined by all but finitely many points.
		
		Next, we want to show that $K_u$ and $M_u$ have the same length. To this end, we distinguish between irrational and rational directions. If there is no rational vector in $\spann\{u\}$, that is, $u$ is not the multiple of a rational vector, then $\pi|_{K_u}$ is injective: Indeed, suppose that there exist distinct $x,x'\in K_u$ with $\pi(x)=\pi(x')$. Then, $x-x' \in u^\perp$ is a non-zero integer vector, a contradiction. Thus $\pi(K_u)$ is a curve on the torus $\torus{2}$ whose length is precisely the length of $K_u$. Since $\pi(K_u)=\pi(M_u)$, $K_u$ and $M_u$ have the same length in this case.
		
		Next, we assume that $\spann\{u\}$ contains a rational vector. Then, $\Lambda=\ZZ^2\cap u^\perp$ is a 1-dimensional lattice. Let $x_0\in K_u$ be fixed. Then, for all but finitely many $y\in u^\perp/\Lambda$ we have by \autoref{lemma:FacetImpliesJumps}
		\[
		\tcount^\Lambda_{K_u-x_0}(y)=\tcount_{K_u}(x_0 +y) = \Delta_v\tcount_K(x_0+y),
		\]
		where $v$ is any vector with $\langle u,v\rangle>0$.
		Since $\pi(K_u)=\pi(M_u)$ there exists a vector $z\in\ZZ^2$ such that $x_0+z\in M_u$ and again by \autoref{lemma:FacetImpliesJumps} we have
		\[\begin{split}
			\tcount_{M_u-x_0-z}^\Lambda(y)&=\tcount_{M_u}(x_0 +z+y)\\
			& = \Delta_v\tcount_M(x_0+z+y)
			= \tcount_{K_u-x_0}^\Lambda(y)
			.
		\end{split}\]
		where the last equation follows from $\tcount_K=\tcount_M$ and the fact that $\tcount_K$ is $\ZZ^2$-periodic. Equation~\eqref{eq:volIsIntegral} then implies that the lengths of $K_u$ and $M_u$ are equal.
		
		\medskip
		
		For the second step, we turn to the strictly convex parts of the boundary. In view of \autoref{lem:tcConstIffBd} we have $\pi(\bd(K)) = \pi(\bd(M)) =: \bdproj$. Let $R\subseteq \bd K$ be a regular and strictly convex curve. By \autoref{lemma:strict1}, all but finitely many points in $\pi(R)$ have a unique preimage in $\bd K$ under $\pi$.  Therefore, there exist finitely many regular strictly convex curves $\rho_1,\dots,\rho_m$ such that $\pi(R) = \rho_1\cup\cdots\cup\rho_m$, the union is disjoint and each $\rho_i$ can be represented as $\rho_i = \bdproj\cap U_i$ for certain open sets $U_i\subseteq\torus{2}$.
		
		The regular boundary of $K$ can be decomposed into at most countably many disjoint regular curves $(R_i)_{i\in I}$. By the previous argument, we can now refine this decomposition to an at most countable decomposition $(\gamma_j)_{j\in J}$, missing at most countably many points, such that each $\gamma_j$ projects injectively to some $\rho_j = \bdproj \cap U_j$, $U_j \subseteq \torus{2}$ open. We conclude that $\bdproj$ decomposes (up to countably many points) into a family of regular strictly convex curves in $\torus{2}$ and a countable family of straight lines (corresponding to edges of $K$, and thus also of $M$).
		
		Fix one of the regular strictly convex curves $\rho_j \subset \bdproj$. By \autoref{lemma:strict2}, there exists a partition $\rho_j = \rho_j^1 \cup \dots \cup \rho_j^{m_j}$ of $\rho_j$ into connected regular strictly convex curves with $\pi_{z_j^k}^{-1}(\rho_j^k) \subseteq \bd K$ for certain $z_{j}^k\in\ZZ^2$, $1 \leq k \leq m_j$. Applying \autoref{lemma:strict2} to each $\rho_j^k$ for the body $M$, we may assume (possibly after renaming) that $\pi_{w_j^k}^{-1}(\rho_j^k) \subseteq \bd M$ as well for certain $w_{j}^k\in\ZZ^2$, $1 \leq k \leq m_j$. Consequently, each $\rho_j^k$ corresponds to unique parts of the boundary of $K$ and of the boundary of $M$. Those boundary parts are translates of each other by $z_j^k - w_j^k \in \ZZ^2$.
		
		In total, we have obtained partitions $(S_a)_{a \in A}$ and $(S_a')_{a \in A}$ of $\bd K$ and $\bd M$, respectively, where $S_{a}' = S_a + z_a$ for some $z_a \in \RR^2$.		
		Hence, for any set $\omega\subseteq S_1$ we have, writing $\tau(K,\omega)$ for the set of regular boundary points of $K$ with outer unit normal in $\omega$:
		\[\begin{split}
			\surf(K,\omega) = \haus^1(\tau(K,\omega))& = \sum_{a\in A}\haus^1(\tau(K,\omega)\cap S_a)\\
			& = \sum_{a\in A}\haus^1(\tau(M,\omega)\cap (S_a+z_a)) = \surf(M,\omega).
		\end{split}\]
		Therefore, it follows from the uniqueness part in \autoref{thm:Minkowski} that $M = K+v$ for some vector $v\in\RR^2$. It remains to show that $v$ is an integer vector. For this, we can proceed as in \cite{HigashitaniEtAl}: Since the statement of the proposition trivially holds in dimension 1 (see \autoref{rem:injInR1}), we can use \autoref{lemma:proj} to obtain \[
		\Proj_{e_i}K +\Proj_{e_i}v =  \Proj_{e_i} M = \Proj_{e_i} K + z_i\] for $1\leq i\leq 2$ and $z_i\in\ZZ$. This implies $v\in\ZZ^2$ and the proof is complete. 
	\end{proof}
	
	\subsection{The induction step}
	The following lemma will enable the induction step.
	\begin{lemma}
		\label{lemma:linear}
		Let $f\colon S^{n-1}\to\RR$, where $n\geq 3$, be a continuous function and suppose that $f|_{z^\perp}$ is linear for every $z \in (\ZZ^n)^\star \setminus \{0\}$, that is, for every $z$ there exists a $y_z \in (\RR^n)^\star$ such that $f(u) = \pair{y}{u}$, $u\in z^\perp$. Then $f$ is the restriction of a linear function.
	\end{lemma}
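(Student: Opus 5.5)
Throughout, I read the statement as saying: $f\colon S^{n-1}\to\RR$ is continuous, and for each nonzero $z\in(\ZZ^n)^\star$ its restriction to the great subsphere $S^{n-1}\cap z^\perp$ agrees with a linear functional $u\mapsto\pair{y_z}{u}$. The goal is a single $y$ with $f(u)=\pair{y}{u}$ on all of $S^{n-1}$. I extend $f$ positively $1$-homogeneously to $F\colon\RR^n\to\RR$, $F(x)=|x|\,f(x/|x|)$ with $F(0)=0$. The hypothesis then says $F$ is linear on every rational hyperplane $H_z=z^\perp$. It suffices to show that $F$ is additive, i.e.\ $F(a+b)=F(a)+F(b)$ for all $a,b\in\RR^n$. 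Homogeneity under $\lambda\geq 0$ holds by construction. Additivity gives $F(-a)=-F(a)$, so $F$ is linear.

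\textbf{Rational vectors.} For $a,b\in\QQ^n$, the span of $a,b$ has dimension at most $2\leq n-1$, because $n\geq 3$. Hence it lies in some rational hyperplane $H_z$: choose $z\in(\ZZ^n)^\star\setminus\{0\}$ in the annihilator of $a$ and $b$, which is a rational subspace of dimension $\geq n-2\geq 1$. Since $F|_{H_z}$ is linear, $F(a+b)=F(a)+F(b)$. The same argument gives $F(-a)=-F(a)$. So $F|_{\QQ^n}$ is $\QQ$-linear and there is $y\in(\RR^n)^\star$ with $F(q)=\pair{y}{q}$ for all $q\in\QQ^n$; take $y_i=F(e_i)$.

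\textbf{Density.} $F$ is continuous on $\RR^n\setminus\{0\}$ as a product of continuous functions, and continuous at $0$ because $f$ is bounded on the compact sphere. Rational points are dense, so $F(x)=\pair{y}{x}$ for all $x$, and restricting to $S^{n-1}$ gives the claim. Continuity is used only in this final step, and the assumption $n\geq 3$ only in the rational step.

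The step needing the most care is the rational one, specifically checking that any two rational vectors lie in a common rational hyperplane. This fails for $n=2$, which is why $n\geq3$ is assumed. Note also that linearity on a hyperplane through the origin is consistent with homogeneous extension, since restricted to $H_z$ the extension is exactly $\pair{y_z}{\cdot}$.
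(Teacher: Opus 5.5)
Your proof is correct, but it takes a different route from the paper's.

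The paper argues in two steps. First, rational hyperplanes $z^\perp$ are dense in the Grassmannian of all hyperplanes, so continuity of $f$ upgrades the hypothesis to linearity of $f$ on \emph{every} hyperplane. Second, it cites a known linear-algebra lemma: for $n\geq 3$, a function that is linear on all hyperplanes is linear.

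You reverse the order. You run the linear-algebra argument directly on rational vectors: any two of them lie in a common \emph{rational} hyperplane once $n\geq 3$. This gives $\QQ$-linearity of the homogeneous extension $F$ on $\QQ^n$. You then use continuity only in its simplest form, namely density of $\QQ^n$ in $\RR^n$.

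Your version is fully self-contained. It avoids the external citation and also the limiting step hidden in the paper's ``hence''. That step needs a compactness argument: the functionals $y_z$ restricted to $z^\perp$ have norm at most $\max|f|$, so a subsequence converges as the rational hyperplanes approach a given one. The paper's version is shorter on the page because it delegates both steps to the density of rational hyperplanes and to the literature.

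One small wording point. You announce that it suffices to prove additivity of $F$ on all of $\RR^n$. What you actually prove is $F(q)=\pair{y}{q}$ for $q\in\QQ^n$, followed by extension by continuity. That is perfectly valid, and additivity everywhere follows from it, but it is not quite the reduction you stated.
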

	
	\begin{proof}
		First note that the collection of hyperplanes $z^\perp$ with $z \in (\ZZ^n)^\star\setminus\{0\}$ forms a dense subset of the grassmanian of all hyperplanes in $\RR^n$. Hence, we may conclude by the continuity of $f$ that $f$ is linear when restricted to \emph{any} hyperplane. A short linear algebra argument then shows the claim (see, e.g., \cite{Brauner2024b}*{Lem.~3.4}).
	\end{proof}

	\begin{proof}[Proof of \autoref{mthm:inj}]
		The statement is clearly true for $n=1$ (see also \autoref{prop:tcInR1}), and by \autoref{prop:planar} it is true for $n=2$, so let $n\geq 3$. By \autoref{lemma:proj} we have that $\tcount_{\Proj_z K}^{\Lambda_z} = \tcount_{\Proj_z M}^{\Lambda_z}$ holds for all $z\in\ZZ^n\setminus\{0\}$. By induction, we obtain that $\Proj_z K = \Proj_z M + v_z$ for a certain $v_z\in \Lambda_z$. Since $\hf_K$ and $\hf_{\Proj_z K}$ agree on $z^\perp$, we have that $(\hf_K -\hf_M)(u) = \langle u,v_z\rangle$ holds for all $u\in z^\perp$. By \autoref{lemma:linear} we obtain that $\hf_K-\hf_M$ is a linear function, say $(\hf_K - \hf_M)(u) = \pair{u}{v}$ with $v \in \RR^n$. This implies that $K = M+v$ and we have $\Pr_z v = v_z$ for all $z\in\ZZ^n\setminus\{0\}$. If one considers, for instance, the standard basis vectors $e_1,\dots,e_n\in\ZZ^n$ one finds that $v\in\ZZ^n$ as desired. 
	\end{proof}
	
	\section{Kubert--Lang operators}\label{sec:kubert_lang}	
	
	Recall that the Kubert--Lang operator $\kubert_m \colon \functions(\torus{n}) \to \functions(\torus{n})$ is defined for every $m \in \ZZ_{>0}$ by
	\begin{align*}
		\kubert_m f(v) = \tfrac{1}{m^n} \sum_{u \colon mu=v} f(u), \quad f \in \functions(\torus{n}),
	\end{align*}
	or, more explicitly, by
	\begin{align*}
		\kubert_m f(v) = \tfrac 1 {m^n} \sum_{z\in\{0,\dots,m-1\}^n} f\left(\tfrac 1m (v+z)\right).
	\end{align*}
	We further define $\kubert_{m,i} f = m^{n-i} \kubert_{m} f$, $i \in \{0, \dots, n\}$.
	
	In this section, we will study the operators $\kubert_{m}$ and $\kubert_{m,i}$ and, subsequently, prove \autoref{mthm:eihyp} and \autoref{mthm:descrTdm1}.
	\subsection{Auxiliary results}
	First, let us consider some auxiliary results. Recall that $\pi: \RR^n \to \torus{n}$ denotes the canonical projection.
	
	\begin{lemma}
		\label{lemma:riemann}
		Let $f\in\functions(\torus{n})$ be Riemann integrable and let $v\in \torus{n}$. Then,
		\begin{align}\label{eq:limKubertInt}
		\lim_{m\to\infty} \kubert_{m} f(v) = \int_{\torus{n}} f(u)\,\mathrm d u.
		\end{align}
		In particular,
		\begin{enumerate}
			\item $\kubert_m f = f$ holds for all $m \in \ZZ_{>0}$ if and only if $f \equiv c$, $c \in \RR$.
			\item\label{it:lemRiemannInt0} Let $i < n$. Then, $\kubert_{m,i} f = f$ for all $m \in \ZZ_{>0}$ implies
			\begin{align*}
				\int_{\torus{n}} f(x)\,\mathrm d x = 0.
			\end{align*}
		\end{enumerate}
	\end{lemma}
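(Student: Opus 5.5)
The main limit \eqref{eq:limKubertInt} should follow from recognizing $\kubert_m f(v)$ as a Riemann sum. I fix a representative $v\in[0,1)^n$ and view $f$ as a $\ZZ^n$-periodic Riemann integrable function on $\RR^n$. Then
\[
\kubert_m f(v) = \frac{1}{m^n}\sum_{z\in\{0,\dots,m-1\}^n} f\left(\tfrac 1m (v+z)\right).
\]
The cubes $\tfrac1m(z+[0,1]^n)$, $z\in\{0,\dots,m-1\}^n$, partition $[0,1]^n$ into pieces of volume $m^{-n}$ and mesh $\sqrt n/m\to 0$. The sample point $\tfrac1m(v+z)$ lies in the cube indexed by $z$, since $v\in[0,1)^n$. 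So $\kubert_m f(v)$ is a Riemann sum of $f$ over $[0,1]^n$ for a tagged partition whose mesh tends to zero. By Riemann integrability these sums converge to $\int_{[0,1]^n} f = \int_{\torus{n}} f$. The one point to check is that the limit is independent of the tags, which is exactly the definition of the Riemann integral, using upper and lower Darboux sums on the uniform grid. I expect no real obstacle here.

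For (1), suppose $\kubert_m f = f$ for all $m$. Then $f(v)=\kubert_m f(v)$ for every $m$, and letting $m\to\infty$ gives $f(v)=\int f$ for every $v$, so $f$ is constant. Conversely, if $f\equiv c$, then $\kubert_m f = m^{-n}\cdot m^n c = c$, because the sum has exactly $m^n$ terms.

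For (2), I use that $\kubert_{m,i} f = f$ means $\kubert_m f = m^{i-n} f$. Fix any $v$. The left-hand side converges to $\int f$ as $m\to\infty$. On the right-hand side, $f(v)$ is a fixed finite real number and $m^{i-n}\to 0$ since $i<n$, so the right-hand side tends to $0$. Hence $\int_{\torus{n}} f = 0$.
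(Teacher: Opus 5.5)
Your proposal is correct and follows the paper's proof: you read $\kubert_m f(v)$ as a Riemann sum on the uniform grid of mesh $1/m$ with tags $\tfrac1m(v+z)$, and you obtain (1) and (2) by letting $m\to\infty$ in the respective eigen-equations. Your indexing of the cells as $\tfrac1m(z+[0,1]^n)$ is in fact the correct form of the partition the paper intends; its written cells $\pi(z+[0,\tfrac1m)^n)$ all coincide, since $z\in\ZZ^n$.
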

	
	\begin{proof}
		Let $v\in\torus{n}$. Then the sets $\pi(z + [0,\frac{1}{m})^n)$, $z \in\{0,...,m-1\}^n$, form a partition of the torus $\torus{n}$, each cell containing the corresponding vector $\tfrac 1m (v+z)$. Consequently, $\kubert_{m} f(v)$ is a Riemann sum of $f$ with cell width $\frac{1}{m}$, and \eqref{eq:limKubertInt} follows from Riemann integrability of $f$. The two remaining claims are direct consequences.
	\end{proof}
	
	Let us fix now a toric arrangement $\hyp$ and let $\hypcells$ denote the set of relatively open cells $C\subset\torus{n}$ defined by $\hyp$. For a set $X\subseteq\torus{n}$, let $\eins_X\in\functions(\torus{n})$ be its indicator function. By definition, the functions $\eins_C$, $C\in\hypcells$, are a basis of $\pc(\hyp)$.
	
	\begin{lemma}
		\label{lemma:cells}
		Let $C\in\hypcells$ and $m\in\ZZ_{>0}$. Then, $\kubert_{m,0}\eins_C = \tcount_{m\widetilde C}$, where $\widetilde C$ denotes a relatively open cell in the periodic affine arrangement associated to $\hyp$ that projects injectively to $C$.
	\end{lemma}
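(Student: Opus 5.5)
We unwind the definition of $\kubert_{m,0}$ and reduce everything to counting lattice points in a dilated cell. By definition, $\kubert_{m,0} = m^n\kubert_m$, so for $v\in\RR^n$
\[
\kubert_{m,0}\eins_C(\pi(v)) = \sum_{z\in\{0,\dots,m-1\}^n} \eins_C\bigl(\pi(\tfrac 1m (v+z))\bigr) = \bigl|\{u\in\torus{n} : mu=\pi(v),~u\in C\}\bigr|.
\]
The right-hand side should be compared with $\tcount_{m\widetilde C}(v) = |(m\widetilde C - v)\cap\ZZ^n|$. Here $\tcount$ is applied to the (not necessarily closed) set $m\widetilde C$ using the same formula~\eqref{eq:defTranslCount}. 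Note that $\widetilde C$ is a bounded relatively open polytope, since a cell of a central toric arrangement lifts to a bounded cell of the periodic affine arrangement.

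The key step is to build a bijection between the two sets being counted. Consider the map
\[
(m\widetilde C - v)\cap \ZZ^n \to \{u\in C : mu = \pi(v)\},\qquad w\mapsto \pi\bigl(\tfrac 1m (v+w)\bigr).
\]
This map is well defined: $w\in m\widetilde C-v$ gives $\tfrac1m(v+w)\in\widetilde C$, so its image lies in $\pi(\widetilde C)=C$. Moreover, $m\cdot\tfrac1m(v+w) = v+w \equiv v$ modulo $\ZZ^n$.

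For surjectivity, take $u\in C$ with $mu=\pi(v)$. Since $\pi|_{\widetilde C}$ is a bijection onto $C$, we can choose its lift $\tilde u\in\widetilde C$. Then $m\tilde u - v =: w\in\ZZ^n$, and $w\in m\widetilde C - v$.

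For injectivity, suppose two lattice points $w,w'$ have the same image. Then $\tfrac1m(v+w)$ and $\tfrac1m(v+w')$ both lie in $\widetilde C$ and differ by an element of $\ZZ^n$. Injectivity of $\pi$ on $\widetilde C$ then forces them to be equal, hence $w=w'$.

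Taking cardinalities gives $\kubert_{m,0}\eins_C(\pi(v)) = \tcount_{m\widetilde C}(v)$, which is the claim. Both sides are $\ZZ^n$-periodic in $v$, so the identity holds on the torus.

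The only genuinely delicate point is the existence of a lift $\widetilde C$ on which $\pi$ is injective with $\pi(\widetilde C)=C$. I would justify this by noting that the periodic arrangement is invariant under $\ZZ^n$-translations, so its cells are permuted by $\ZZ^n$. The preimage $\pi^{-1}(C)$ is therefore a disjoint union of translates of one cell $\widetilde C$.

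It then remains to check that the stabilizer of $\widetilde C$ in $\ZZ^n$ is trivial. A nonzero $z$ with $\widetilde C+z=\widetilde C$ is impossible because $\widetilde C$ is bounded. Boundedness in turn holds because the hyperplane family is central and contains a spanning set of normals. If the normals did not span, cells would be unbounded in a lattice direction and the lemma should be read with $\widetilde C$ a fundamental piece. I would take the statement's phrasing ("projects injectively to $C$") as the hypothesis supplying exactly this, so the bijection argument above suffices.
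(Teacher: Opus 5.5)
Your proof is correct and follows the paper's argument: the paper likewise writes $\kubert_{m,0}\eins_C(v)=|\{u\in C: mu=v\}|$, lifts this to $|\widetilde C\cap(\tfrac1m v+\tfrac1m\ZZ^n)|$ and rescales to $|(m\widetilde C-v)\cap\ZZ^n|$. Your explicit bijection $w\mapsto\pi(\tfrac1m(v+w))$ simply spells out these equalities, and your remarks on the existence and boundedness of an injective lift $\widetilde C$ (which requires the normals to span $\RR^n$) address a point the paper assumes in the statement.
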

	
	\begin{proof}
		By definition, we have $\kubert_{m,0}\eins_C(v) = |\{u \in C : mu=v\}|.$ Lifting to $\RR^n$, we obtain
		\[
		\kubert_{m,0}\eins_C(v) = | \widetilde C\cap (\tfrac 1m v + \tfrac 1m \ZZ^n) | = |(m\widetilde C - v) \cap \ZZ^n | = \tcount_{m\widetilde C}(v).\qedhere
		\]
	\end{proof}
	
	Recall that we write $T_a = \pi(a^\perp)$ for the subtorus of $\torus{n}$ defined by a primitive vector $a \in (\ZZ^n)^\star$, and $\Lambda_a^\star = \ZZ^n \cap a^\perp$ for the corresponding sub-lattice. As $a$ is primitive, the hyperplanes $\{x \colon \pair{a}{x} = z\}$, $z \in \ZZ$, are exactly the lattice planes parallel to $a^\perp$. Indeed, by Bezout's identity, we can find a lattice point $p \in \ZZ^n$ with $\pair{a}{p} = 1$, and, hence, every such hyperplane contains lattice points. Consequently, the set of hyperplanes coincides with $\pi^{-1}(T_a)$.
	
	We need a technical lemma to compute $\kubert_m$ on certain indicators of subtori.	
	\begin{lemma}
		\label{lemma:subgroups}
		Let $a\in(\ZZ^n)^\star$ be primitive, $m\in\NN$, and let $v\in\torus{n}$. Then there exists a solution to
		\[
		mu=v,\quad\text{such that}\quad u\in T_a, 
		\]
		if and only if $v\in T_a$.
		If either is the case, then the set of solutions has the form $u_0 + \pi\left(\tfrac 1m \Lambda_a^\star\right)$ for some solution $u_0\in T_a$. 
	\end{lemma}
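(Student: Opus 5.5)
We will work in $\RR^n$ and use that $\pi^{-1}(T_a)=\{x : \pair{a}{x}\in\ZZ\}$, as noted just before the lemma. Fix a lift $\tilde v\in\RR^n$ of $v$. The solutions of $mu=v$ in $\torus{n}$ are exactly the points $u=\pi\bigl(\tfrac1m(\tilde v+z)\bigr)$, $z\in\ZZ^n$. Hence there is a solution in $T_a$ if and only if some $z\in\ZZ^n$ satisfies $\pair{a}{\tilde v}+\pair{a}{z}\in m\ZZ$.

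\emph{Existence.} If such a $z$ exists, then $\pair{a}{\tilde v}\in\ZZ$ because $\pair{a}{z}\in\ZZ$, so $v\in T_a$. This also follows directly from $v=mu$ and the fact that $T_a$ is a subgroup. Conversely, assume $\pair{a}{\tilde v}=k\in\ZZ$. Since $a$ is primitive, Bezout gives $p\in\ZZ^n$ with $\pair{a}{p}=1$. Put $z=-kp$. Then $\pair{a}{\tilde v+z}=0\in m\ZZ$, so $u_0=\pi\bigl(\tfrac1m(\tilde v-kp)\bigr)$ lies in $T_a$ and satisfies $mu_0=v$.

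\emph{Structure of the solution set.} Let $u_0$ be one solution in $T_a$. Any other solution in the torus has the form $u=u_0+\pi(\tfrac1m w)$ with $w\in\ZZ^n$, since the solutions of $mu=v$ form a coset of the $m$-torsion subgroup $\pi(\tfrac1m\ZZ^n)$. Because $T_a$ is a subgroup, $u\in T_a$ if and only if $\pi(\tfrac1m w)\in T_a$, that is, $\pair{a}{w}\in m\ZZ$.

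It remains to show
\[
\pi\bigl(\tfrac1m\{w\in\ZZ^n : \pair{a}{w}\in m\ZZ\}\bigr)=\pi\bigl(\tfrac1m\Lambda_a^\star\bigr).
\]
The inclusion $\supseteq$ is clear. For $\subseteq$, write $\pair{a}{w}=mj$ and replace $w$ by $w-mjp$. This changes $\tfrac1m w$ by the integer vector $jp$, so the image under $\pi$ is unchanged, and now $\pair{a}{w-mjp}=0$, i.e.\ $w-mjp\in\Lambda_a^\star$. This gives the claimed form $u_0+\pi(\tfrac1m\Lambda_a^\star)$.

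There is no real obstacle. The only point requiring care is the last step: the reduction modulo $\ZZ^n$ via the Bezout vector $p$, which shows the relevant group is exactly $\pi(\tfrac1m\Lambda_a^\star)$ and not something larger.
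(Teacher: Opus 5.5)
Your proposal is correct and follows essentially the same route as the paper: you lift $v$, parametrize the solutions as $\pi\bigl(\tfrac1m(\tilde v+z)\bigr)$, use primitivity of $a$ (Bezout) for existence, and compare two solutions through the $m$-torsion of $T_a$. Your Bezout reduction $w\mapsto w-mjp$ spells out a step the paper only asserts, namely that the $m$-torsion of $T_a$ is exactly $\pi(\tfrac1m\Lambda_a^\star)$. You also check the reverse inclusion (that every point of $u_0+\pi(\tfrac1m\Lambda_a^\star)$ is indeed a solution in $T_a$), which the paper leaves implicit.
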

	
	\begin{proof}
		Let $v\in\torus{n}$ and consider a preimage $\widetilde v\in\RR^n$ under $\pi$. A solution of $mu=v$ has the form $u=\pi \left( \tfrac 1m (\widetilde v + z)\right)$ for some $z\in\ZZ^n$. We have $u\in T_a$ if and only if
		\[
		\langle a, \tfrac 1m (\widetilde v + z )\rangle \in\ZZ,
		\]
		for some $z\in\ZZ^n$. Equivalently,
		\[
		\langle a, \widetilde v \rangle \in -\langle a,z\rangle + m\ZZ,
		\]
		and, hence, $u \in T_a$ implies $\pair{a}{\widetilde{v}} \in \ZZ$, that is $v \in T_a$. On the other hand, if $v \in T_a$, then, as $a$ is primitive, there exists $z \in \ZZ^n$ such that $\pair{a}{\widetilde{v}} + \pair{a}{z} \in m\ZZ$, and thus $u \in T_a$.
		
		For the second part, consider two solutions $u_0,u\in T_a$ of $mu=v$. As we then have $u - u_0\in T_a$ and $m(u-u_0) = 0$, we see that $u - u_0 \in \pi(\tfrac{1}{m}\Lambda_a^\star)$. This proves the claim.
	\end{proof}
	
	\begin{lemma}
		\label{lemma:more_subgroups}
		Suppose that $a_1, \dots, a_k \in (\ZZ^n)^\star$ are primitive, write $X=\bigcap_{i=1}^k T_{a_i}$, and let $C \subseteq X$ be a connected component of $X$ and let $\ell = \dim\spann\{a_1,\dots,a_k\}$.
		
		Then $C$ is an affine $(n-\ell)$-torus and for any  $m \in \ZZ_{>0}$ and $v \in \torus{n}$ we have
		\begin{align*}
			|\{u\in C : mu=v\}| \leq m^{n-\ell}.
		\end{align*}
	\end{lemma}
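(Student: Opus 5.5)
We lift to $\RR^n$ and argue with lattices. Let $\widetilde X = \pi^{-1}(X) = \{x\in\RR^n : \pair{a_i}{x}\in\ZZ,\ 1\le i\le k\}$ and $W = \bigcap_{i} a_i^\perp$, a rational linear subspace of dimension $n-\ell$. The set $\widetilde X$ is a union of translates of $W$: it is invariant under $W$, and modulo $W$ it is the discrete set $\{y\in \RR^n/W : \pair{a_i}{y}\in\ZZ\}$, which is a lattice in the $\ell$-dimensional quotient because $\spann\{a_1,\dots,a_k\}$ has dimension $\ell$ and the $a_i$ are integral. Hence the connected components of $\widetilde X$ are the affine subspaces $x_0+W$, and the connected components of $X$ are their images $\pi(x_0+W)$. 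Since $W$ is rational, $W\cap\ZZ^n$ is a lattice of rank $n-\ell$, so $\pi(x_0+W)\cong W/(W\cap\ZZ^n)$ is a compact affine $(n-\ell)$-torus. This proves the first claim.

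For the counting claim, fix $v$ and suppose $u_0\in C$ satisfies $mu_0=v$; if no such $u_0$ exists there is nothing to prove. Any other solution $u\in C$ satisfies $m(u-u_0)=0$ in $\torus{n}$. Moreover $u-u_0$ lies in the subtorus $C-u_0=\pi(W)$, since the component $C$ is a translate of $\pi(W)$. Therefore the solutions inject, via $u\mapsto u-u_0$, into the $m$-torsion subgroup of $\pi(W)\cong W/(W\cap\ZZ^n)\cong \RR^{n-\ell}/\ZZ^{n-\ell}$. This subgroup is $\tfrac1m (W\cap\ZZ^n)/(W\cap\ZZ^n)$ and has exactly $m^{n-\ell}$ elements. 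This gives the bound, mirroring the argument of \autoref{lemma:subgroups}.

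The only point needing care is the identification $C-u_0=\pi(W)$. Concretely, two points of one component differ by an element of $\pi(W)$ because they lift into a common $x_0+W$. The torsion count then uses that $\pi|_W$ has kernel exactly $W\cap\ZZ^n$, a full-rank lattice in $W$; this holds since $W$ is cut out by integer equations.
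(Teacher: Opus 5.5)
Your proof is correct and takes essentially the same route as the paper. You lift $X$ to a union of parallel translates of $W=\ker A^T$ (with $A$ the matrix whose columns are the $a_i$), read off the components and their dimension $n-\ell$, and bound the number of solutions by the $m$-torsion of $\pi(W)\cong W/(W\cap\ZZ^n)$, which is exactly the paper's count $[\tfrac1m\ZZ^n\cap\ker A^T:\ZZ^n\cap\ker A^T]=m^{n-\ell}$. You leave implicit that two translates have either equal or disjoint images in $\torus{n}$, which is what makes these images the components of $X$; the paper handles this by indexing the components with the finite group $(A^T\RR^n\cap\ZZ^k)/A^T\ZZ^n$, and it follows just as well from your discreteness observation.
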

	
	\begin{proof}
		The preimage of $X$ under the canonical projection is
		\[
		\pi^{-1}(X) = \{x\in\RR^n : A^Tx\in\ZZ^k\} = \bigcup_{\gamma\in\ZZ^k} \mathcal A_\gamma,
		\]
		where $A$ is the matrix whose columns are the vectors $a_i$ and $\mathcal A_\gamma = \{ x\colon A^Tx = \gamma\}$. Note that two of the affine spaces $\mathcal A_\gamma$ and $\mathcal A_{\gamma'}$ project to the same torus, if and only if $\gamma - \gamma' \in A^T\ZZ^n$, so the connected components of $X$ are indexed by the cosets $(A^T\RR^n \cap\ZZ^k)/ A^T\ZZ^n$, of which there are finitely many. The dimension of each component is the dimension of some $\mathcal A_\gamma$ and, thus, equal to $\dim\ker A^T = n-\ell$.
		
		Finally, we fix a component $C=\pi(\mathcal A_\gamma)$ and we consider two solutions $u,u_0\in C$ to $mu=v$. Let $\widetilde u,\widetilde u_0\in \mathcal A_\gamma$ be preimages of $u$, resp., $u_0$. Then, since $m(u-u_0)=0$, we have $\widetilde u - \widetilde u_0 \in \tfrac 1m \ZZ^n$. At the same time, we have $\widetilde u- \widetilde u_0 \in \ker A^T$. Hence, $u-u_0\in \pi\left( \tfrac 1m \ZZ^n \cap \ker A^T\right)$. The claim follows, since the set on the right hand side has cardinality $[ \tfrac 1m \ZZ^n \cap \ker A^T :   \ZZ^n \cap \ker A^T] = m^{n-\ell}$.
	\end{proof}
	\subsection{Characterization of simultaneous eigenspaces}
	In this section, we study the simultaneous eigenspaces of the operators $\kubert_{m}$ on the space $\pc(\hyp)$ of piecewise constant functions on the cells of a central toric arrangement $\hyp$, that is, the sets
	\begin{align*}
		E_i(\hyp) = \bigcap_{m \in \ZZ_{>0}}\left\{f\in\pc(\hyp) : \kubert_{m}f = m^{i-n}f\right\}, \quad i \in \{0, \dots, n\}.
	\end{align*}
	For $i=n$, a complete characterization can be given easily using \autoref{lemma:riemann}:
	\begin{align}\label{eq:charEn}
		 \quad E_n(\hyp) = \{f \in \pc(\hyp)\colon f \equiv c,~ c \in \RR\}.
	\end{align}
	
	To prove \autoref{mthm:eihyp}, we first need to study some properties of the translation count. Let $A$ be an Abelian semigroup. Here, we say that a map $\varphi\colon\polys(\ZZ^n)\to A$ is invariant under lattice translations if $\varphi(P + z) = \varphi(P)$ for every $P \in \polys(\ZZ^n)$ and $z \in \ZZ^n$. We say that $\varphi$ is a vauation, if $\varphi(P\cup Q) + \varphi( P\cap Q) = \varphi(P)+\varphi(Q)$ holds, whenever $P\cup Q,P\cap Q\in\polys(\ZZ^n)$.
The following lemma is immediate.
	
	\begin{lemma}\label{lem:tCountVal}
		The map $\K(\RR^n) \to \functions(\torus{n}),~K \mapsto \tcount_K$  is a valuation that is invariant under translations from $\ZZ^n$.
	\end{lemma}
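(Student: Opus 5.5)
The plan is to check the two properties directly from the definition $\tcount_K(v) = |(K-v)\cap\ZZ^n|$, working pointwise in $v$. Since all operations on $\functions(\torus{n})$ (sums, equality) are pointwise, it suffices to fix $v\in\RR^n$ and verify each identity for the resulting integer-valued functions of $K$.

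\emph{Invariance.} For $z\in\ZZ^n$ we have $(K+z-v)\cap\ZZ^n = \bigl((K-v)\cap\ZZ^n\bigr)+z$, because $\ZZ^n+z=\ZZ^n$. Translation by $z$ is a bijection, so $\tcount_{K+z}(v)=\tcount_K(v)$ for every $v$.

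\emph{Valuation property.} Let $K,L\in\K(\RR^n)$ with $K\cup L\in\K(\RR^n)$; the intersection $K\cap L$ is then automatically convex and compact. The observation is that for fixed $v$ the map $S\mapsto |(S-v)\cap\ZZ^n|$ is the counting measure of the finite set $\ZZ^n+v$, evaluated on bounded sets. Translation commutes with unions and intersections: $(K\cup L)-v=(K-v)\cup(L-v)$ and $(K\cap L)-v=(K-v)\cap(L-v)$. The same holds after intersecting with $\ZZ^n$. Applying inclusion–exclusion to the two finite sets $A=(K-v)\cap\ZZ^n$ and $B=(L-v)\cap\ZZ^n$ gives $|A\cup B|+|A\cap B|=|A|+|B|$, which is exactly
\[
\tcount_{K\cup L}(v)+\tcount_{K\cap L}(v)=\tcount_K(v)+\tcount_L(v).
\]
If one wants to allow empty intersections, one sets $\tcount_\emptyset=0$, which is consistent with the definition.

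There is no real obstacle here. The only point needing care is the bookkeeping of which class the union and intersection are required to lie in; the counting identity itself holds for arbitrary bounded sets. Since the statement is for $\K(\RR^n)$, the same argument restricted to $\polys(\ZZ^n)$ also gives the version matching the paper's definition of valuation, where $P\cup Q, P\cap Q\in\polys(\ZZ^n)$.
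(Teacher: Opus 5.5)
Your argument is correct and is the direct pointwise check the paper has in mind when it calls the lemma immediate; the paper gives no further proof. Invariance follows from $\ZZ^n+z=\ZZ^n$, and the valuation identity from inclusion--exclusion for the finite sets $(K-v)\cap\ZZ^n$ and $(L-v)\cap\ZZ^n$. One harmless slip: $\ZZ^n+v$ is not a finite set but a locally finite one, which is all you need, since its intersection with any compact set is finite.
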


	Recall from the introduction that, as a consequence of \autoref{lem:tCountVal}, we can apply McMullen's theorem (\cite{McMullen1977}*{Thm.~5}) to obtain the decomposition~\eqref{eq:homDecompTcount} into homogeneous components $\tcount_{P,i}$, $i=0, \dots, n$.
	
	Since $\tcount_{P}$ is piecewise constant with respect to the toric arrangement $\hyp_P$ associated to $P$, it follows from \eqref{eq:homDecompTcount} that $t_{P,i}$ is also piecewise constant with respect to $\hyp_P$.
	
	\begin{proposition}
		\label{prop:tpiEigenfct}
		Let $P\in\polys(\ZZ^n)$ and $i \in \{0, \dots, n\}$. Then $\tcount_{P,i} \in E_i(\hyp_P)$.
	\end{proposition}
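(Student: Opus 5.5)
The plan is to first prove the identity for the full translation count $\tcount_P$ under dilation, and then extract the homogeneous components by comparing coefficients of polynomials. Note that, with the normalisation $\kubert_m f = \frac{1}{m^n}\sum_{mu=v} f(u)$, the eigenvalue condition $\kubert_m f = m^{i-n} f$ defining $E_i(\hyp)$ is equivalent to the functional equation \eqref{eq:funct_eq}:
\[
\sum_{u\,:\,mu=v} f(u) = m^i f(v).
\]
Membership of $\tcount_{P,i}$ in $\pc(\hyp_P)$ was already observed right before the statement. So only this functional equation remains to be shown.

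\emph{Step 1: the dilation identity.} For every $K\in\K(\RR^n)$, $m\in\ZZ_{>0}$ and $v\in\RR^n$, I will show
\[
\sum_{u\,:\,mu=v}\tcount_K(u) = \tcount_{mK}(v).
\]
This is \eqref{eq:tcount_dilation}, shifted by $v$. Represent the solutions $u$ by $\tfrac1m(v+z)$ with $z\in\{0,\dots,m-1\}^n$. Then
\[
\tcount_K\big(\tfrac1m(v+z)\big) = \big|(mK-v-z)\cap m\ZZ^n\big| = \big|\{w\in (mK-v)\cap\ZZ^n : w\equiv z \bmod m\}\big|.
\]
As $z$ runs over a full residue system of $\ZZ^n/m\ZZ^n$, each lattice point $w$ of $mK-v$ is counted exactly once. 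This gives the identity. It is the only step with real content, and it amounts to a bijection between lattice points and residue classes, so I expect no obstacle there.

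\emph{Step 2: extracting homogeneous components.} Apply Step 1 to $K=kP$ for an arbitrary $k\in\ZZ_{\geq 0}$, which is again a lattice polytope. This gives
\[
\sum_{mu=v}\tcount_{kP}(u)=\tcount_{mkP}(v).
\]
Insert the decomposition \eqref{eq:homDecompTcount} on both sides: the left side becomes $\sum_i k^i\sum_{mu=v}\tcount_{P,i}(u)$, and the right side becomes $\sum_i (mk)^i\,\tcount_{P,i}(v)$. For fixed $m$ and $v$, both sides are polynomials in $k$ of degree at most $n$ that agree for all $k\in\ZZ_{\geq 0}$. Hence their coefficients agree, which yields
\[
\sum_{mu=v}\tcount_{P,i}(u)=m^i\,\tcount_{P,i}(v)
\]
for every $i$. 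Dividing by $m^n$ gives $\kubert_m\tcount_{P,i}=m^{i-n}\tcount_{P,i}$ for all $m$, i.e.\ $\tcount_{P,i}\in E_i(\hyp_P)$.
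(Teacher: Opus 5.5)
Your proof is correct and follows the paper's argument. The paper likewise proves $\tcount_{mP}(v)=\sum_{z\in\{0,\dots,m-1\}^n}\tcount_P(\tfrac1m(v+z))$ by rescaling, applies this to a dilate $\ell P$, inserts the homogeneous decomposition on both sides, and compares coefficients of $\ell^i$; your residue-class bijection only makes the rescaling step explicit.
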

	
	\begin{proof}
		Let $m \in \ZZ_{>0}$ and $v \in \torus{n}$. Then
		\begin{align}\label{eq:tmPEigenfct}
			\begin{split}
				\tcount_{mP}(v) &= |(mP-v)\cap \ZZ^n| \\
				&= \left|\left(P - m^{-1}v\right)\cap \left(m^{-1} \ZZ^n\right)\right| = \sum_{z \in \{0,...,m-1\}^n} \tcount_{P}\left(\tfrac 1m(v+z)\right).
			\end{split}
		\end{align}
		Combining \eqref{eq:tmPEigenfct} with the homogeneous decomposition of $\tcount_P$ (see \eqref{eq:homDecompTcount}), we obtain for $\ell \in \ZZ_{> 0}$
		\begin{align*}
				\sum_{i=0}^n (\ell m)^i\, \tcount_{P,i}(v) = \tcount_{\ell m P}(v) 
				&= \sum_{z \in \{0,...,m-1\}^n} \tcount_{\ell P}\left(\tfrac 1m(v+z)\right)\\
				&= \sum_{z \in \{0,...,m-1\}^n}\sum_{i=0}^n \ell^i\, \tcount_{P,i}\left(\tfrac 1m (v+z)\right).
		\end{align*}
		We extract the coefficient of $\ell^i$ on both sides to find
		\[
		m^i\, \tcount_{P,i}(v) = \sum_{u : mu=v} \tcount_{P,i}(u) = m^n\, \kubert_m\tcount_{P,i}(v).\qedhere
		\]
	\end{proof}
	
	Next, we want to express the translation count of the relative interior of a lattice polytope $P \in \polys(\ZZ^n)$ by the homogeneous components of its translation count (resp.\ the translation count of its reflection $-P$). For this, we will need McMullen's reciprocity theorem (\cite{McMullen1977}*{Thm.~12}):
	\begin{theorem}[\cite{McMullen1977}*{Thm.~12}]
		\label{thm:McMrecipr}
		Let $A$ be an Abelian group and suppose that $\varphi : \polys(\ZZ^n) \to A$ is a valuation that is invariant under lattice translations. If $\varphi$ is homogeneous of degree $i \in \{0, \dots, n\}$, then
		\begin{align*}
			\varphi(-P) = (-1)^i \sum_F (-1)^{\dim F} \varphi(F), \quad P \in \polys(\ZZ^n),
		\end{align*}
		where the sum runs over all non-empty faces of $P$.
	\end{theorem}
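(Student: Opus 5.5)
The plan is to derive the formula from a polynomial reciprocity law for lattice dilates, in the spirit of Ehrhart--Macdonald reciprocity. Homogeneity is used only at the very end.

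\textbf{Step 1: extend $\varphi$ to relatively open lattice polytopes.} Using inclusion--exclusion over the face lattice (Groemer's extension theorem, or simply as a definition), set
\[
\varphi(\relint P) := \sum_{F} (-1)^{\dim P-\dim F}\varphi(F),
\]
where the sum runs over the nonempty faces of $P$. This extension is again a lattice-translation-invariant valuation on the algebra generated by lattice polytopes and their relative interiors. Möbius inversion on the face lattice, which is Eulerian, shows that $\sum_F(-1)^{\dim F}\varphi(F) = (-1)^{\dim P}\varphi(\relint P)$. Hence the claim is equivalent to
\[
\varphi(-P) = (-1)^{i+\dim P}\,\varphi(\relint P).
\]

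\textbf{Step 2: reciprocity for dilates.} Define $p_P(m)=\varphi(mP)$ for $m\in\ZZ_{\geq 0}$; by McMullen's polynomiality this is a polynomial in $m$. The key claim is
\[
p_P(-m) = (-1)^{\dim P}\,\varphi\bigl(\relint(m(-P))\bigr)\quad\text{for all } m\in\ZZ_{>0}.
\]
Granting it, take $m=1$ and use homogeneity, $p_P(-1)=(-1)^i\varphi(P)$. This gives $\varphi(P)=(-1)^{i+\dim P}\varphi(\relint(-P))$, and replacing $P$ by $-P$ yields the equivalent form from Step 1.

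\textbf{Step 3: reduction of the key claim to simplices.} Triangulate $P$ into lattice simplices. Both sides of the key claim are additive with respect to the decomposition of $\relint P$ into relatively open simplices, on which the valuation property is available. It therefore suffices to prove the key claim for a single $d$-dimensional lattice simplex $S=\operatorname{conv}(v_0,\dots,v_d)$.

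\textbf{Step 4: the simplex case.} Pass to the cone over $S\times\{1\}$ with generators $w_j=(v_j,1)$. Tile it by lattice translates $\sum_j k_jw_j$, $k_j\in\ZZ_{\geq0}$, of the half-open fundamental parallelepiped. Slicing the parallelepiped at heights $h=0,\dots,d$, and further by which facets are included, gives finitely many bounded pieces $\Pi_{h,J}$. One then gets
\[
mS=\bigsqcup_{h,J}\ \bigsqcup_{\substack{k\in\ZZ_{\ge0}^{d+1},\ \sum k_j=m-h}} \Bigl(\textstyle\sum_j k_jv_j+\Pi_{h,J}\Bigr).
\]
By translation invariance, $p_S(m)=\sum_{h}\binom{m-h+d}{d}\varphi(\Pi_h)$, with $\varphi$ extended to these half-open pieces via Step 1. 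The analogous decomposition of $\relint(mS)$ uses the complementary half-open parallelepiped, which corresponds to $-S$ after the reflection $x\mapsto -x$. It gives $\varphi(\relint(m(-S)))=\sum_h\binom{m+h-1}{d}\varphi(\Pi_h)$. The key claim then follows from the elementary identity $\binom{-m-h+d}{d}=(-1)^d\binom{m+h-1}{d}$.

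I expect Step 4 to be the main obstacle. The difficulty is not the binomial algebra but setting up the half-open decompositions consistently, so that the closed dilate and the open dilate of the reflected simplex use exactly matching pieces. This is needed because $\varphi$ is only translation invariant and not defined pointwise like a counting function. My first attempt would be to fix a generic direction and use the ``visible/invisible facets'' half-open convention, which makes the decomposition of $mS$ and of $\relint(-mS)$ literally complementary.
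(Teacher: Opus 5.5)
The paper does not prove this statement; it quotes it from McMullen and uses it as a black box. So your argument has to stand on its own.

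\textbf{What works.} The overall architecture is sound: the dilation reciprocity $p_P(-m)=(-1)^{\dim P}\varphi(\relint(-mP))$ at $m=1$, combined with homogeneity, gives exactly the claimed identity. The simplex computation in Step 4 is also correct as written, so the obstacle you anticipate there is not real.
\begin{itemize}
\item The closed slices $\{\sum_j\lambda_jv_j : \lambda\in[0,1]^{d+1},\ \sum_j\lambda_j=h\}$ are affine images of hypersimplices, hence lattice polytopes, so $\varphi(\Pi_h)$ makes sense.
\item The substitution $\lambda_j\mapsto 1-\lambda_j$ shows that the height-$h'$ slice of the complementary parallelepiped for $-S$ is exactly $-\sum_j v_j+\Pi_{d+1-h'}$. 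The pieces therefore match exactly, and the binomial identity finishes the simplex case.
\item This computation also yields polynomiality. It gives a meaning to evaluation at $-m$ in an arbitrary Abelian group, since the coefficients $\binom{m-h+d}{d}$ are integer valued.
\end{itemize}

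\textbf{The gap is in Step 3.} A triangulation writes $mP$ as the disjoint union of the relatively open cells of \emph{all} dimensions, including those in $\bd P$. By contrast, $\relint(-mP)$ contains only the interior cells. So the two sides of the key claim do not decompose over the same pieces, and the key claim for closed $d$-simplices does not add up to the key claim for $P$. Already for $P=S_1\cup S_2$ glued along a facet $F$, you need the key claim for $F$ as well; in general you need a cancellation of all boundary contributions. You need one more ingredient, and either of the following works.
\begin{itemize}
\item[(a)] Prove the key claim for lattice simplices of every dimension (Step 4 already does this). Convert it by Möbius inversion on the face lattice of $S$ into $q_S(-m)=(-1)^{\dim S}\varphi(-mS)$, where $q_S(m)=\varphi(\relint(mS))$. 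Sum over all cells of the triangulation, and invoke the Euler-characteristic identity $\sum_{S}(-1)^{\dim S}\eins_S=(-1)^{\dim P}\eins_{\relint P}$. This identity holds because links of interior cells are spheres and links of boundary cells are balls.
\item[(b)] Use your visible/invisible convention here rather than inside a single simplex. For a generic $q\in\relint P$, $P$ is the disjoint union of the half-open simplices $H_j$ obtained from the maximal $S_j$ by deleting the facets visible from $q$. Likewise, $\relint P$ is the disjoint union of the complementary $H_j^{c}$, obtained by deleting the non-visible facets. Run Step 4 with the half-open parallelepiped: use $(0,1]$ in the coordinates of deleted facets and $[0,1)$ in the others. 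The same substitution $\lambda\mapsto 1-\lambda$ swaps the two conventions and gives $\varphi(mH_j)|_{m\mapsto -m}=(-1)^d\varphi(-mH_j^{c})$. Summing over $j$ yields the key claim for $P$.
\end{itemize}

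\textbf{A smaller point about Step 1.} The extension of $\varphi$ is not covered by Groemer's extension theorem, because $\polys(\ZZ^n)$ is not closed under intersections. Your argument needs additivity over non-convex decompositions (triangulations, half-open tilings). For that you need McMullen's result that valuations on lattice polytopes satisfy the inclusion--exclusion principle, and hence extend linearly to the group generated by indicator functions of lattice polytopes. With these two repairs the proof is complete.
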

	Using a standard inclusion-exclusion argument, we deduce:
	\begin{lemma}\label{lem:tCountOnRelint}
		Suppose that $P \in \polys(\ZZ^n)$. Then
		\begin{align*}
			\tcount_{\relint P} = (-1)^{\dim P}\sum_{i=0}^{n}(-1)^{i} \tcount_{-P,i}.
		\end{align*}
	\end{lemma}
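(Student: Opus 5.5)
We apply \autoref{thm:McMrecipr} to the valuation $\tcount_{\cdot,i}$. Then we use inclusion--exclusion over faces to pass from $\tcount$ of closed faces to $\tcount$ of the relative interior.

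\textbf{Step 1: the valuations $\varphi_i$.} For each $i\in\{0,\dots,n\}$, consider
\[
\varphi_i\colon \polys(\ZZ^n)\to\functions(\torus{n}),\qquad P\mapsto \tcount_{P,i}.
\]
Since $P\mapsto\tcount_P$ is a $\ZZ^n$-translation invariant valuation (\autoref{lem:tCountVal}), so is $P\mapsto \tcount_{mP}$ for each $m$. The coefficients $\tcount_{P,i}$ are obtained from finitely many values $\tcount_{mP}$ by Lagrange interpolation in $m$, so each $\varphi_i$ is again a translation invariant valuation. By construction it is homogeneous of degree $i$, because $\tcount_{mP,i}=m^i\tcount_{P,i}$. \autoref{thm:McMrecipr} therefore gives, for every lattice polytope $Q$,
\[
\tcount_{-Q,i}=(-1)^i\sum_{F}(-1)^{\dim F}\tcount_{F,i},
\]
where the sum runs over the nonempty faces $F$ of $Q$.

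\textbf{Step 2: summing over $i$.} Apply Step 1 with $Q=P$ and sum over $i$ with signs $(-1)^i$:
\[
\sum_{i=0}^n(-1)^i\tcount_{-P,i}=\sum_F(-1)^{\dim F}\sum_{i}\tcount_{F,i}=\sum_F(-1)^{\dim F}\tcount_F.
\]
Here we set $m=1$ in \eqref{eq:homDecompTcount} applied to each face $F$, which is itself a lattice polytope, so that $\sum_i\tcount_{F,i}=\tcount_F$.

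\textbf{Step 3: inclusion--exclusion.} It remains to show
\[
\sum_F(-1)^{\dim F}\tcount_F=(-1)^{\dim P}\tcount_{\relint P}.
\]
Fix $v$ and a lattice point $x\in P-v$. Let $G$ be the unique face of $P$ with $x+v\in\relint G$. The point $x$ is counted in $\tcount_F(v)$ exactly when $F\supseteq G$. Its total contribution to the left-hand side is therefore $\sum_{F\supseteq G}(-1)^{\dim F}$. The faces containing $G$ form the face lattice of a polytope, namely the polytope dual to the face figure $P/G$. By Euler's relation for that interval, this alternating sum equals $(-1)^{\dim P}$ if $G=P$ and $0$ otherwise. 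Explicitly, it is $\sum_{F\supseteq G}(-1)^{\dim F}=(-1)^{\dim P}\delta_{G,P}$, which is the standard identity $\mu(G,P)$ for Eulerian face lattices. Summing over $x$ gives exactly $(-1)^{\dim P}|(\relint P-v)\cap\ZZ^n|$.

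This Euler-characteristic step is the only point requiring care. The remaining subtlety is Step 1, namely justifying that $\varphi_i$ is a valuation, which the interpolation argument handles. The case $P=\emptyset$ is excluded, and for a single point ($\dim P=0$) the identity is immediate from \eqref{eq:extr_degrees}.
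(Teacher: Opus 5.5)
Your proposal is correct and follows the paper's proof. Both arguments use inclusion--exclusion over the nonempty faces of $P$, apply McMullen's reciprocity theorem to each homogeneous component $P\mapsto\tcount_{P,i}$, and then resum via $\tcount_F=\sum_i\tcount_{F,i}$. You make explicit two points the paper leaves implicit: that each $\tcount_{\cdot,i}$ is itself a lattice-translation-invariant valuation (your interpolation argument), and the Euler-relation justification of the inclusion--exclusion. One small correction there: the interval of faces containing $G$ is isomorphic to the face lattice of the face figure $P/G$ itself, not its dual, though the alternating sum vanishes either way.
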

	\begin{proof}
		By inclusion-exclusion,
		\begin{align*}
			\tcount_{\relint P}(v) = |(\relint P - v) \cap \ZZ^n| = \sum_F (-1)^{\dim P - \dim F} |(F-v) \cap \ZZ^n|,
		\end{align*}
		where the sum runs over all non-empty faces of $P$ (including $P$ itself). As, by \autoref{lem:tCountVal}, $P \mapsto \tcount_P(v)$ is a valuation that is invariant under lattice translations, \eqref{eq:homDecompTcount} and \autoref{thm:McMrecipr} yield
		\begin{align*}
			\sum_F (-1)^{\dim F} \tcount_{F}(v) = \sum_{i=0}^{n}\sum_F (-1)^{\dim F} \tcount_{F,i}(v) = \sum_{i=0}^{n} (-1)^i \tcount_{-P,i}(v),
		\end{align*}
		and the claim follows.
	\end{proof}
	
	\begin{proof}[Proof of \autoref{mthm:eihyp}]
		In view of \autoref{prop:tpiEigenfct}, the inclusion
		\[
		E_i(\hyp) \supseteq \spann\{\tcount_{P,i} : P\in\polys(\ZZ^n;\hyp)\}
		\]
		is clear. For the converse, we associate a rational polytope $P_C \subset \RR^n$ to every relatively open cell $C\in\hypcells$ such that $\relint(P_C)$ projects injectively to $C$. Let $D\in\NN$ such that $D P_C$ is a lattice polytope for every $C\in\hypcells$ (note that there are only finitely many cells, and that their preimages are rational). Then, $DP_C\in\polys(\ZZ^n;\hyp)$. \autoref{lemma:cells} together with \autoref{lem:tCountOnRelint} yield
		\[
		\kubert_{mD,0} \eins_C = \tcount_{\relint (mDP_C)} = (-1)^{\dim C}\sum_{i=0}^n (-m)^i \tcount_{-DP_C,i}.
		\]
		Now let $f = \sum_{C\in\hypcells} n_C \eins_C\in E_i(\hyp)$. Then, $\kubert_{mD,0}f = (mD)^i f$, for every $m\in \ZZ_{> 0}$. Moreover, with the above equation,
		\[
		(mD)^i f = \sum_{C\in\hypcells} n_C \kubert_{mD,0}\eins_C = \sum_{C\in\hypcells} (-1)^{\dim C} \sum_{i=0}^n (-m)^i n_C \tcount_{-DP_C,i}.
		\]
		Extracting the coefficient of $m^i$, one finds that $f$ is a linear combination of the functions $\tcount_{-DP_C,i}\in\pc(\hyp)$, indexed by $C\in\hypcells$.
	\end{proof}
	
	As a corollary of \autoref{mthm:eihyp}, we immediately obtain (see \eqref{eq:extr_degrees})
	\begin{align}\label{eq:charE0}
		E_0(\hyp) = \spann\{\eins_{\ZZ^n}\}.
	\end{align}
	
	\subsection{Descriptions of $E_{n-1}(\hyp)$}
	
	Next, we want to describe $E_{n-1}(\hyp)$, that is, prove \autoref{mthm:descrTdm1}. Before giving the proof, let us mention the following observation:
	
	\begin{lemma}\label{lem:eigenjumps}
		Let $f \in E_i(\hyp)$ and $v \in \RR^n$. Then $\Delta_v f \in E_i(\hyp)$.
	\end{lemma}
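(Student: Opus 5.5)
The plan is to show that the jump operator $\Delta_v$ commutes with every Kubert--Lang operator $\kubert_m$ on $\pc(\hyp)$, and that $\Delta_v$ maps $\pc(\hyp)$ into itself. Then the eigenvalue equation $\kubert_m f = m^{i-n} f$, which holds for all $m\in\ZZ_{>0}$, passes directly to $\Delta_v f$.

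\emph{Step 1: $\Delta_v f\in\pc(\hyp)$.} Let $C\in\hypcells$ and $x\in C$. For $\varepsilon>0$ small, the point $x+\varepsilon v$ lies in a cell $C'(x)$. Locally around $x$ we only need to consider the subtori $T_{a_j}$ with $C\subseteq T_{a_j}$. For such $j$, the point $x+\varepsilon v$ lies on $T_{a_j}$, on the positive side of it, or on the negative side of it according to the sign of $\pair{a_j}{v}$. The hyperplanes not containing $C$ stay away from $x$ for small $\varepsilon$. Hence $C'(x)$ is the unique cell that has $C$ in its closure and realizes this sign pattern. This description depends only on $C$ and $v$, so $C'(x)=C'$ is the same for all $x\in C$, with $C$ connected. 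Consequently $\Delta_v f(x)=f|_C-f|_{C'}$ is constant on $C$. The same remark shows that $\varepsilon$ can be chosen uniformly for any finite set of points, which we use in Step 2.

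\emph{Step 2: commutation.} Fix $m$ and $x\in\torus{n}$, and set $g_\varepsilon(y)=f(y+\varepsilon v)$. Then
\[
\sum_{u\colon mu=x} g_\varepsilon(u)=\sum_{u\colon mu=x} f(u+\varepsilon v)=\sum_{u'\colon mu'=x+m\varepsilon v} f(u'),
\]
since $u\mapsto u+\varepsilon v$ is a bijection between the solution sets of $mu=x$ and $mu'=x+m\varepsilon v$. Therefore $\kubert_m g_\varepsilon(x)=\kubert_m f(x+m\varepsilon v)$.

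The sum defining $\kubert_m$ is finite, with $m^n$ terms, and each term $f(u+\varepsilon v)$ is eventually constant as $\varepsilon\searrow 0$ by Step 1. So we may interchange the limit with the sum and obtain
\[
\kubert_m(\Delta_v f)(x)=\kubert_m f(x)-\lim_{\varepsilon\searrow0}\kubert_m f(x+m\varepsilon v)=\Delta_v(\kubert_m f)(x).
\]
In the last equality we reparametrize $\varepsilon'=m\varepsilon\searrow 0$. The limit exists because $\kubert_m f$ is again piecewise constant.

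\emph{Step 3: conclusion.} Since $\Delta_v$ is linear, Step 2 and the eigenvalue equation give $\kubert_m\Delta_v f=\Delta_v\kubert_m f=m^{i-n}\Delta_v f$ for every $m\in\ZZ_{>0}$. Together with Step 1 this shows $\Delta_v f\in E_i(\hyp)$.

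The only genuinely delicate point is Step 1, namely that the jump is constant on each relatively open cell. This uses that $\hyp$ is a finite central arrangement, so that locally near a cell the arrangement looks like a product of a central hyperplane arrangement with the cell itself. Everything else is a formal change of variables.
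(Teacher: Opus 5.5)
Your proof is correct and is essentially the paper's argument. The paper also expands $f(x)$ and $f(x+\varepsilon v)$ via the eigen-equation as finite sums over the $m$-division points and passes the limit $\varepsilon\searrow 0$ through the sum, using $\tfrac1m(x+\varepsilon v+z)=\tfrac1m(x+z)+\tfrac{\varepsilon}{m}v$; this is exactly your commutation $\kubert_m\Delta_v=\Delta_v\kubert_m$. Your Step 1, which checks that $\Delta_v f$ is again constant on the cells of $\hyp$, covers a point the paper leaves implicit, so your version is slightly more complete.
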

	\begin{proof}
		Let $x \in \torus{n}$ and fix $m \in \ZZ_{>0}$. Then
		\begin{align*}
			\Delta_v f(x) &= \lim_{\varepsilon\searrow 0} (f(x) - f(x + \varepsilon v))\\
			& =  \tfrac 1 {m^{i}} \sum_{z\in \{0,...,m-1\}^n} \lim_{\varepsilon\searrow 0} f(\tfrac 1m (x+z) ) - f(\tfrac 1m (x+\varepsilon v + z))\\
			& = \tfrac 1 {m^{i}} \sum_{z\in \{0,...,m-1\}^n} \Delta_v f(\tfrac{1}{m}(x+z)) = \kubert_{m,i}(\Delta_v f)(x).\qedhere
		\end{align*}
	\end{proof}
	
	We are now ready to prove \autoref{mthm:descrTdm1}. The first proof we give is independent of the valuation theory of lattice polytopes. It works directly with the structure of functions in $\pc(\hyp)$.
	
	\begin{proof}[Proof of \autoref{mthm:descrTdm1}]
		By Lemma \ref{lemma:riemann}\eqref{it:lemRiemannInt0}, we only need to show Items \ref{it:lin2} and \ref{it:lin_balance}. To this end, let $x \in \torus{n}$ and $v \in \RR^n$ be arbitrary, and define
		\begin{align*}
			I = \left\{ i \in \{1, \dots, 2N\}\colon \pair{a_i}{x} \in \ZZ, \pair{a_i}{v} > 0 \right\},
		\end{align*}
		where we recall that $a_1, \dots, a_{2N} \in (\ZZ^n)^\star$ are the (primitive) normal vectors of the central toric arrangement $\hyp$ (both signs included). Formulated differently, $i \in I$ if and only if $x \in T_{a_i}$ and $\pair{a_i}{v}> 0$. Let us further point out that if $a_i = - a_j$, then, by the second condition, not both $i$ and $j$ can be contained in $I$.
		
		Fix $m \in \ZZ_{>0}$. By \autoref{lem:eigenjumps}, $\Delta_v f \in E_{n-1}(\hyp)$ and therefore
		\begin{align}\label{eq:prfTdm1KubJump}
			\Delta_v f(x) = \kubert_{m,n-1}(\Delta_v f)(x) = \tfrac 1 {m^{n-1}} \sum_{x': mx' = x} \Delta_v f(x').
		\end{align}
		Suppose that $x' \in \torus{n}$ is a solution to $mx' = x$. If $x' \in T_{a_i}$ for some $i$, then, by \autoref{lemma:subgroups}, $x \in T_{a_i}$ as well. Hence, if $x'\not\in \bigcup_{i\in I} T_{a_i}$, then $x' \in T_{a_j}$ only for $\pair{a_j}{v} \leq 0$. As $T_{a_j} = T_{-a_j}$, this can happen only if $\pair{a_j}{v} = 0$.
		
		For such $x'$, we have $\Delta_v f(x') = 0$. Indeed, $\pair{a_j}{v} = 0$ for all $j$ where $x' \in T_{a_j}$ implies that $x' + \epsilon v$ is contained in the same cell of $\hypcells$ for $\epsilon > 0$ small enough. As $f\in\pc(\hyp)$ is constant on these cells, $\Delta_v f(x') = 0$. If $x' \notin T_{a_j}$ for any $j \in \{1, \dots, 2N\}$, then $x'$ is contained in the interior of a full-dimensional cell of $\hypcells$. Hence, $\Delta_v f(x') = 0$ also in this case.
		
		Writing $T_J$ for $\bigcap_{j\in J} T_{a_j}$, by inclusion-exclusion of $\bigcup_{i\in I} T_{a_i}$, \eqref{eq:prfTdm1KubJump} therefore becomes
		\begin{align}\label{eq:prfTdm1InclExcl}
		\Delta_v f(x) = \tfrac 1 {m^{n-1}} \sum_{\emptyset \neq J\subseteq I} (-1)^{|J|-1}  \sum_{x' \in T_J\colon mx'=x} \Delta_v f(x').
		\end{align}
		For a set $J\subseteq I$ of cardinality at least $2$, the corresponding vectors $a_j$, $j\in J$, generate a linear space of dimension at least $2$, since $\{a_i\colon i \in I\}$ does not contain pairs $a_i = -a_j$. Hence, by \autoref{lemma:more_subgroups}, $T_J$ decomposes into connected components $C_1,\dots, C_k$ which are isomorphic to tori of dimension at most $n-2$. Thus,
		\[
		\lim_{m\to\infty} \tfrac 1 {m^{n-1}} \sum_{x'\in T_J : mx' = x} \Delta_vf(x') = \sum_{i=1}^k \lim_{m\to\infty} \tfrac 1 {m^{n-1}} \sum_{x'\in C_i : mx' = x}  \Delta_vf(x')= 0,
		\]
		where the last equation follows from the second part of \autoref{lemma:more_subgroups},
		\begin{align*}
			|\{x' \in C_i: mx'=x\}| \leq m^{n-2},
		\end{align*}
		and the fact that $|\Delta_v f|$ is uniformly bounded as $f$ attains only finitely many values on $\torus{n}$.
		Thus, using \autoref{lemma:riemann}, \eqref{eq:prfTdm1InclExcl} implies
		\[
		\Delta_v f(x) = \lim_{m\to\infty} \tfrac 1 {m^{n-1}} \sum_{i\in I} \sum_{x'\in T_{a_i} : mx' = x} \Delta_v f(x') = \sum_{i\in I} \int_{T_{a_i}} \Delta_v f(y)\,\mathrm d_{T_{a_i}} y,
		\]
		which proves Item~\ref{it:lin2} with $\delta_i = \int_{T_{a_i}} \Delta_v f(y)\,\mathrm d_{T_{a_i}} y$.
		
		For Item~\ref{it:lin_balance}, let $z\in\ZZ^n$ be primitive with $\pair{a_i}{z} \neq 0$ for all $i \in \{1, \dots, 2m\}$, and let $v\in\RR^n$ be such that the segment $[v, v+z]$ avoids all cells of the periodic arrangement corresponding to $\hyp$ whose codimension is more than $1$ and that $v$ is contained in a full-dimensional cell. Note that this is a generic property, that is, we can choose almost every $v \in \RR^n$. In particular, such a $v$ exists. Consider the function
		\[
		g \colon [0,1] \to \RR,\quad g(\lambda) = (f\circ \pi)( v + \lambda z)
		\]
		Clearly, we have $g(0) = g(1)$. Whenever the segment $[v,v+z]$ crosses a hyperplane $H$ of the arrangement, the value of $g$ jumps up by $\delta_i$ and then drops down by $\delta_j$, where $a_i$ and $a_j$ are the two primitive normals of $H$ with $\pair{a_i}{z} < 0 < \pair{a_j}{z}$. The number of hyperplanes with normals $a_i$ and $a_j$ that $[v,v+z]$ crosses is $|\pair{a_i}{z}| = |\pair{a_j}{z}|$.
		This yields
		\[
		g(0) = g(1) = g(0) + \sum_{i=1}^{2N} \delta_i \pair{a_i}{z},
		\]
		that is, $\pair{\sum_{i=1}^{2N} \delta_i a_i}{z} = 0$. As we can choose $n$ linearly independent $z \in \ZZ^n$, Item~\ref{it:lin_balance} follows, concluding the proof.
	\end{proof}
	
We sketch another proof of \autoref{mthm:descrTdm1} using \autoref{mthm:eihyp}.	
	
	\begin{proof}[Alternative proof of \autoref{mthm:descrTdm1} using \autoref{mthm:eihyp}]
		As Items~\ref{it:lin2}, \ref{it:lin_balance}, and \ref{it:lin3} are linear in $f$, by \autoref{mthm:eihyp} it is sufficient to prove them for $\tcount_{P,n-1}$, where $P \in \polys(\ZZ^n;\hyp)$.
		
		Denote by $F$ the face of $P$ with outer normal $a_k$, and let $x\in T_{a_k} \setminus \bigcup_{a_j\neq\pm a_k} T_{a_j}$. Then, by \autoref{lemma:facet_count}, we have $\tcount_{mF}(x) = \Delta_{u}\tcount_{mP}(x)$ for any $u \in \RR^n$ with $\pair{a_k}{u} > 0$. Hence, using the homogeneous decomposition (see \eqref{eq:homDecompTcount}), we obtain for any $m\in\NN$
		\begin{align}\label{eq:altProofDescrTdm1}
		\sum_{i=0}^{n-1} m^i \tcount_{F,i}(x) = \tcount_{mF}(x) = \Delta_{u}\tcount_{mP}(x) = \sum_{i=0}^{n-1} m^i \Delta_{u}\tcount_{P,i}(x),
		\end{align}
		where we used that $\tcount_{P,n} \equiv \vol(P)$ (by \eqref{eq:charEn} and \autoref{prop:tpiEigenfct}), and thus $\Delta_{u} \tcount_{P,n}(x) = 0$. Extracting the leading term on both sides gives $\Delta_{u}\tcount_{P,n-1}(x) = \tcount_{F,n-1}(x)$. As $\tcount_F|_{T_{a_k}}$ coincides with $\tcount_F^{\ZZ^n \cap a^\perp}$, the translation count with respect to the lattice in $a^\perp$, the same argument shows that $\tcount_F$ is constant on $T_{a_k}$ (indeed, it is equal to the lattice normalized volume of $F$ in $a^\perp$). Hence $\Delta_{u}\tcount_{P, n-1}$ is constant on the relatively open cells of $\hyp$ in $T_{a_k}$. As the jumps are equal to the facet volumes, \autoref{thm:Minkowski} shows Item~\ref{it:lin_balance}. 
		
		Item~\ref{it:lin2} can be derived with a similar limit argument to the proof above, where one neglects contributions from lower dimensional faces to the jumps of the dilated polytope. Item~\ref{it:lin3} follows again from \autoref{lemma:riemann}.
	\end{proof}
	
	We now turn to the proof of \autoref{mtc:willsFortpnm1} and a sufficient condition for strictness in \eqref{eq:Willstpnm1}.
	
	\begin{proof}[Proof of \autoref{mtc:willsFortpnm1}]
%
		%
		Similar to the proof of \autoref{mthm:descrTdm1}~\ref{it:lin_balance}, we consider the function
		\begin{align*}
			g \colon [0,1] \to \RR,\quad g(\lambda) = \tcount_{P,n-1}( \lambda v).
		\end{align*}
		By iterating \autoref{mthm:descrTdm1}~\ref{it:lin2}, we obtain
		\begin{align*}
			g(0) - g(\lambda) = \sum_{i=1}^{2N}\delta_i \lceil \pair{a_i}{\lambda v} \rceil, \quad \lambda \in [0,1),
		\end{align*}
		where $a_i$ are the facet normals of $P$, which define $\hyp_P$, and  $\delta_i \in \RR$ are the jumps from \autoref{mthm:descrTdm1}~\ref{it:lin2} (applied to $E_{n-1}(\hyp_P)$), and $\lceil z \rceil = -\lfloor |z|\rfloor$ for negative $z \in \RR$. Note that, by \autoref{lemma:facet_count}, the jumps $\delta_i$ are the facet volumes of $P$:
\[
\delta_i = \int_{T_{a_i}} \Delta_{v_i} \tcount_{P,n-1}(y)\,\mathrm d_{T_{a_i}} y = \int_{T_{a_i}} \tcount_{P_{a_i},n-1}(y)\,\mathrm d_{T_{a_i}} y = \vol(P_{a_i}),
\]
where the volume is normalized by the determinant of the lattice $\ZZ^n\cap a_i^\perp$. In particular, $\delta_i \geq 0$ holds.

		By \autoref{mthm:descrTdm1}~\ref{it:lin_balance}, $\sum_i \delta_i a_i = 0$, and thus
		\begin{align}
			g(0) - g(\lambda) &= \sum_{i=1}^{2N}\delta_i \lceil \pair{a_i}{\lambda v} \rceil - \pair{\sum_{i=1}^{2N} \delta_i a_i}{v}\nonumber\\	
			&=\sum_{i=1}^{2N}\delta_i \left(\lceil \pair{a_i}{\lambda v} \rceil - \pair{a_i}{v}\right) \geq 0.\label{eq:prfWillsFtpnm1Ineq}
		\end{align}
		This shows that $\tcount_{P,n-1}(0) - \tcount_{P,n-1}(\lambda v) \geq 0$ for all $v \in \RR^n \setminus \ZZ^n$, that is, \eqref{eq:Willstpnm1} follows. The characterization of the equality case is also apparent in \eqref{eq:prfWillsFtpnm1Ineq}.
		
		For the second claim note that, by the homogeneous decomposition \eqref{eq:homDecompTcount},
		\begin{align*}
			\tcount_{mP}(0) - \tcount_{mP}(v) = \sum_{i=0}^{n}m^i (\tcount_{P,i}(0) - \tcount_{P,i}(v)),
		\end{align*}
		where the coefficient of $m^n$ vanishes by \eqref{eq:extr_degrees}.	Hence, if the inequality in \eqref{eq:Willstpnm1} is strict, then $\tcount_{mP}(0) - \tcount_{mP}(v)$ is a polynomial of degree $n-1$ in $m$ with positive leading coefficient. Since $\tcount_{mP}$ is piecewise constant with respect to the toric arrangement $\hyp_{mP} = \hyp_P$, we encounter only finitely many polynomials this way. As each such polynomial becomes positive as $m \to \infty$, the second claim follows.
	\end{proof}
	
	\section{Image of the planar translation count}\label{sec:planar}
	
	In this section, we turn to the $2$-dimensional case, proving \autoref{mthm:descrT1inR2} and \autoref{mcor:tpR2}.
	
	\begin{proof}[Proof of \autoref{mthm:descrT1inR2}]
		Assume that $f \in \pc(\hyp)$ satisfies the conditions in \autoref{mthm:descrTdm1} with $\delta_i \in \ZZ_{\geq 0}$ for all $i$. By \ref{it:lin_balance}, the vectors $\tfrac{a_i}{|a_i|}$ and the scalars $|a_i| \delta_i \in \ZZ_{\geq 0}$ satisfy the conditions of \autoref{thm:Minkowski}. Hence, there exists a polygon $P \in \polys(\RR^2)$ with outer unit normals $a_i$ and Euclidean edge lengths $|a_i| \delta_i$. We may assume that $P$ has a vertex at the origin. Since the determinant of the rank-1 lattice $\ZZ^2\cap a_i^\perp$ is $|a_i|$, the lattice normalized length of each edge of $P$ is integral. Translating $P$ so that the origin is a vertex of $P$, we may assume that $P$ is a lattice polygon.
		
		Further, we claim that $f = \tcount_{P,1}$. Indeed, by \autoref{lemma:facet_count} and \autoref{prop:tcInR1}, $\tcount_P$ has jumps $\Delta_u \tcount_P$, $\pair{a_i}{u} > 0$, that are constantly equal to the lattice normalized facet lengths $\tfrac{\vol_1(P_{a_i})}{|a_i|}$ on the relatively open cells of $\hypcells$ parallel to the corresponding subtorus $T_{a_i}$. As $\vol_1(P_{a_i}) = \delta_i |a_i|$ by construction, we see that $\Delta_u \tcount_P = \delta_i = \Delta_u f$ on the relatively open cells. By the characterization of $\tcount_{P,0} = \eins_{\ZZ^2}$ and $\tcount_{P,2} \equiv \vol_2(P)$ (see \eqref{eq:extr_degrees}), we conclude that the jumps of $\tcount_{P,1} = \tcount_P - \eins_{\ZZ^2} - \vol_2(P)\eins$ and $f$ coincide as well: $\Delta_u \tcount_{P,1} = \delta_i = \Delta_u f$. Hence, since both functions have vanishing integral (for $f$ by assumption~\ref{it:lin3}, for $\tcount_{P,1}$ by \autoref{prop:tpiEigenfct} and \autoref{lemma:riemann}), they must coincide, proving the claim.
	\end{proof}
	\begin{remark}
		Since the area of any lattice polygon is in $\tfrac 12 \ZZ$, the proof of \autoref{mthm:descrT1inR2} shows that $f(x) \in \tfrac{1}{2}\ZZ$.
	\end{remark}
	
	\autoref{mcor:tpR2} follows now easily from \autoref{mthm:descrT1inR2}, once we know how to reconstruct the $2$-homogeneous (volume-)term from $\tcount_{P,1}$. As we work only in $\RR^2$, this is a direct calculation:
	
	\begin{lemma}\label{lem:volR2ByFacets}
		Let $P \in \polys(\ZZ^2)$ be a lattice polygon with primitive outer normals $a_i, \dots, a_{2N} \in (\ZZ^2)^\star$ (both signs included). If we denote by $\delta_i \in \ZZ_{\geq 0}$, $i \in \{1, \dots, 2N\}$ the jumps of $\tcount_P$ according to \autoref{mthm:descrTdm1}, then
		\begin{align*}
			\vol_2(P) = \frac{1}{2}\sum_{1\leq i < j < 2N} \delta_i\delta_j \det(a_i,a_j).
		\end{align*}
	\end{lemma}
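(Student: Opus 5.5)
The plan is to identify the jumps $\delta_i$ with the lattice lengths of the edges of $P$, and then apply the shoelace formula to the edge vectors. Throughout, I assume the normals $a_1,\dots,a_{2N}$ are indexed in counterclockwise angular order, starting from some fixed direction. This is the convention under which the stated formula can hold; I read the upper bound $j<2N$ as $j\le 2N$. Summands with $\delta_i=0$, i.e.\ normals of $\hyp$ that are not edge normals of $P$, contribute nothing.

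First I will recall, as in the proof of \autoref{mtc:willsFortpnm1}, that \autoref{lemma:facet_count} combined with \autoref{prop:tcInR1} gives
\[
\delta_i=\frac{\vol_1(P_{a_i})}{|a_i|},
\]
the lattice-normalized length of the edge $P_{a_i}$. This holds because $\det(\ZZ^2\cap a_i^\perp)=|a_i|$, and $\delta_i=0$ when $a_i$ is not an edge normal. Let $J$ denote rotation by $\tfrac\pi2$ counterclockwise. Then $Ja_i$ is a primitive lattice vector spanning $a_i^\perp$. Traversing $\bd P$ counterclockwise, the edge with outer normal $a_i$ has edge vector $e_i=\delta_i Ja_i$, since its Euclidean length is $\delta_i|a_i|$. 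As the outer normals turn counterclockwise, the edges appear in the same cyclic order.

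Next, I translate $P$ so that the vertex between the last and the first edge (in the chosen angular order) is the origin. The vertices are then $p_k=\sum_{l<k}e_l$ for $k=1,\dots,2N$, and $\sum_l e_l=0$ by \autoref{mthm:descrTdm1}\ref{it:lin_balance}. The shoelace formula gives
\[
\vol_2(P)=\tfrac12\sum_k\det(p_k,p_{k+1})=\tfrac12\sum_k\det\Big(\sum_{l<k}e_l,\,e_k\Big)=\tfrac12\sum_{l<k}\det(e_l,e_k).
\]
Since $\det J=1$, we have $\det(e_l,e_k)=\delta_l\delta_k\det(a_l,a_k)$, which yields the claim.

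I expect the main obstacle to be the bookkeeping: I must check that the angular ordering of the normals matches the counterclockwise order of the edges, and that the sign of each determinant is correct. This makes the ordering hypothesis on the indices essential, and I would state it explicitly. Degenerate cases, where $P$ is a segment or a point, give zero on both sides, since all the $e_i$ are then parallel.
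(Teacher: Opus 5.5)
Your proposal is correct and follows essentially the same route as the paper. The paper also identifies $\delta_i|a_i|=\vol_1(P_{a_i})$, orders the normals counterclockwise with a vertex of $P$ at the origin between the first and last edges, and takes the edge vectors to be the rescaled $\tfrac{\pi}{2}$-rotations of the $a_i$. It then computes the area via the fan triangulation from that vertex, which is exactly your shoelace computation $\tfrac12\sum_{l<k}\det(e_l,e_k)$; your reading of the upper bound as $j\le 2N$ and your explicit counterclockwise-ordering hypothesis match what the paper's proof actually uses.
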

	Note that $\delta_i |a_i| = \vol_1(P_{a_i})$ here (see the proof of \autoref{mthm:descrT1inR2}).
	\begin{figure}
		\usetikzlibrary{calc}

\begin{tikzpicture}[scale=1.1, line join=round, >=stealth]
	
	\coordinate (v0) at (0,0);
	\coordinate (v1) at (3,0);
	\coordinate (v2) at (4,1);
	\coordinate (v3) at (3,3);
	\coordinate (v4) at (1,4);
	\coordinate (v5) at (-1,2);
	
	
	\draw[black, dashed] (v0)--(v2);
	\draw[black, dashed] (v0)--(v3);
	\draw[black, dashed] (v0)--(v4);
	
	\draw[very thick]
	(v0)--(v1)--(v2)--(v3)--(v4)--(v5)--cycle;
	
	\draw[->, thick, red!75!black]
	($(v0)$) -- ($(v1)$)
	node[midway, below] {$u_1$};
	
	\draw[->, thick, red!75!black]
	($(v1)$) -- ($(v2)$)
	node[midway, right] {$u_2$};
	
	\draw[->, thick, red!75!black]
	($(v2)$) -- ($(v3)$)
	node[midway, right] {$u_3$};
	
	\draw[->, thick, red!75!black]
	($(v3)$) -- ($(v4)$)
	node[midway, above] {$u_4$};
	
	\draw[->, thick, red!75!black]
	($(v4)$) -- ($(v5)$)
	node[midway, left] {$u_5$};
	
	\draw[->, thick, red!75!black]
	($(v5)$) -- ($(v0)$)
	node[midway, left] {$u_6$};
	
	\fill (v0) circle (1.7pt);
	\fill (v1) circle (1.7pt);
	\fill (v2) circle (1.7pt);
	\fill (v3) circle (1.7pt);
	\fill (v4) circle (1.7pt);
	\fill (v5) circle (1.7pt);
	
	\node[below right, font=\small] at (v0) {$v_0=0$};
	\node[below, font=\small]       at (v1) {$v_1=u_1$};
	\node[right, font=\small]       at (v2) {$v_2=u_1+u_2$};
	\node[right, font=\small]       at (v3) {$v_3=u_1+u_2+u_3$};
	\node[above, font=\small]       at (v4) {$v_4=u_1+\cdots+u_4$};
	\node[left, font=\small]        at (v5) {$v_5=u_1+\cdots+u_5$};
	
	\node[font=\large] at (1.6,1.75) {$P$};
	
\end{tikzpicture}
		\caption{Illustration of the triangulation from the proof of \autoref{lem:volR2ByFacets}.}
		\label{fig:volR2byFacets}
	\end{figure}
	\begin{proof}
		Denote by $u_i \in \ZZ^2$ the vector of length $\vol_1(P_{a_i})$ orthogonal to $a_i$, pointing counter-clockwise along the boundary of $P$, $i=1, \dots, 2N$. In view of $\sum_{i=1}^{2N}\delta_i a_i = 0$, we may assume without loss of generality that $P$ has one vertex at the origin and that the vectors $a_1, \dots, a_{2N}$ are sorted counter-clockwise with $a_1$ and $a_{2N}$ being the normals to the facets neighboring the origin. Then a triangulation of $P$ is given by
		\begin{align*}
			&\mathrm{conv}\{0, u_1, u_1+u_2\},\\
			&\mathrm{conv}\{0, u_1+u_2, u_1+u_2+u_3\}, \dots, \\
			&\mathrm{conv}\{0, \sum_{i=1}^{2N-1}u_i, \sum_{i=1}^{2N}u_i\},
		\end{align*}
		see \autoref{fig:volR2byFacets}, and the volume of $P$ can be calculated by
		\begin{align*}
			\vol(P) &= \sum_{j=2}^{2N} \vol\left(\mathrm{conv}\left\{0, \sum_{i=1}^{j-1}u_i, \sum_{i=1}^{j}u_i\right\}\right)\\
			&=\frac{1}{2}\sum_{j=2}^{2N} \det\left(\sum_{i=1}^{j-1}u_i, \sum_{i=1}^{j}u_i\right)\\
			&=\frac{1}{2}\sum_{j=2}^{2N}\sum_{i=1}^{j-1} \det\left(u_i, u_{j}\right) = \frac{1}{2}\sum_{1 \leq i < j \leq 2N} \det\left(u_i, u_{j}\right).
		\end{align*}
		Next, note that $u_i$ is obtained by rotating $a_i$ by $\frac{\pi}{2}$ counter-clockwise and rescaling such that
		\begin{align*}
			|u_i| = \vol_1(P_{a_i}) = |a_i| \delta_i.
		\end{align*}
		Consequently,
		\begin{align*}
			\det\left(u_i, u_{j}\right) = \delta_i\delta_j\det(a_i, a_j),
		\end{align*}
		concluding the proof.
	\end{proof}

	\begin{proof}[Proof of \autoref{mcor:tpR2}]
		The necessity of the conditions is clear from \eqref{eq:extr_degrees}, \autoref{mthm:descrTdm1} and \autoref{lem:volR2ByFacets}.
		
		For the sufficiency, note that the jumps in the 1-cells, $\delta_i$, are not changed by subtracting a constant function or $\eins_{\ZZ^2}$. By \autoref{mthm:descrT1inR2}, we therefore obtain a lattice polygon $P \in \polys(\ZZ^2)$ such that
		\begin{align*}
			\tcount_{P,1} = f - \eins_{\ZZ^2} - \frac{1}{2}\sum_{1\leq i < j < 2N} \delta_i\delta_j \det(a_i,a_j).
		\end{align*}
		The function $\tcount_P$ has the same jumps as $f$, and by \autoref{lem:volR2ByFacets}, the same integral. Hence, $f=\tcount_P$ holds as desired.
	\end{proof}
	
	We conclude the section with a curious observation that shows an additional symmetry property of the translation count in dimension $2$. It is also implied by the more general Corollary 5.4 in \cite{HigashitaniEtAl}.
	\begin{lemma}
		Let $P \in \polys(\ZZ^2)$ and choose $v \in \RR^2$. If $v$ is contained in a $2$-dimensional cell $C$ of $\hyp_P^\circ$ neighboring the origin, then
		\begin{align*}
			\frac{\tcount_P(v) + \tcount_P(-v)}{2} = \vol_2(P).
		\end{align*}
	\end{lemma}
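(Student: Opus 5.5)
We decompose $\tcount_P = \tcount_{P,0} + \tcount_{P,1} + \tcount_{P,2}$ using \eqref{eq:homDecompTcount} with $m=1$. By \eqref{eq:extr_degrees}, $\tcount_{P,2}\equiv \vol_2(P)$, and $\tcount_{P,0}=\eins_{\ZZ^2}$ vanishes at $\pm v$, since $v$ lies in an open $2$-cell and so $v\notin\ZZ^2$. The claim is therefore equivalent to
\[
\tcount_{P,1}(v) + \tcount_{P,1}(-v) = 0 .
\]

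To prove this we use the description of $\tcount_{P,1}\in E_1(\hyp_P)$ from \autoref{mthm:descrTdm1}, iterated along a segment exactly as in the proof of \autoref{mtc:willsFortpnm1}. We take $g(\lambda)=\tcount_{P,1}(\lambda v)$ for $\lambda\in[0,1]$ and apply \ref{it:lin2} repeatedly. Since $C$ is an open cell whose closure contains $0$, the open segment $(0,v]$ stays inside $C$ and crosses no hypersurface of the arrangement. More precisely, for every $i$ we have $0<|\pair{a_i}{\lambda v}|<1$ for $\lambda\in(0,1]$, because $C$ lies in a single cell of each one-hyperplane arrangement adjacent to $0$. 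Hence
\[
\tcount_{P,1}(0)-\tcount_{P,1}(v)=\Delta_v \tcount_{P,1}(0)=\sum_{i:\pair{a_i}{v}>0}\delta_i ,
\]
and similarly
\[
\tcount_{P,1}(0)-\tcount_{P,1}(-v)=\sum_{i:\pair{a_i}{v}<0}\delta_i .
\]
The point is that $-v$ lies in the open cell $-C$, which also neighbors the origin, so the same reasoning applies with direction $-v$. No normal satisfies $\pair{a_i}{v}=0$, because $v$ is in an open $2$-cell adjacent to $0$ and so avoids every line $a_i^\perp$.

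Adding the two identities gives
\[
2\tcount_{P,1}(0)-\tcount_{P,1}(v)-\tcount_{P,1}(-v)=\sum_{i=1}^{2N}\delta_i .
\]
It remains to show $2\tcount_{P,1}(0)=\sum_i\delta_i$. By the proof of \autoref{mtc:willsFortpnm1}, $\delta_i$ is the lattice length of the edge $P_{a_i}$, so $\sum_i\delta_i$ is the number of boundary lattice points of $P$. On the other hand, $\tcount_{P,1}(0)$ is the classical linear Ehrhart coefficient, which by Pick's theorem equals half the number of boundary lattice points. This gives the desired identity, and with it the claim. Alternatively, one can avoid Pick by invoking \autoref{lem:tCountOnRelint} at $v=0$ together with Ehrhart–Macdonald reciprocity.

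The main obstacle is justifying that $\lambda v$ crosses no line of the periodic arrangement for $\lambda\in(0,1]$, i.e.\ that the cell adjacent to $0$ lies in the strip $|\pair{a_i}{x}|<1$ for each $i$; this follows since each line $\pair{a_i}{x}=\pm1$ is itself part of the arrangement, so the open cell $C$ cannot cross it. One should also confirm that the jump formula \ref{it:lin2} at $x=0$ in direction $\pm v$ uses exactly the indicated $\delta_i$.
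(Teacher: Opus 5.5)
Your proof is correct, but it takes a different route from the paper.

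\textbf{The paper's route.} It never passes to the homogeneous components. It observes directly that $\tcount_P(0)-\tcount_P(\pm v)=\Delta_{\pm v}\tcount_P(0)$. By \autoref{lemma:tangentcones}, this counts the lattice points on the part $\bd_{\pm v}P$ of $\bd P$ carrying an outer normal $a$ with $\pair{a}{\pm v}>0$. Since $v$ is parallel to no edge, $\bd_vP\cup\bd_{-v}P=\bd P$, and the two pieces share exactly the two lattice vertices touched by the supporting lines parallel to $v$. Hence the two differences add up to $|\bd P\cap\ZZ^2|+2$, and Pick's theorem finishes the proof.

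\textbf{Your route.} You split off $\tcount_{P,0}=\eins_{\ZZ^2}$ and $\tcount_{P,2}=\vol_2(P)$ and prove the odd symmetry $\tcount_{P,1}(v)=-\tcount_{P,1}(-v)$. For this you use the jump formula \ref{it:lin2} of \autoref{mthm:descrTdm1} at $x=0$ in the directions $\pm v$, the identification of $\delta_i$ with lattice edge lengths from the proof of \autoref{mtc:willsFortpnm1}, and $2\tcount_{P,1}(0)=|\bd P\cap\ZZ^2|$. The double count of the two extreme vertices in the paper's argument is absorbed in yours by the constant term $\eins_{\ZZ^2}$: each of $\tcount_{P,0}(0)-\tcount_{P,0}(\pm v)$ equals $1$.

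\textbf{Comparison.} The paper's argument is shorter and self-contained, using only \autoref{lemma:tangentcones} and Pick, with no Kubert--Lang machinery. Yours relies on the heavier \autoref{mthm:descrTdm1}; there is no circularity, since it is proved earlier. You still need Pick or reciprocity for $\tcount_{P,1}(0)$. What your route buys is a clean separation between structure and arithmetic. The identity $2f(0)-f(v)-f(-v)=\sum_i\delta_i$ is purely structural and holds for every $f\in E_1(\hyp)$. Only the arithmetic input $2\tcount_{P,1}(0)=\sum_i\delta_i$ is specific to translation counts.

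Your geometric step also checks out. For a lift of $v$ in a lifted cell whose closure contains $0$, the segment $(0,v]$ lies in that convex open cell. So $\pair{a_i}{v}\neq 0$ for all $i$, and $\tcount_{P,1}$ is constant along both $(0,v]$ and $(0,-v]$. The paper uses the same fact implicitly.
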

	\begin{proof}
		Since $C$ is neighboring the origin, we have $\tcount_P(\pm v) = \tcount_P(\pm \varepsilon v)$ for all $\varepsilon > 0$. Consequently,
		\begin{align*}
			\Delta_{\pm v} \tcount_P(0) = \lim_{\varepsilon\searrow 0} \tcount_P(0) - \tcount_P(\pm \varepsilon v) = \tcount_P(0) - \tcount_P(\pm v).
		\end{align*}
		Noting that $\tcount_P(-v) = \tcount_{-P}(v)$, \autoref{lemma:tangentcones} implies that
		\begin{align*}
			\tcount_P(0) - \tcount_P(\pm v) = |\bd_{\pm v} P \cap \ZZ^2|,
		\end{align*}
		where we denote by $\bd_u P$ the positive boundary of $P$ with respect to a vector $u \in \RR^2 \setminus \{0\}$, that is, the set of all $x \in \bd P$ with outer normal $a \in (\RR^2)^\star$ such that $\pair{a}{u} > 0$. By our choice of $v$ (that is, since $v$ is not parallel to a facet of $P$), $\bd P = \bd_{v} P \cup \bd_{-v} P$ and the intersection consists of exactly two points. Therefore,
		\begin{align*}
			\frac{\tcount_P(v) + \tcount_P(-v)}{2} &= \tcount_P(0) - \frac{\Delta_v \tcount_P(0) + \Delta_{-v}\tcount_P(0)}{2}\\
			&=|P \cap \ZZ^2| - \tfrac{1}{2}\left(|\bd P \cap \ZZ^2| + 2\right) = \vol_2(P),
		\end{align*}
		where the last equality follows from Pick's theorem (see, e.g., \cite{Beck2015}*{Thm.~2.8}).
	\end{proof}

	\subsection*{Acknowledgments}
	The authors would like to thank the participants of polytope workshop at SageDays Vienna 2026, Nicolas Hain, Matthias M\"uller, Martin Rubey and Valentina Taylor Cerra, for helping us to produce good \texttt{sagemath} code to support computational experiments. 
We thank Christian Stump for providing us access to frontier LLMs through his platform \href{https://math.sciencebench.ai/}{\texttt{sciencebench.ai}}.
	\subsection*{AI-disclosure statement}
	For the proofs in Section \ref{sec:kubert_lang}, we profited from ``conversations'' with an LLM. We obtained somewhat convoluted responses to our prompts, that we could not fully parse, but that were very helpful to inspire a rigorous proof by the authors.
	While every line in this paper is handwritten, we used LLMs for generating the illustrations.

\end{document}